\numberwithin{equation}{section}
\let\Re=\undefined\DeclareMathOperator*{\Re}{Re}
\let\Im=\undefined\DeclareMathOperator*{\Im}{Im}
\newcommand{\R}{\mathbb{R}}
\newcommand{\C}{\mathbb{C}}
\newcommand{\eps}{\varepsilon}
\newtheorem{theorem}{Theorem}[section]
\newtheorem{lemma}[theorem]{Lemma}
\newtheorem{corollary}[theorem]{Corollary}
\newtheorem{proposition}[theorem]{Proposition}
\theoremstyle{definition}
\theoremstyle{remark}
\newcommand{\qtq}[1]{\quad\text{#1}\quad}
\begin{document}

\title[NLS with repulsive potentials]{Threshold scattering for the focusing NLS with a repulsive potential}

\author[Miao]{Changxing Miao}
\address{Institute for Applied Physics and Computational Mathematics, Beijing, China}
\email{miao\_changxing@iapcm.ac.cn}

\author[Murphy]{Jason Murphy} 
\address{Department of Mathematics \& Statistics, Missouri S\&T, USA}
\email{jason.murphy@mst.edu}

\author[Zheng]{Jiqiang Zheng}
\address{Institute for Applied Physics and Computational Mathematics, Beijing, China}
\email{zhengjiqiang@gmail.com}

\begin{abstract} We adapt the arguments in the recent work of Duyckaerts, Landoulsi, and Roudenko to establish a scattering result at the sharp threshold for the $3d$ focusing cubic NLS with a repulsive potential.  We treat both the case of short-range potentials as previously considered in the work of Hong, as well as the inverse-square potential, previously considered in the work of the authors. 
\end{abstract}

\maketitle


\section{Introduction}

We consider the focusing cubic nonlinear Schr\"odinger equation with an external potential in three space dimensions.  This equation takes the form
\begin{equation}\label{nls}\tag{$\text{NLS}_V$}
i\partial_t u = (-\Delta + V)u - |u|^2 u,
\end{equation}
where $u:\R\times\R^3\to\C$ is a complex-valued function and $V:\R^3\to\R$.  We focus our attention on nonnegative, repulsive potentials.  In particular, our results will address the following:
\begin{itemize}
\item[(i)] We first consider potentials $V:\R^3\to\R$ as in the work of Hong \cite{Hong}.  This firstly requires the specific decay/regularity conditions 
\begin{equation}\label{V1}
V\in L^{\frac32},\quad x\cdot\nabla V\in L^{\frac32},\qtq{and} \sup_{x\in\R^3}\int \frac{|V(y)|}{|x-y|}\,dy<\infty.
\end{equation}
In addition, we require that $V$ be nonnegative and repulsive, that is:
\begin{equation}\label{V2}
V\geq 0 \qtq{and}x\cdot \nabla V\leq 0.
\end{equation}
\item[(ii)] We next consider the case of a repulsive inverse-square potential, i.e.
\begin{equation}\label{V3}
V(x) = a|x|^{-2}\qtq{with}a>0,
\end{equation}
which we have studied in previous works \cite{KMVZ, KMVZZ, LMM}.  This represents a limiting case of the potentials considered in (i), in the sense that \eqref{V2} is satisfied but the decay requirements in \eqref{V1} barely fail.
\end{itemize}

The works \cite{Hong, KMVZ} considered the problem of finding the sharp scattering threshold for \eqref{nls} with external potentials as in (i) and (ii) above.  In fact, these works show that for nonnegative, repulsive potentials, one obtains scattering below the same threshold arising in the setting of the standard cubic NLS
\begin{equation}\label{nls0}\tag{$\text{NLS}_0$}
i\partial_t u = -\Delta u -|u|^2 u.
\end{equation}
To state the results precisely, we first introduce the conserved \emph{mass} and \emph{energy} of solutions to \eqref{nls}, given by
\[
M(u) = \tfrac12\int |u|^2\,dx \qtq{and} E_V(u) = \int \tfrac12|\nabla u|^2 + \tfrac12 V(x)|u|^2 - \tfrac14|u|^4\,dx,
\]
respectively.  Next, we denote by $Q$ the ground state soliton for ($\text{NLS}_0$), that is, the unique nonnegative, decaying, radial solution to
\begin{equation}\label{elliptic}
-Q+\Delta Q + Q^3 = 0.
\end{equation}
Finally, we define the Sobolev space adapted to 
\[
H:=-\Delta + V
\]
by
\[
\|u\|_{\dot H_V^1}^2 = \langle Hu,u\rangle = \int |\nabla u|^2 + V|u|^2\,dx.  
\]
The scattering results of \cite{Hong, KMVZ} may then be stated as follows:
\begin{theorem}[Sub-threshold scattering, \cite{Hong, KMVZ}]\label{T:sub} Let $V$ satisfy \eqref{V1}--\eqref{V2} or \eqref{V3}.  Suppose $u_0 \in H^1(\R^3)$ obeys
\begin{equation}\label{old-sub-threshold}
M(u_0)E_V(u_0)<M(Q)E_0(Q) \qtq{and} \|u_0\|_{L^2}\| u_0\|_{\dot H_V^1} < \|Q\|_{L^2} \|Q\|_{\dot H^1}. 
\end{equation}
Then the solution to \eqref{nls} with initial data $u_0$ is global in time and obeys 
\[
\|u\|_{L_{t,x}^5(\R\times\R^3)}<C\bigl(M(Q)E_0(Q)-M(u_0)E_V(u_0)\bigr)
\]
for some function $C:(0,M(Q)E_0(Q))\to(0,\infty)$.  Consequently, $u$ scatters in both time directions; that is, there exist $u_\pm\in H^1$ so that
\[
\lim_{t\to\pm\infty} \|u(t)-e^{-itH}u_\pm\|_{H^1} = 0. 
\] 
\end{theorem}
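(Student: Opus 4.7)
The plan is to follow the Kenig--Merle concentration-compactness and rigidity roadmap. The first step is variational: by the sharp Gagliardo--Nirenberg inequality
\[
\|f\|_{L^4}^4 \leq C_{GN}\,\|f\|_{L^2}\,\|\nabla f\|_{L^2}^3,
\]
together with the bounds $\|\nabla f\|_{L^2}\leq \|f\|_{\dot H_V^1}$ and $E_0(f)\leq E_V(f)$ (both forced by $V\geq 0$), one shows that the two hypotheses in \eqref{old-sub-threshold} persist in time and yield a quantitative coercivity estimate
\[
\|u(t)\|_{L^2}\,\|u(t)\|_{\dot H_V^1}\leq (1-\delta)\,\|Q\|_{L^2}\,\|Q\|_{\dot H^1}.
\]
Combined with the $H^1$-subcritical local theory for \eqref{nls}, this forces global existence in $H^1(\R^3)$ and keeps the kinetic energy uniformly bounded.

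Next, I would set up the Strichartz calculus for $e^{-itH}$ (available via the known dispersive/Kato-smoothing estimates for $H$ in both settings), prove a small-data scattering result, and establish a long-time perturbation lemma. Assuming for contradiction that scattering fails somewhere inside the sub-threshold region, one isolates a minimal non-scattering threshold; at this threshold, a linear profile decomposition adapted to $e^{-itH}$ produces a minimal non-scattering solution $u_c$ whose forward orbit is precompact in $H^1$ modulo the residual symmetries of the equation. Because $V$ breaks translation invariance, profiles whose spatial centers drift to infinity must be handled separately: one shows that in the limit they are governed by the free focusing cubic NLS and therefore already scatter below $M(Q)E_0(Q)$ by the corresponding threshold result for \eqref{nls0}. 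In case (ii) the surviving scaling symmetry additionally forces one to accommodate profiles concentrating at small scales, which likewise reduce to free NLS profiles in the limit.

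The rigidity step is a truncated virial/Morawetz argument with weight $a(x)\approx|x|^2$ on a ball of large radius. The principal contribution to $\tfrac{d^2}{dt^2}\int a|u|^2\,dx$ is
\[
8\int|\nabla u|^2\,dx\;-\;6\int|u|^4\,dx\;-\;4\int (x\cdot\nabla V)\,|u|^2\,dx,
\]
in which the potential term is \emph{nonnegative} by \eqref{V2} (and equals $8a\int|x|^{-2}|u|^2\,dx$ in case (ii)). The sub-threshold coercivity provides a uniform positive lower bound on the full expression, and precompactness of the orbit allows the truncation errors to be absorbed. Integrating in $t$ then forces a quantity that must remain bounded for a precompact orbit to grow without bound, yielding the contradiction that closes the proof.

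I expect the main obstacle to be the profile decomposition itself. The loss of translation invariance in case (i) and the additional scale-invariance in case (ii) force a genuine bifurcation in the extraction: profiles attached to the origin evolve under $e^{-itH}$, while profiles that escape to spatial infinity (or, in case (ii), concentrate at scales much finer than unity) must be compared on their own length scales with the free Schr\"odinger flow. Establishing the orthogonality of Strichartz norms across these inequivalent classes, and in particular verifying that a minimal threshold profile cannot be of the free-NLS type (since the free cubic NLS already scatters strictly below $M(Q)E_0(Q)$), is the most delicate part of the argument.
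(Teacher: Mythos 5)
The theorem is imported from \cite{Hong, KMVZ} rather than proven in this paper, so there is no in-house argument to compare against. That said, your outline does follow the same Kenig--Merle road map used in those references: coercivity via the sharp Gagliardo--Nirenberg inequality, a linear profile decomposition adapted to the $e^{-itH}$ Strichartz estimates (Proposition~\ref{P:LPD}), reduction of profiles escaping to spatial infinity to the translation-invariant cubic NLS (Proposition~\ref{P:embedding}), and a truncated virial argument in which the repulsiveness hypothesis $x\cdot\nabla V\leq 0$ supplies the favorable sign.

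One misconception worth correcting: you suggest that in case (ii) the scaling invariance of the inverse-square potential forces the profile decomposition to accommodate concentration at small scales. It does not. The decomposition is run on $H^1$-bounded sequences, and since the $L^2$ and $\dot H^1$ components of the norm scale in opposite directions, any nontrivial sequence of dilations would annihilate one of them and produce only the zero profile; accordingly, Proposition~\ref{P:LPD} carries only translation and time-shift parameters $(t_n^j,x_n^j)$, in both case (i) and case (ii). There is also an inaccuracy in how you imagine such hypothetical profiles behaving: because $|x|^{-2}$ is scale-invariant, a profile concentrating near the origin would still feel the full inverse-square potential, not the free equation; the reduction to \eqref{nls0} applies only to profiles whose spatial centers escape to infinity. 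The genuine case (ii)-specific difficulty lies elsewhere: constructing nonlinear profiles that asymptotically match escaping translates, as in Proposition~\ref{P:embedding}, cannot be done by the simple $L^{3/2}$ H\"older estimate that works in case (i) because $|x|^{-2}\notin L^{3/2}(\R^3)$; the argument in \cite{KMVZ} instead requires a low-frequency cutoff and careful spatial localization, as is revisited in the proof of Lemma~\ref{ximpliesdeltaimpliesx}.
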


In the result above, we may also include the case $V\equiv 0$, in which case we recover the results of \cite{HR, DHR}.

Theorem~\ref{T:sub} is sharp in the sense that the constant $C(\cdot)$ necessarily diverges as one approaches the threshold.  For \eqref{nls0}, this follows immediately from the existence of the solitary wave solution $u(t,x)=e^{it}Q(x)$.  For \eqref{nls}, one can derive this fact by considering a sequence of solutions constructed by approximations built from translations of the solitary wave solution. In particular, we have the following (see \cite[Theorem~1.5]{KMVZ}).

\begin{theorem}[Failure of uniform bounds at the threshold] Let $V$ satisfy \eqref{V1}--\eqref{V2} or \eqref{V3}. Then there exist global solutions $u_n$ to \eqref{nls} so that
\[
M(u_n)E_V(u_n)\nearrow M(Q)E_0(Q),\quad \|u_n(0)\|_{L^2}\|u_n(0)\|_{\dot H^1_V}\nearrow \|Q\|_{L^2}\|Q\|_{\dot H^1}, 
\] 
and
\[
\lim_{n\to\infty}\|u\|_{L_{t,x}^5(\R\times\R^3)}=\infty. 
\]
\end{theorem}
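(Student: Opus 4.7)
The plan is to take initial data consisting of rescaled translates of the ground state soliton, pushed out to spatial infinity where the potential becomes negligible. Fix $x_n\in\R^3$ with $|x_n|\to\infty$, and set
\[
u_{0,n}(x) := \lambda_n Q(x-x_n), \qquad \eps_n := \tfrac12\int V(x)\,Q(x-x_n)^2\,dx,
\]
with $\lambda_n\nearrow 1$ to be chosen. The quantity $\eps_n$ tends to zero in both settings: in case (i) since $Q(\cdot-x_n)^2\to 0$ in $L^3_{\mathrm{loc}}$ while $V\in L^{3/2}$, and in case (ii) since $|x|^{-2}\lesssim |x_n|^{-2}$ on the ball $|x-x_n|\le |x_n|/2$ where the profile is essentially supported. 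Because $V\ge 0$, the unscaled translate $Q(\cdot-x_n)$ exceeds both sub-threshold inequalities; the slight contraction $\lambda_n<1$ compensates. Using the Pohozaev identities $\|Q\|_{\dot H^1}^2 = 3\|Q\|_{L^2}^2$ and $\int Q^4 = 4\|Q\|_{L^2}^2$ for the cubic ground state in $\R^3$, a direct expansion yields
\[
M(u_{0,n})E_V(u_{0,n}) - M(Q)E_0(Q) = -3(1-\lambda_n^2)^2 M(Q)^2 + \lambda_n^4 M(Q)\eps_n + O\bigl((1-\lambda_n^2)^3\bigr),
\]
with a similar (easier) expansion for the $\|u_{0,n}\|_{L^2}\|u_{0,n}\|_{\dot H^1_V}$ defect. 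Choosing $1-\lambda_n^2 = c\sqrt{\eps_n}$ for a suitable absolute constant $c$ renders both defects negative with magnitude $\to 0$, so Theorem~\ref{T:sub} produces globally defined solutions $u_n$ to \eqref{nls} satisfying the required convergences at the top of the statement.

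To establish the blow-up of the scattering norms, I would compare $u_n$ with the pure NLS$_0$ soliton orbit $\tilde u_n(t,x) := e^{it}Q(x-x_n)$, which solves \eqref{nls0} exactly and hence solves \eqref{nls} with forcing $-V(x)\tilde u_n(t,x)$. A long-time stability/perturbation lemma in the Strichartz framework adapted to $H=-\Delta+V$ (analogous to the one underlying Theorem~\ref{T:sub} in \cite{Hong,KMVZ}) yields $\|u_n - \tilde u_n\|_{L^5_{t,x}([0,T_n]\times\R^3)}\to 0$, provided (a) $\|u_{0,n}-\tilde u_n(0)\|_{H^1} = (1-\lambda_n)\|Q\|_{H^1}\to 0$ and (b) the forcing term $\|V\tilde u_n\|_{N([0,T_n])}$ in the appropriate dual Strichartz norm also tends to zero, for a suitably chosen $T_n\to\infty$. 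Both reduce to showing $\|VQ(\cdot-x_n)\|_{H^1}\to 0$, which again follows from the integrability of $V$ in case (i) or the pointwise decay of $|x|^{-2}$ on the effective support of $Q(\cdot-x_n)$ in case (ii). The desired divergence then follows from
\[
\|\tilde u_n\|_{L^5_{t,x}([0,T_n]\times\R^3)}^5 = T_n\int Q^5\,dx \longrightarrow \infty.
\]

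The main technical obstacle is the stability step. Because $\tilde u_n$ is not a small Strichartz object on $[0,T_n]$ as $T_n\to\infty$, the small-data perturbation lemma cannot be applied in a single shot; one partitions $[0,T_n]$ into subintervals on which $\|\tilde u_n\|_{L^5_{t,x}}$ is bounded by a fixed small absolute constant, applies the small-error stability result on each, and iterates. The accumulated error must stay controlled despite the growing number of subintervals, so the rate at which $T_n$ may be taken to diverge is set by the trade-off between how fast $\|V\tilde u_n\|_{N}$ decays in $|x_n|$ and the number of iterations required. Case (ii) requires a little extra care because $V$ is singular at the origin, but since $Q(\cdot-x_n)$ is exponentially small there, the relevant norms still decay and the same outline goes through.
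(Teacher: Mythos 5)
The paper does not supply its own proof of this theorem; it cites \cite[Theorem~1.5]{KMVZ} and merely describes the idea as ``a sequence of solutions constructed by approximations built from translations of the solitary wave solution,'' which is precisely what you do. Your proposal is correct and follows essentially that same route: the rescaled translates $\lambda_n Q(\cdot-x_n)$ with $1-\lambda_n^2\sim\sqrt{\eps_n}$ sit strictly below both threshold quantities and approach them, Theorem~\ref{T:sub} gives global solutions, and the long-time stability iteration (with $T_n\to\infty$ slowly, of order $\log(1/\eps_n)$, to keep the geometrically accumulating error from each subinterval under control) forces the scattering norm to diverge.

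Two minor points worth being explicit about. First, your reduction of the forcing estimate in case (i) to $\|VQ(\cdot-x_n)\|_{H^1}\to 0$ is not quite the right quantity; what one actually bounds is a dual Strichartz norm like $\|V(\cdot+x_n)Q\|_{L^{6/5}}$, which tends to zero by splitting into a fixed ball and its complement and using translation of $V\in L^{3/2}$ plus the decay of $Q$ (the same computation appears in the paper's proof of Proposition~\ref{P:embedding}, cf.\ \eqref{444error}). Second, to obtain the literal $\nearrow$ in the statement one should also arrange $\eps_n$ and $\lambda_n$ monotonically (or pass to a subsequence); your construction makes this routine. Neither affects the substance.
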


In this work, we consider solutions at the sharp mass-energy threshold and establish the following scattering result. 
\begin{theorem}[Threshold scattering]\label{T} Let $V$ satisfy \eqref{V1}--\eqref{V2} or \eqref{V3}.  Suppose $u_0\in H^1(\R^3)$ satisfies
\begin{equation}\label{SUBTH}
M(u_0)E_V(u_0)=M(Q)E_0(Q)\qtq{and} \|u_0\|_{L^2}\|u_0\|_{\dot H_V^1}<\|Q\|_{L^2}\|Q\|_{\dot H^1}.
\end{equation}
Then the corresponding solution $u$ to \eqref{nls} is global, with $u\in L_{t,x}^5(\R\times\R^3)$.  In particular, $u$ scatters in $H^1$.
\end{theorem}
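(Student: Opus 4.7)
The plan is to argue by contradiction, following the concentration-compactness/rigidity scheme and adapting the threshold analysis of Duyckaerts--Landoulsi--Roudenko (henceforth DLR) for \eqref{nls0} to the potential setting. Suppose $u$ satisfies \eqref{SUBTH} but fails to scatter, say $\|u\|_{L^5_{t,x}([0,\infty)\times\R^3)}=\infty$. The strict gradient inequality in \eqref{SUBTH} is preserved by the flow and, combined with conservation of mass and $E_V$, yields uniform coercive $H^1$ control and global existence by the variational analysis already developed in \cite{Hong, KMVZ}. Applying the linear profile decomposition adapted to $e^{-itH}$ together with the nonlinear stability theory from those works, I would extract a minimal non-scattering solution $u_c$ whose forward orbit is precompact in $H^1$ modulo spatial translation: there is $x(t)$ with $\{u_c(t,\cdot-x(t))\}_{t\geq0}$ precompact in $H^1(\R^3)$. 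Since we are exactly at the mass-energy threshold and Theorem~\ref{T:sub} gives scattering strictly below it, the profile carrying $u_c$ is unique and saturates $M(Q)E_0(Q)$.

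The second step is to analyze the translation parameter $x(t)$. If $|x(t_n)|$ remained bounded along a sequence $t_n\to\infty$, then $u_c$ would stay localized where $V$ acts, and a localized Morawetz/virial identity in the style of \cite{Hong, KMVZ} -- whose coercivity derives from the strict gradient inequality rather than from a mass-energy gap -- would be incompatible with the compactness of $\{u_c(t_n)\}$. Hence $|x(t)|\to\infty$. Along any sequence with $|x(t_n)|\to\infty$, the translated potentials $V(\cdot+x(t_n))$ converge to zero on compact sets (for \eqref{V1} by $V\in L^{3/2}$ and H\"older, and for \eqref{V3} by direct estimation using the pointwise bound $a|x+x(t_n)|^{-2}$ against $|Q|^2$). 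By precompactness and nonlinear stability, one passes to a limit profile that generates a solution $v$ of \eqref{nls0} satisfying $M(v)E_0(v)=M(Q)E_0(Q)$ with compact orbit modulo translations in forward time. The DLR rigidity theorem then forces $v$ to equal, up to symmetries of \eqref{nls0}, either the standing wave $e^{it}Q$ or one of the special threshold solutions $Q^\pm$ built from the unstable mode of the linearized operator $\L$ around $Q$.

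The final step is to contradict the non-scattering of $u_c$. If $v=Q^\pm$ (up to symmetries), then $v$ scatters in one time direction; combining this with the long-time perturbation theory for \eqref{nls}, in which the potential term is treated as a small perturbation supported away from the escaping soliton, transports scattering back to $u_c$ itself, a contradiction. If $v=e^{it}Q$, I would perform modulation analysis, writing
\[
u_c(t,x)=e^{i\theta(t)}\bigl[Q(x-x(t))+r(t,x-x(t))\bigr]
\]
with $r$ orthogonal to the generalized kernel of $\L$. The DLR spectral picture (a single pair of real eigenvalues $\pm e_0$ plus symmetry null space) together with a virial coercivity on the orthogonal complement yields exponential decay of $r$ in one time direction; the instability in the unstable direction then forces $u_c$ to coincide with a translate of $Q^\pm$, which scatters -- contradicting the choice of $u_c$. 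The alternative possibility that $u_c$ is simply a translating copy of $e^{it}Q$ is excluded because, by repulsivity, $\|Q\|_{\dot H^1_V}>\|Q\|_{\dot H^1}$, so $Q$ itself as initial data for \eqref{nls} violates the strict gradient inequality of \eqref{SUBTH}.

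The main obstacle is executing the modulation analysis in the last step in the presence of the potential. The linearization around the translating soliton $Q(\cdot-x(t))$ under \eqref{nls} differs from the \eqref{nls0} linearization by the potential term $V$, which, although asymptotically negligible on the support of $Q$ (since $|x(t)|\to\infty$), produces forcing terms in the evolution equations for $(\theta(t),x(t),r)$ that must be controlled quantitatively so as not to overwhelm the spectral gap $e_0$. For the inverse-square case \eqref{V3}, whose scaling matches that of $-\Delta$, this requires extra care: the pointwise bound $V(\cdot+x(t))\lesssim|x(t)|^{-2}$ on the support of $Q$ and its derivatives has to be converted into decay of the relevant modulation error terms fast enough to close the exponential-decay scheme of DLR. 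Establishing these quantitative bounds -- together with the corresponding stable/unstable manifold construction near the moving soliton for \eqref{nls} -- is the technical heart of the proof.
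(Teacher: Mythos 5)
Your overall skeleton (reduce to a compact, nonscattering, threshold solution with spatial center $x(t)$, then analyze $x(t)$) matches the paper's, but the mechanism you propose for ruling out such a solution is substantially different from the one in the paper, and it has several genuine gaps.

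First, your claim that if $|x(t_n)|$ stayed bounded then the localized virial would be ``incompatible with compactness'' (because coercivity follows from the strict gradient inequality) does not hold at the threshold. The strict inequality $\|u(t)\|_{\dot H^1_V}<\|Q\|_{\dot H^1}$ gives $\delta(t)>0$ but provides \emph{no quantitative lower bound}; indeed the whole difficulty of the threshold case is that $\delta(t)$ may approach zero. In the paper (Section~\ref{BtoUB}) the localized virial argument, when $x(t)$ is bounded, only produces a sequence $t_n$ along which $\delta(t_n)\to0$, which (via the modulation lemmas) forces $|x(t_n)|\to\infty$, contradicting boundedness. This is not the same as a direct Morawetz-style contradiction, and it does not yield $|x(t)|\to\infty$ for all large $t$---only that $x(t)$ is unbounded, which is weaker and insufficient for your subsequent step.

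Second, your extraction of a solution $v$ to \eqref{nls0} ``with compact orbit modulo translations in forward time'' along an escaping sequence $|x(t_n)|\to\infty$ is not justified. Precompactness of $\{u_c(t,\cdot+x(t))\}$ and stability give strong $H^1$ convergence $u_c(t_n,\cdot+x(t_n))\to v_0$ and locally-uniform-in-time approximation of $u_c$ by the NLS$_0$ evolution of $v_0$ on compact time intervals, but this does not transfer compactness of $u_c$ to a compactness statement for the full trajectory of $v$. The paper never makes such a claim; in Lemma~\ref{ximpliesdeltaimpliesx} it uses only the threshold classification of \cite{DR} as a black box to conclude that \emph{if} $\delta(t_n)\geq c>0$ along the escaping sequence, then $v$ scatters in at least one time direction, and then long-time perturbation transfers $L^5_{t,x}$ bounds back to $u$, a contradiction. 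The output is not a classification of $v$ but the implication $|x(t_n)|\to\infty\Rightarrow\delta(t_n)\to0$.

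Third, and most importantly, your proposed endgame---modulation around the moving soliton plus a DLR-style exponential-decay/stable-manifold argument to force $u_c$ to be a translate of $Q^\pm$---is not what is done here, and it is unclear that it can be made to work: the potential is not exponentially small on the soliton's support (for \eqref{V3} it is only polynomially small, of order $|x(t)|^{-2}$), which is exactly the obstruction you flag, and which the paper circumvents entirely. The paper's replacement for this step is a \emph{modulated virial} inequality (Lemma~\ref{modulated-virial}), in which one ``adds zero'' in the form $F_R^0[e^{i\theta}Q(\cdot-y)]-F_\infty^0[e^{i\theta}Q(\cdot-y)]$ (Corollary~\ref{C:virial}), so that the truncation errors on the set $\{\delta(t)<\delta_1\}$ become differences $G(u)-G(e^{i\theta}Q(\cdot-y))$ that are $O(\eps\,\delta(t))$ by the modulation bounds $\|g\|_{H^1}\sim\delta$. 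Combined with the bound $|\dot y|\lesssim\delta$ (Proposition~\ref{P:modulation}) and a local-constancy argument, this yields $\int_{t_1}^{t_2}\delta\,dt\lesssim_{\delta_1}\bigl(1+\sup_{[t_1,t_2]}|x|\bigr)(\delta(t_1)+\delta(t_2))$ and $|x(t_2)-x(t_1)|\lesssim\int_{t_1}^{t_2}\delta\,dt+O(1)$, from which the conclusion ``$x$ unbounded $\Rightarrow$ $x$ bounded'' follows by choosing $t_n$ with $\delta(t_n)$ small. This ``$x(t)$ controls itself'' self-referential bound, rather than a classification of limit profiles plus a spectral stable-manifold construction, is the key missing idea in your write-up.
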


To put this result in context, we first recall the work of \cite{DR} on threshold solutions for the standard cubic equation \eqref{nls0}.  In this work, all possible behaviors of solutions with initial data obeying \eqref{SUBTH} (with $V\equiv0$) are classified.  In contrast to Theorem~\ref{T}, solutions do not necessarily scatter in both time directions.  Instead, solutions may converge to another special solution $Q^{-}$ to $(\text{NLS}_0)$ as $t\to\infty$ or $t\to-\infty$.  (In fact, this is only one part of the main result in \cite{DR}, but we focus here on the aspects of \cite{DR} most closely related to the present work.) 

We next recall the works of \cite{KVZ, DLR}, which considered the cubic NLS in the exterior of a convex obstacle.  In \cite{KVZ}, the authors established sub-threshold scattering (the analogue of Theorem~\ref{T:sub} above), with the sharp threshold again given by \eqref{SUBTH}.  The recent work \cite{DLR} subsequently established a scattering result at the sharp mass-energy threshold in this setting (as in Theorem~\ref{T}).  In particular, our result is an analogue of the result of \cite{DLR} in the setting of NLS with non-negative, repulsive potentials.  Accordingly, our methods are inspired by the arguments given in that work, which are in turn related to works such as \cite{DR, DM}. 

Finally, let us mention the related work \cite{YZZ}, which considered the classification of dynamics of threshold solutions for the energy-critical NLS with an inverse-square potential.  In this case, the authors restricted attention to the case of an \emph{attractive} potential and established results in the spirit of \cite{DM}.

In the rest of the introduction, we will describe the main ideas of the proof of Theorem~\ref{T} and provide an outline of the structure of the paper.  As mentioned above, the proof follows a similar strategy to the ones appearing in \cite{DLR, DR, DM}

The proof of Theorem~\ref{T} is by contradiction.  The first step is Proposition~\ref{P:compact}, in which we show that if the theorem fails, then we may find a forward-global, $H^1$-bounded, `compact' solution at the threshold with infinite $L_{t,x}^5$ norm on $[0,\infty)$.  More precisely, the orbit of this solution is pre-compact in $H^1$ modulo some time-dependent spatial center $x(t)$.  The proof of Proposition~\ref{P:compact} essentially follows the standard path of concentration-compactness.  The key to obtaining compactness is to prevent `dichotomy', which we do primarily by appealing to the sub-threshold scattering theory, i.e. Theorem~\ref{T:sub}. 

The key to obtaining a contradiction is now to analyze the possible behavior of $x(t)$, or, more precisely, to show that no behavior for $x(t)$ is possible!  Indeed, we show that if $x(t)$ is bounded, then it must be unbounded (Section~\ref{BtoUB}); and if $x(t)$ is unbounded, then it must be bounded (Section~\ref{UBtoB}).  We thus obtain a contradiction and complete the proof of Theorem~\ref{T}.

The analysis of $x(t)$ relies primarily on suitable virial arguments and involves the introduction of the related quantity 
\[
\delta(u(t)) := \int |\nabla Q|^2\,dx - \int |\nabla u|^2 + V|u|^2\,dx,
\]
which, in particular, is dominated by the quantity that arises in the virial identity.  Consequently, if we assume $x(t)$ remains bounded, then the standard localized virial argument implies that $\delta(u(t_n))\to 0$ along some sequence $t_n\to\infty$.  By the sharp Gagliardo--Nirenberg inequality, this forces $u$ to approach the orbit of $Q$ and the potential part of the energy to vanish, which in turn forces $|x(t_n)|\to\infty$. That is, if $x(t)$ is bounded, then $x(t)$ is unbounded.

To prove the reverse implication is more involved.  To begin, we need the converse statement that $|x(t_n)|\to\infty$ implies $\delta(u(t_n))\to0$.  This relies on an approximation argument akin to \cite[Theorem~6.1]{KMVZ}, which shows that \eqref{nls} is well-approximated by the standard cubic equation \eqref{nls0} in the regime $|x|\to\infty$, together with the threshold classification result of \cite{DR}.  Basically, these two results together show that if $|x(t_n)|\to\infty$ but $\delta(u(t_n))\geq c>0$, then the solution cannot blow up its $L_{t,x}^5$-norm. 

Next, we analyze the behavior of the solution in the regime $\delta(u(t))\ll 1$.  As above, we find that the solution approaches the orbit of $Q$.  In particular, we can carry out a `modulation analysis' and obtain a description of the solution of the form
\begin{equation}\label{intro-modulation}
u(t,x) = e^{i\theta(t)}[Q(x-y(t))+g(t,x)],
\end{equation}
where (by choosing the modulation parameters correctly) we can obtain bounds over quantities like $\|g\|_{H^1}$, $|\dot y|$, and the potential part of the energy all in terms of $\delta(u(t))$.  This part of the analysis is essentially independent of the rest of the proof and is relegated to the final section of the paper, Section~\ref{S:modulation}.  While it seems at this moment that one solution is being parametrized by two spatial centers (i.e. $x(t)$ and $y(t)$), we can show that $|x(t)-y(t)|=\mathcal{O}(1)$ and hence simply re-define $x(t)=y(t)$ wherever $\delta(u(t))$ is small. 

The final step is now to run another virial argument in order to gain control over integrals over the form $\int_I \delta(u(t))\,dt$.  We can then use this quantity to control the variation of $x(t)$ on $I$.  Thus, if we can show that this quantity is controlled by a small multiple of $\sup_I |x(t)|$, then we can prove that `$x(t)$ controls itself' and hence remains bounded.   (Altogether, we obtain that if $x(t)$ is unbounded, then $x(t)$ is bounded.)

This time the virial argument is more subtle, due to the fact that we must control error terms in terms of $\delta(u(t))$ itself.   To achieve this, we split the interval into times where $\delta(u(t))$ is small versus where it admits some lower bound.  For times where $\delta(u(t))$ is small, we must `add zero' in a very specific way that lets us exploit the modulation analysis and exhibit terms containing the error term `$g(t)$' in \eqref{intro-modulation} above.  In particular, because $e^{it}Q$ is a solution to \eqref{nls0}, quantities appearing within the virial identities can be seen to vanish when evaluated at $e^{i\theta}[Q(\cdot-y)]$ (see Section~\ref{S:virial}). 

Having sketched the technical aspects of the arguments above, we can also offer the following rough `dynamical' description:  As long as $x(t)$ remains bounded, the virial identity implies that $u(t)$ must move away from the origin (the weighted momentum is increasing).  However, once $u(t)$ moves far enough from the origin, it is drawn towards the orbit of $Q$, at which point it loses momentum and slows down, ultimately limiting the motion of $x(t)$.  The only conclusion is that this type of `compact' solution cannot exist.  

At various points in the paper, we need slightly different arguments to deal with potentials obeying \eqref{V1}--\eqref{V2} versus \eqref{V3}.  As we will see, some approximation arguments (in the regime $|x|\to\infty$) are greatly simplified for potentials obeying \eqref{V1}--\eqref{V2}.  For the virial arguments, the essential property needed is the repulsive assumption, which leads to a term with a good sign in the virial inequality.  However, it turns out that to run the `modulated' virial argument, the general case is much more of a headache than the inverse-square potential.  This is due to the fact that the inverse-square potential preserves the scaling symmetry, which is intimately linked to the virial identity.  In particular, for the inverse-square potential, the quantity arising in the virial identity coincides with the potential part of the energy, whereas in the general case it leads to a term that necessitates some additional estimates on the modulation parameter $|y(t)|$ and the use of explicit exponential estimates for the ground state $Q$.   The interested reader can find a bit more detail on this point in Section~\ref{S:conclusion}.

Finally, let us briefly mention that if one considers Theorem~\ref{T} with a radial assumption, then one obtains the condition $x(t)\equiv 0$ immediately and hence reaches a contradiction by the standard virial argument.  However, this tells us nothing new!  Indeed, it was already shown in \cite[Theorem~1.6]{KMVZ} that radial solutions to \eqref{nls} enjoy a strictly larger scattering threshold than the one appearing in \eqref{SUBTH}. 

The rest of this paper is organized as follows:
\begin{itemize}
\item Section~\ref{S:preliminaries} contains preliminary material.  This includes a discussion of the local theory for \eqref{nls}, stability theory, concentration-compactness results, a bit of variational analysis, and a section on virial identities. 
\item In Section~\ref{S:compact}, we prove Proposition~\ref{P:compact}, which shows that if Theorem~\ref{T} fails, then we may find a compact, nonscattering, threshold solution.  The rest of the paper is devoted to precluding the possibility of such a solution.
\item In Section~\ref{S:impossible}, we rule out the possibility of solutions as in Proposition~\ref{P:compact}, taking for granted the modulation analysis to be carried out in Section~\ref{S:modulation}. We begin in Section~\ref{S:DP} by introducing several `main characters' in the argument and spelling out the relationships between them.  In Section~\ref{BtoUB}, we show that if $x(t)$ is bounded, then it is unbounded; in Section~\ref{UBtoB}, we prove the converse.  In Section~\ref{S:conclusion}, we conclude the proof and make a few technical remarks.
\item Finally, in Section~\ref{S:modulation}, we carry out the modulation analysis, providing a description of the solution at the times when it is close the orbit of $Q$. 
\end{itemize}

\subsection*{Acknowledgments} C. Miao was supported by the National Key Research and Development Program  of China (No. 2020YFA0712900) and NSFC Grant 11831004.  J. Zheng was supported by NSFC Grant 11901041.

\section{Preliminaries}\label{S:preliminaries}
For functions depending only on $t$, we may use $\dot{}$ to denote $\tfrac{d}{dt}$.  We write $B_r(x_0)$ for the ball of radius $r$ centered at $x_0$.  We denote the standard $L^2$ inner product by
\[
\langle f,g\rangle = \int \bar f g\,dx.
\]
We employ the usual $L_t^q L_x^r$ notation for mixed Lebesgue space-time norms. 

We define
\[
\|f\|_{\dot H_V^1}^2 = \langle f,H f\rangle = \int_{\R^3} |\nabla f|^2 + V(x)|f|^2\,dx. 
\]

Under the assumptions \eqref{V1}--\eqref{V2} or \eqref{V3}, we have the following result concerning the equivalence of Sobolev spaces defined in terms of $-\Delta$ and those defined in terms of $H=-\Delta+V$ (see \cite[Lemma~2.6]{Hong} and \cite{KMVZZ2}).
\begin{lemma}[Equivalence of Sobolev spaces]\label{L:EOSS} Suppose $V$ satisfies \eqref{V1}--\eqref{V2} or \eqref{V3}.  Then for all $1<r<\tfrac{3}{s}$, we have
\[
\| H^{s/2}f\|_{L^r} \sim \| |\nabla|^s f\|_{L^r}. 
\]
\end{lemma}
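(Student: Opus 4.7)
The equivalence amounts to $L^r$-boundedness of the two composition operators $H^{s/2}(-\Delta)^{-s/2}$ and $(-\Delta)^{s/2}H^{-s/2}$ in the stated range. My plan is to deduce both bounds from Gaussian-type heat-kernel estimates on $e^{-tH}$ together with the subordination formula
\[
H^{s/2} f = \frac{1}{\Gamma(-s/2)} \int_0^\infty t^{-s/2 - 1} \bigl( e^{-tH} - I \bigr) f \, dt, \qquad 0 < s < 2,
\]
and its analogue for $-\Delta$. Composing these representations realizes $H^{s/2}(-\Delta)^{-s/2}$ as an integral operator whose kernel admits Calderón--Zygmund size and smoothness bounds; standard theory then yields $L^r$-boundedness, with the cutoff $r < 3/s$ entering through the Hardy--Sobolev step used to absorb $(-\Delta)^{-s/2}$ against the weights that appear near the diagonal. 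Duality and the same argument applied with the roles of $H$ and $-\Delta$ swapped handle the reverse direction.

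The two hypotheses enter only through the derivation of the heat-kernel bounds. For short-range $V$ as in \eqref{V1}--\eqref{V2}, the upper bound $|e^{-tH}(x,y)| \lesssim t^{-3/2} e^{-c|x-y|^2/t}$ is essentially free from the Feynman--Kac formula: since $V \geq 0$ the associated multiplicative weight is a contraction, giving $|e^{-tH}(x,y)| \leq e^{t\Delta}(x,y)$. For the matching two-sided bound (needed in the reverse direction) I would run a Duhamel/perturbation argument using $V \in L^{3/2}$ and the uniform Kato-type bound $\sup_x \int |V(y)|/|x-y|\,dy < \infty$; this is precisely why those conditions appear in \eqref{V1}, and the argument essentially reproduces \cite[Lemma~2.6]{Hong}. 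For the inverse-square potential \eqref{V3}, scaling forbids the naive Gaussian bound near the origin, so I would invoke the scale-invariant two-sided heat-kernel estimates
\[
e^{-tH}(x,y) \sim t^{-3/2}\Bigl(1\wedge\tfrac{\sqrt t}{|x|}\Bigr)^{\!\sigma}\Bigl(1\wedge\tfrac{\sqrt t}{|y|}\Bigr)^{\!\sigma} e^{-|x-y|^2/(ct)},
\]
with $\sigma = \sigma(a) > 0$ determined by the associated Bessel-type angular decomposition, and then carry out the Calderón--Zygmund bookkeeping in their presence, as in \cite{KMVZZ2}.

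The main obstacle is the sharpness of the range $r < 3/s$. This is not a technical artifact: in the inverse-square case the weight $(\sqrt{t}/|x|)^\sigma$ in the heat-kernel bound produces behavior at the origin that genuinely degrades the $L^r$-norm of $H^{s/2}(-\Delta)^{-s/2}f$ once $r \nearrow 3/s$. Quantifying this requires splitting the kernel into local and global pieces, using the on-diagonal heat-kernel singularity for the former and Gaussian decay for the latter, and tracking the $t$-integrability with the correct weights — this bookkeeping is where the endpoint $3/s$ surfaces. Once the kernel estimates are in hand, the remaining Calderón--Zygmund argument is routine and applies uniformly across both potential classes.
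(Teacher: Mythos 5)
The paper does not prove this lemma; it is imported by citation, pointing to \cite[Lemma~2.6]{Hong} for the short-range class \eqref{V1}--\eqref{V2} and to \cite{KMVZZ2} for the inverse-square potential \eqref{V3}. Your proposal therefore has to be judged against those references rather than against an in-text argument.

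For the inverse-square case your plan (two-sided heat-kernel bounds, subordination, Calderón--Zygmund bookkeeping) is indeed the route taken in \cite{KMVZZ2}, so the strategy is the right one. However, your kernel formula has the weight pointing the wrong way. For a \emph{repulsive} potential $V=a|x|^{-2}$, $a>0$, the heat kernel vanishes near the origin, and the Liskevich--Sobol/Milman--Semenov bound in $d=3$ reads
\[
e^{-tH}(x,y) \sim t^{-3/2}\Bigl(1\wedge\tfrac{|x|}{\sqrt t}\Bigr)^{\sigma}\Bigl(1\wedge\tfrac{|y|}{\sqrt t}\Bigr)^{\sigma}e^{-c|x-y|^2/t},\qquad \sigma=\tfrac{-1+\sqrt{1+4a}}{2}>0.
\]
The factor $(1\wedge\sqrt t/|x|)^\sigma$ you wrote equals $1$ for $|x|\lesssim\sqrt t$ and decays for $|x|\gg\sqrt t$, which models decay at infinity rather than vanishing at the origin; with that input the weight bookkeeping would land on the wrong endpoint. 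Since it is precisely the $(|x|/\sqrt t)^\sigma$ decay near the origin that produces the threshold $r<3/s$, getting the orientation right here is not cosmetic.

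For the short-range case you are right that $V\geq 0$ plus Feynman--Kac (or Trotter--Kato) gives $0\leq e^{-tH}(x,y)\leq e^{t\Delta}(x,y)$ for free, but this only handles one inequality; the matching lower bound needed to run a two-sided CZ argument is substantially more delicate than a one-line Duhamel perturbation, and I would not assert without checking that this "reproduces" Hong's Lemma~2.6. Hong's conditions $V\in L^{3/2}$ and $\sup_x\int|V(y)|/|x-y|\,dy<\infty$ are the natural hypotheses for a resolvent-perturbation proof of $L^r$-boundedness of the Riesz-type operators $\nabla H^{-1/2}$ and $H^{1/2}(-\Delta)^{-1/2}$, which bypasses lower heat-kernel bounds entirely; your route is a legitimate alternative but requires supplying that lower bound, which is currently a gap in the proposal.
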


\subsection{Local theory} We recall here the local well-posedness theory and stability theory for \eqref{nls}.  For more details, see \cite{Hong, KMVZ}.

First, we have the following well-posedness result (see \cite[Theorem~2.15]{KMVZ} and \cite[Lemma~2.11]{Hong}).

\begin{proposition}[Well-posedness]\label{P:LWP} Suppose $V$ satisfies \eqref{V1}--\eqref{V2} or \eqref{V3}. 
\begin{itemize}
\item For any initial data $u_0\in H^1$, there exists a unique maximal-lifespan solution $u$ to \eqref{nls}.  Any solution that remains uniformly bounded in $H^1$ throughout its lifespan is global in time.
\item Additionally, if
\[
\|e^{-itH}u_0\|_{L_{t,x}^5((0,\infty)\times\R^3)}
\]
is sufficiently small, then the solution with data $u_0$ is forward-global and obeys $L_{t,x}^5$-bounds forward in time.  
\item More generally, any $H^1$ solution that remains uniformly bounded in $L_{t,x}^5$ throughout its lifespan is global, and global $L_{t,x}^5$-bounds imply scattering.
\item Finally, given any $\psi\in H^1$ we may construct a solution to \eqref{nls} on some interval $(T,\infty)$ that scatters to $\psi$ in $H^1$. 
\end{itemize}
Analogous statements hold backward in time, as well.
\end{proposition}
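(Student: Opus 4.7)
The plan is to reduce every bullet to a standard Strichartz plus contraction-mapping argument applied to the Duhamel formulation
\[
u(t) = e^{-itH}u_0 + i\int_0^t e^{-i(t-s)H}(|u|^2 u)(s)\,ds.
\]
The two non-trivial inputs I would take as black boxes are: (i) Strichartz estimates for $e^{-itH}$, available in both regimes for $V$ (perturbatively off the free propagator when \eqref{V1}--\eqref{V2} holds, cf.\ \cite{Hong}; via the Burq-Planchon-Stalker-Tahvildar-Zadeh dispersive estimate in the case \eqref{V3}, cf.\ \cite{KMVZ}); and (ii) Lemma~\ref{L:EOSS}, which lets me exchange $H^{1/2}$-derivatives for $|\nabla|$-derivatives and thereby apply the usual fractional chain and product rules to the cubic nonlinearity $|u|^2 u$.

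For the first two bullets I would work in a Strichartz-type resolution space $X(I)$ based on $L_t^\infty H_x^1(I)\cap L_{t,x}^5(I)$, together with whatever auxiliary norms are needed to close the Duhamel estimate, and run a contraction on a small ball. H\"older combined with the embedding $H^1\hookrightarrow L^6$ controls $|u|^2 u$ in the dual Strichartz norm and yields a Lipschitz bound
\[
\|\Phi(u)-\Phi(v)\|_{X(I)} \lesssim \bigl(\|u\|_{X(I)}^2+\|v\|_{X(I)}^2\bigr)\|u-v\|_{X(I)}
\]
for the Duhamel map $\Phi$. Smallness then comes either from shrinking $|I|$ (using that $t\mapsto \|e^{-itH}u_0\|_{L_{t,x}^5((0,t)\times\R^3)}$ vanishes at $t=0$), giving local well-posedness, or directly from the hypothesis that $\|e^{-itH}u_0\|_{L_{t,x}^5((0,\infty)\times\R^3)}$ is small, giving the small-data scattering statement. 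The blow-up alternative (i.e.\ uniform $H^1$-boundedness implies globality) then follows in the usual way by restarting the local theory on uniformly-sized intervals using the $H^1$ bound.

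For the third bullet, one partitions $I_{\max}$ into finitely many subintervals on which $\|u\|_{L_{t,x}^5}$ is small enough to absorb the nonlinear contribution in Strichartz, obtaining a uniform $H^1$-bound and hence globality. Scattering follows because $e^{itH}u(t)$ is Cauchy in $H^1$ as $|t|\to\infty$, in view of the fact that the tail $\int_t^{\pm\infty} e^{isH}(|u|^2 u)(s)\,ds$ vanishes in $H^1$. For the fourth bullet, I would instead solve the final-state integral equation
\[
u(t) = e^{-itH}\psi - i\int_t^\infty e^{-i(t-s)H}(|u|^2 u)(s)\,ds
\]
by contraction on $(T,\infty)$ for $T$ large, exploiting $\|e^{-itH}\psi\|_{L_{t,x}^5((T,\infty)\times\R^3)}\to 0$ to supply the needed smallness. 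The main subtlety, rather than any genuine obstacle, is the chain rule for $H^{1/2}$-derivatives acting on $|u|^2 u$; Lemma~\ref{L:EOSS} is exactly what lets one bypass this uniformly across both regimes for $V$. The backward-in-time statements are symmetric under $t\mapsto -t$.
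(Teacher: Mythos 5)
The paper does not prove Proposition~\ref{P:LWP} itself; it cites \cite[Theorem~2.15]{KMVZ} and \cite[Lemma~2.11]{Hong}, and your Strichartz-plus-contraction strategy (Duhamel in a $\dot H^{1/2}$-scaled Strichartz space, equivalence of Sobolev spaces via Lemma~\ref{L:EOSS} to run the fractional chain rule, restart argument for the blow-up alternative, finite subdivision for the third bullet, final-state integral equation for the fourth) is exactly the argument given there. One small imprecision worth flagging: contracting in $L_t^\infty H_x^1\cap L_{t,x}^5$ alone is not enough to close the Duhamel estimate for the 3d cubic nonlinearity, and ``H\"older plus $H^1\hookrightarrow L^6$'' does not produce the required gain; one really must carry a half-derivative in an admissible Strichartz norm (e.g.\ $L_t^5\dot H_x^{1/2,30/11}$) and use the fractional product rule --- which is precisely why Lemma~\ref{L:EOSS} is needed, since $|\nabla|^{1/2}$ does not commute with $e^{-itH}$. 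You do acknowledge ``auxiliary norms'' and the fractional calculus, so the gap is cosmetic rather than structural.
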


We will also need the the following stability result for \eqref{nls}. The primary role of this result will be to transfer space-time bounds from various approximate solutions to true solutions to \eqref{nls}.  

If $V$ satisfies \eqref{V1}--\eqref{V2}, then a suitable stability result appears in \cite[Lemma~2.14]{Hong}.  As in that work, we will utilize `exotic' Strichartz spaces (see \cite{Foschi} and \cite[Lemma~2.4]{Hong}) for the purpose of estimating errors (see the $L_t^4 L_x^{\frac65}$ component of the norm in \eqref{def:N} below).  If $V$ instead satisfies \eqref{V3}, then a suitable result appears in \cite[Theorem~2.17]{KMVZ}, where errors are estimated in the more `traditional' Strichartz spaces with derivatives (see the first three components in \eqref{def:N} below).  Here the only subtle point is to work in spaces in which one has equivalence of Sobolev spaces, so that one can freely interchange $H^{s/2}$ (which commutes with the equation) with $|\nabla|^s$ (for which one can use fractional calculus estimates).

Introducing the notation
\begin{equation}\label{def:N}
\dot N^{\frac12}(I):=L_t^{\frac{10}{7}}\dot H_x^{\frac12,\frac{10}{7}} + L_t^{\frac{5}{3}}\dot H_x^{\frac12,\frac{30}{23}}+L_t^1 \dot H_x^{\frac12}+ L_t^4 L_x^{\frac65}
\end{equation}
and
\[
\dot S^s(I) = L_t^\infty \dot H_V^s\cap L_t^2 \dot H_V^{s,6},\quad S^s(I)= L_t^\infty  H_V^s\cap L_t^2  H_V^{s,6},
\]
with all space-time norms over $I\times\R^3$, we obtain the following: 

\begin{proposition}[Stability]\label{P:stab}  Let $V$ satisfy \eqref{V1}--\eqref{V2} or \eqref{V3}.  Suppose $\tilde v:I\times\R^3\to\C$ solves
\[
(i\partial_t -H)\tilde v = -|\tilde v|^2 \tilde v + e,\quad v(t_0)=\tilde v_0\in H^1,
\]
where $e:I\times\R^3\to\C$.  Let $v_0\in H^1$, and suppose
\[
\|v_0\|_{H^1} + \|\tilde{v}_0\|_{H^1} \leq E \qtq{and} \| \tilde{v}\|_{L_{t,x}^5(I\times\R^3)} \leq L
\]
for some $E,L>0$.  There exists $\eps_0=\eps_0(E,L)>0$ such that if $0<\eps<\eps_0$ and
\begin{equation}\label{stab:small}
\|\tilde v_0 - v_0 \|_{\dot H^{\frac12}} + \|e\|_{\dot N^{\frac12}(I)} < \eps,
\end{equation}
then there exists a solution $v:I\times\R^3\to\C$ to \eqref{nls} with $v(t_0)=v_0$ satisfying
\begin{align}
\label{E:stab}
&\|v-\tilde v\|_{\dot S^{\frac12}(I)} \lesssim_{E,L} \eps, \\
\label{E:stab-bound}
&\| v\|_{S^1(I)}\lesssim_{E,L} 1.
\end{align}
\end{proposition}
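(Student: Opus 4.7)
The plan is to follow the standard stability framework: set $w := v - \tilde v$, which (formally) solves
\[
(i\partial_t - H)w = -\bigl[|\tilde v + w|^2(\tilde v + w) - |\tilde v|^2 \tilde v\bigr] - e, \qquad w(t_0) = v_0 - \tilde v_0.
\]
Split $I$ into finitely many subintervals $I_j$ on which $\|\tilde v\|_{L_{t,x}^5(I_j\times\R^3)} \leq \eta$ for some small $\eta = \eta(E,L)$, run a bootstrap on each piece at the $\dot H^{1/2}$ level using Strichartz estimates adapted to $H$, and iterate, keeping track of how the error at the start of each subinterval picks up contributions from all previous pieces.

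As a preliminary step, I would establish the auxiliary bound $\|\tilde v\|_{S^1(I)} \lesssim_{E,L} 1$ by partitioning according to the $L_{t,x}^5$-size of $\tilde v$, applying Strichartz on each piece to the equation $(i\partial_t - H)\tilde v = -|\tilde v|^2 \tilde v + e$, and invoking Lemma~\ref{L:EOSS} so that $H^{1/2}$ may be interchanged with $|\nabla|^{1/2}$ and a fractional product rule applied to the cubic term. Then on each subinterval $I_j$, Strichartz gives
\[
\|w\|_{\dot S^{1/2}(I_j)} \lesssim \|w(t_j)\|_{\dot H^{1/2}} + \bigl\|\tilde v^2 w + \tilde v w^2 + w^3\bigr\|_{\dot N^{1/2}(I_j)} + \|e\|_{\dot N^{1/2}(I_j)},
\]
and the nonlinear piece is controlled by $\bigl(\eta^2 + \eta\|w\|_{\dot S^{1/2}(I_j)} + \|w\|_{\dot S^{1/2}(I_j)}^2\bigr)\|w\|_{\dot S^{1/2}(I_j)}$ via H\"older and fractional product estimates. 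A bootstrap then yields $\|w\|_{\dot S^{1/2}(I_j)} \leq 2\bigl(\|w(t_j)\|_{\dot H^{1/2}} + \|e\|_{\dot N^{1/2}(I_j)}\bigr)$ provided $\eta$ and $\eps_0$ are sufficiently small. Iterating through the $N = N(L,\eta)$ subintervals produces a geometric bound $\|w\|_{\dot S^{1/2}(I)} \leq C^N \eps$, and choosing $\eps_0 = \eps_0(E,L)$ small enough gives \eqref{E:stab}; the $S^1$ bound \eqref{E:stab-bound} then follows by repeating the argument one final time in $S^1$, using the $\dot S^{1/2}$ control already obtained to absorb the low-frequency terms.

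The main technical obstacle lies in the nonlinear estimate for $\tilde v^2 w$ in $\dot N^{1/2}$, which must be handled differently depending on the potential. For $V$ satisfying \eqref{V1}--\eqref{V2}, the exotic Strichartz component $L_t^4 L_x^{6/5}$ in \eqref{def:N} is needed because the decay assumptions on $V$ do not suffice to freely commute $H^{1/2}$ with a cubic nonlinearity at the level of standard Strichartz spaces; this is the approach adopted in \cite[Lemma~2.14]{Hong}. For the inverse-square case \eqref{V3}, one may work entirely in the first three components of \eqref{def:N} and invoke Lemma~\ref{L:EOSS} to pass $H^{1/2}$ through to $|\nabla|^{1/2}$ before applying the fractional product rule; here one must verify that all intermediate Lebesgue exponents lie in the range $1 < r < 6$ where the equivalence at the $\dot H^{1/2}$ level holds, as in \cite[Theorem~2.17]{KMVZ}. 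In both cases the repulsive assumption on $V$ plays no direct role; it enters only indirectly through the availability of Strichartz estimates for $e^{-itH}$ at the needed level of regularity.
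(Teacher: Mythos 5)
The paper does not prove Proposition~\ref{P:stab} itself; it defers to \cite[Lemma~2.14]{Hong} for potentials satisfying \eqref{V1}--\eqref{V2} and to \cite[Theorem~2.17]{KMVZ} for the inverse-square case, and your proposal follows the same standard bootstrap-on-subintervals framework as those references. You also correctly identify the role of the exotic $L_t^4 L_x^{6/5}$ component for \eqref{V1}--\eqref{V2} versus the equivalence-of-Sobolev-spaces route for \eqref{V3}, which is exactly the distinction the paper draws.

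One small correction: your preliminary step asserts an auxiliary bound $\|\tilde v\|_{S^1(I)}\lesssim_{E,L}1$. This does not close, because the error $e$ is only controlled in $\dot N^{1/2}(I)$ and not at the $H^1$ (or $N^1$) level, so the Strichartz bootstrap for $\tilde v$ at the $H^1$ level cannot absorb $e$. What the argument actually needs, and what can be extracted from the hypotheses via the same subinterval decomposition, is a bound on $\tilde v$ in $\dot S^{1/2}(I)$; that suffices for the nonlinear estimates on $\tilde v^2 w$ and $\tilde v w^2$. The $S^1$ bound in \eqref{E:stab-bound} is a statement about the true solution $v$, obtained after the fact by persistence of regularity from the $H^1$ bound on $v_0$ together with the $L_{t,x}^5$ bound $\|v\|_{L_{t,x}^5}\lesssim L+\eps$ that follows from \eqref{E:stab}; it is not obtained by transferring an $S^1$ bound from $\tilde v$.
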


\subsection{Concentration-compactness}

In this section we import a linear profile decomposition adapted to the $H^1\to L_{t,x}^5$ Strichartz estimate for $e^{-itH}$.  This decomposition plays a key role in establishing compactness for nonscattering solutions in Section~\ref{S:compact}. For potentials satisfying \eqref{V1}--\eqref{V2}, the following result appears as Proposition~5.1 in \cite{Hong}; for potentials satisfying \eqref{V3}, the result appears as Proposition~5.1 in \cite{KMVZ}.

\begin{proposition}[Linear profile decomposition]\label{P:LPD} Suppose $V$ satisfies \eqref{V1}--\eqref{V2} or \eqref{V3}.  Let $\{f_n\}$ be a bounded sequence in $H^1(\R^3)$. Then the following holds up to some subsequence.

There exist $J^*\in \{0,1,2,\dots,\infty\}$, non-zero profiles $\{\phi^j\}_{j=1}^{J^*}\subset H^1(\R^3)$, and space-time parameters $\{(t_n^j,x_n^j)\}_{j=1}^{J^*}\subset\R\times\R^3$ satisfying the following:

For each finite $0\leq J\leq J^*$, we can write
\begin{equation}\label{E:LPD}
f_n=\sum_{j=1}^J \phi_n^j + r_n^J,\qtq{with} \phi_n^j = e^{it_n^j H}[\phi^j(\cdot-x_n^j)] \qtq{and} r_n^J\in H^1.
\end{equation}

For each finite $0\leq J\leq J^*$ we have the following decoupling properties: 
\begin{gather}
\label{decouple1}
\lim_{n\to\infty} \bigl\{ \|H^{\frac{s}{2}} f_n\|_{L_x^2}^2- \sum_{j=1}^J \|H^{\frac{s}{2}}\phi_n^j\|_{L_x^2}^2 - \|H^{\frac{s}{2}}r_n^J\|_{L_x^2}^2 \bigr\}=0,\ s\in\{0,1\},\\
\label{decouple2}
\lim_{n\to\infty} \bigl\{\| f_n\|_{L_x^4}^4 - \sum_{j=1}^J \|\phi_n^j\|_{L_x^4}^4 - \| r_n^J\|_{L_x^4}^4\bigr\}=0.
\end{gather}

The remainder $r_n^J$ obeys
\begin{equation}\label{rnJweak}
\bigl(e^{-it_n^JH} r_n^J\bigr)(x+x_n^J) \rightharpoonup 0 \qtq{weakly in} H_x^1
\end{equation}
and vanishes in the Strichartz norm:
\begin{equation}\label{rnJ}
\lim_{J\to J^*}\limsup_{n\to\infty}\|e^{-itH}r_n^J\|_{L_{t,x}^5(\R\times\R^3)} = 0.
\end{equation}

The parameters $(t_n^j,x_n^j)$ are asymptotically orthogonal in the following sense: for any $j\neq k$,
\begin{equation}\label{orthogonal}
\lim_{n\to\infty} \bigl(|t_n^j - t_n^k| + |x_n^j - x_n^k| \bigr)= \infty.
\end{equation}

Finally, for each $j$, we may assume that either $t_n^j\to\pm\infty$ or $t_n^j\equiv 0$, and either $|x_n^j|\to\infty$ or $x_n^j\equiv 0$.
\end{proposition}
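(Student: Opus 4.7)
My plan is to follow the Bahouri--G\'erard / Keraani-style inductive extraction of bubbles of concentration, adapted to the Schr\"odinger operator $H=-\Delta+V$. The engine is a refined Strichartz inequality of the form
\[
\|e^{-itH}f\|_{L^5_{t,x}(\R\times\R^3)} \lesssim \|f\|_{H^1}^\theta \, \|f\|_B^{1-\theta}
\]
for some $\theta\in(0,1)$ and a suitable (Besov-type) norm $B$ weaker than $H^1$. This is obtained by a Littlewood--Paley-style decomposition adapted to $H$, using Lemma~\ref{L:EOSS} to freely pass between $H^{s/2}$ and $|\nabla|^s$, combined with the standard Strichartz bounds for $e^{-itH}$ recorded in \cite{Hong, KMVZ}. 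The consequence is that if $\{f_n\}$ is bounded in $H^1$ with $\liminf_n \|e^{-itH}f_n\|_{L^5_{t,x}}>0$, then after passing to a subsequence one can find parameters $(t_n,x_n)$ and a nontrivial weak limit $\phi\in H^1$ such that $e^{-it_n H} f_n(\cdot + x_n) \rightharpoonup \phi$ in $H^1$.

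Given this first profile I set $\phi_n^1 = e^{it_n^1 H}[\phi^1(\cdot-x_n^1)]$, define $r_n^1 = f_n - \phi_n^1$, and iterate. The $H^s$ decoupling \eqref{decouple1} (for $s=0,1$) is immediate from the weak convergence once one notes that $e^{-itH}$ acts isometrically on $L^2$ and on $\dot H_V^1$. The $L^4$ decoupling \eqref{decouple2} follows from Rellich-type local compactness, after showing cross-terms vanish by means of the asymptotic orthogonality \eqref{orthogonal}. The iteration terminates in the sense of \eqref{rnJ} because \eqref{decouple1} forces $\sum_j \|\phi^j\|_{H^1}^2$ to be summable, so the profile norms shrink with $j$; plugged back into the refined Strichartz inequality, this drives $\|e^{-itH}r_n^J\|_{L^5_{t,x}}\to 0$ as $J\to J^*$.

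The final normalizations are obtained by subsequencing: if $\{t_n^j\}$ is bounded I absorb its limit into $\phi^j$, and similarly for $\{x_n^j\}$. Note that, because $H$ does not commute with translations, one cannot absorb arbitrary spatial translations into the profile---but this operation is only needed when $|x_n^j|=\mathcal{O}(1)$, in which case passing to a further subsequence and redefining $\phi^j$ suffices. The asymptotic orthogonality \eqref{orthogonal} is enforced by the standard contradiction argument: if two frames $(t_n^j,x_n^j)$ and $(t_n^k,x_n^k)$ failed to be orthogonal, the weak-limit construction at step $k$ would detect the $j$-th profile, contradicting $e^{-it_n^j H}r_n^j(\cdot+x_n^j)\rightharpoonup 0$.

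The step I expect to be the main obstacle is handling the case $|x_n^j|\to\infty$, where $H$ is not translation-invariant and the weak-limit extraction does not canonically produce an $H$-profile. The key reduction is the asymptotic conjugation of $e^{-itH}$ to the free propagator far from the origin: for any $\phi\in H^1$ and any sequence $|x_n|\to\infty$,
\[
\bigl\|\, e^{-itH}[\phi(\cdot-x_n)] - \bigl(e^{it\Delta}\phi\bigr)(\cdot-x_n)\,\bigr\|_{L^\infty_t H^1_x} \longrightarrow 0.
\]
For potentials obeying \eqref{V1}--\eqref{V2} this is a Duhamel estimate relying on the decay of $V$, while for \eqref{V3} one invokes the scaling symmetry preserved by $H$ together with weighted estimates, as done in \cite{KMVZ}. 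With this approximation in place, the weak-limit extraction, decoupling, and orthogonality arguments proceed in parallel with the free case and combine to give the full statement.
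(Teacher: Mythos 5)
First, a framing point: the paper does not prove Proposition~\ref{P:LPD}; it imports it verbatim from Proposition~5.1 of \cite{Hong} (for \eqref{V1}--\eqref{V2}) and Proposition~5.1 of \cite{KMVZ} (for \eqref{V3}), both of which follow the exterior-domain template of \cite{KVZ}. Your overall architecture --- a refined/inverse Strichartz inequality, iterative weak-limit extraction after subtracting detected bubbles, decoupling via asymptotic orthogonality of the parameters, and a final subsequencing to normalize $(t_n^j,x_n^j)$ --- is exactly the standard strategy those references use, so your high-level plan is the right one.

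However, the step you yourself flag as the main obstacle contains a genuine gap. The claim
\[
\bigl\|\, e^{-itH}[\phi(\cdot-x_n)] - \bigl(e^{it\Delta}\phi\bigr)(\cdot-x_n)\,\bigr\|_{L^\infty_t H^1_x} \longrightarrow 0 \qquad (|x_n|\to\infty)
\]
is asserted at too strong a level, and the justification you offer for the inverse-square case --- ``one invokes the scaling symmetry preserved by $H$'' --- is not the correct mechanism. Translation, not scaling, is the broken symmetry being analyzed, and the scale invariance of $|x|^{-2}$ plays no role in absorbing a spatial center escaping to infinity. More importantly, $|x|^{-2}\notin L^{3/2}(\R^3)$, so the Duhamel estimate you are implicitly invoking (via $\|V(\cdot+x_n)\|_{L^{3/2}}\to 0$) does not close for \eqref{V3}. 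What \cite{KMVZ,KMVZZ} actually use is convergence of the translated operators $H_n=-\Delta+a|x+x_n|^{-2}$ to $-\Delta$ in the sense of \cite[Lemma~3.3]{KMVZZ}, combined with low-frequency projections and spatial cutoffs to control the errors, and the resulting comparison of propagators is established in Strichartz-type norms, not uniformly in $t$ in $H^1_x$. This machinery is exactly what appears in \cite[Theorem~6.1]{KMVZ} and is re-sketched in Case~2 of the proof of Lemma~\ref{ximpliesdeltaimpliesx} in the present paper. Even for potentials obeying \eqref{V1}--\eqref{V2}, the comparison is most readily obtained in $L^\infty_t L^2_x$ by Strichartz and dominated convergence and then upgraded to $\dot H^{1/2}$ by interpolation with the uniform $H^1$ bound (as in the paper's proof of Proposition~\ref{P:embedding}), rather than by a direct $\dot H^1$ estimate, which would require commuting a full derivative through $V$.

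A secondary point to keep in mind: since $L^5_{t,x}$ is $\dot H^{1/2}$-critical, any refined Strichartz inequality at the critical regularity detects concentration at arbitrary frequency scales $N_n$; you then have to use the $H^1$ bound as a separate input to force $N_n\sim 1$ after a subsequence, and absorb the scale into $\phi^j$. Writing the refined estimate with $\|f\|_{H^1}$ on the right-hand side does not by itself eliminate scaling, because the Besov-type norm $B$ you invoke will still be scale-covariant; the scale-fixing step needs to be carried out explicitly.
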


The next result states that there exist scattering solutions to \eqref{nls} corresponding to initial data below the $(\text{NLS}_0)$ threshold that are translated sufficiently far from the origin.  This will also play a role in establishing compactness for nonscattering solutions in Section~\ref{S:compact} below.  For $V$ satisfying \eqref{V3} (the inverse-square case), this result appears as Theorem~6.1 in \cite{KMVZ}; we will also need to revisit the idea of the proof in Lemma~\ref{ximpliesdeltaimpliesx} below. For potentials satisfying \eqref{V1}--\eqref{V2}, the proof is simpler and is provided below. 

\begin{proposition}[Embedding nonlinear profiles]\label{P:embedding} Let $V$ satisfy \eqref{V1}--\eqref{V2} or \eqref{V3} and suppose $\{t_n\}\subset\R$ satisfy $t_n\equiv 0$ or $t_n\to\pm\infty$, and let $\{x_n\}\subset\R^3$ satisfy $|x_n|\to\infty$. Let $\phi\in H_x^1(\R^3)$ satisfy
\begin{equation} \label{embedding-threshold}
\begin{aligned}
M(\phi)E_{0}(\phi) < M(Q)E_0(Q) \qtq{and}\|\phi\|_{L_x^2}\|\phi\|_{\dot H_x^1} < \|Q\|_{L_x^2}\|Q\|_{\dot H^1}
&\qtq{if} t_n\equiv 0, \\
\tfrac12\|\phi\|_{L_x^2}^2 \|\phi\|_{\dot H_x^1}^2 < M(Q)E_0(Q) &\qtq{if} t_n\to\pm\infty.
\end{aligned}
\end{equation}
Define
\[
\phi_n = e^{-it_nH}[\phi(\cdot-x_n)].
\]
Then for all $n$ sufficiently large, there exists a global solution $v_n$ to \eqref{nls} with $v_n(0)=\phi_n$ satisfying
\[
\| v_n\|_{S^1(\R)} \lesssim 1,
\]
with implicit constant depending on $\|\phi\|_{H_x^1}$.

Furthermore, for any $\eps>0$, there exists $N_\eps\in\mathbb{N}$ and $\psi_\eps\in C_c^\infty(\R\times\R^3)$ such that for $n\geq N_\eps$,
\begin{equation}\label{embed-cc}
\|v_n-\psi_\eps(\cdot+t_n,\cdot - x_n)\|_{X(\R\times\R^3)}  < \eta,
\end{equation}
where
\begin{align*}
& X \in \{L_{t,x}^5, L_t^5 \dot H_x^{\frac12,\frac{30}{11}}\}.
\end{align*}
\end{proposition}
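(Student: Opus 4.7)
The plan is to invoke the stability theory (Proposition~\ref{P:stab}) with an approximating solution $\tilde v_n$ built from a scattering solution $w$ of \eqref{nls0} shifted in time by $t_n$ and translated in space by $x_n$: since $|x_n|\to\infty$, the initial datum $\phi_n$ sits far from the region where $V$ is effective, so $v_n$ should be well-approximated by a free evolution. Concretely, if $t_n\equiv 0$, let $w$ be the \eqref{nls0} solution with $w(0)=\phi$; by the first line of \eqref{embedding-threshold} and the sub-threshold scattering result of \cite{HR, DHR}, $w\in S^1(\R)\cap L^5_{t,x}(\R\times\R^3)$. If $t_n\to\pm\infty$, let $w$ instead be the \eqref{nls0} solution scattering to $\phi$ in the direction $\pm\infty$, i.e.\ $\|w(s)-e^{is\Delta}\phi\|_{H^1}\to 0$ as $s\to\pm\infty$; its global existence with the same Strichartz bounds follows from the hypothesis $\tfrac12\|\phi\|_{L^2}^2\|\phi\|_{\dot H^1}^2<M(Q)E_0(Q)$, since this is the asymptotic form of the subthreshold condition ($\|w(s)\|_{L^4}\to 0$ forces $E_0(w)=\tfrac12\|\phi\|_{\dot H^1}^2$ and $\|w(s)\|_{\dot H^1}\to\|\phi\|_{\dot H^1}$). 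In either case set $\tilde v_n(t,x):=w(t+t_n,x-x_n)$; a direct computation then yields
\[
(i\partial_t-H)\tilde v_n = -|\tilde v_n|^2\tilde v_n + e_n,\qquad e_n(t,x) := -V(x)\,w(t+t_n,x-x_n).
\]

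The two smallnesses required by Proposition~\ref{P:stab} are $\|\tilde v_n(0)-\phi_n\|_{\dot H^{1/2}}\to 0$ and $\|e_n\|_{\dot N^{1/2}(\R)}\to 0$. The first is immediate when $t_n\equiv 0$; for $t_n\to\pm\infty$, I split
\[
\tilde v_n(0)-\phi_n = \bigl[w(t_n,\cdot-x_n)-e^{it_n\Delta}\phi(\cdot-x_n)\bigr]+\bigl[e^{it_n\Delta}-e^{-it_nH}\bigr]\bigl(\phi(\cdot-x_n)\bigr),
\]
the first bracket vanishing in $H^1$ by the scattering property of $w$, and the second controlled via the Duhamel identity
\[
(e^{-itH}-e^{it\Delta})\psi = -i\int_0^t e^{-i(t-s)H}V e^{is\Delta}\psi\,ds
\]
and Strichartz, which reduces matters to showing $V(x)(e^{is\Delta}\phi)(\cdot-x_n)\to 0$ in $\dot N^{1/2}(\R)$. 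The vanishing of both $e_n$ and this auxiliary term is driven by the same mechanism: $V$ is concentrated near the origin while $w(t+t_n,\cdot-x_n)$ (respectively $(e^{is\Delta}\phi)(\cdot-x_n)$) is spatially concentrated near $x_n\to\infty$, so the product is small in the limit. Under \eqref{V1}--\eqref{V2} I first approximate $w$ in Strichartz norm by a space-time compactly supported function, then use H\"older in the $L^4_t L^{6/5}_x$ slot of \eqref{def:N} together with $\|V\mathbf{1}_{|x|>R}\|_{L^{3/2}}\to 0$ as $R\to\infty$. Under \eqref{V3}, on the region where $w(t+t_n,\cdot-x_n)$ is non-negligible one has $|V(x)|\lesssim|x_n|^{-2}\to 0$, which combined with Hardy-type estimates in the $\dot H^{1/2}$-valued slots of \eqref{def:N} yields the conclusion via the spatial decomposition of \cite[Theorem~6.1]{KMVZ}. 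In the $t_n\to\pm\infty$ case there is additional smallness on each fixed time window from $\|w\|_{L^5_{t,x}([a+t_n,b+t_n]\times\R^3)}\to 0$. Proposition~\ref{P:stab} then produces, for all large $n$, a solution $v_n$ to \eqref{nls} with $v_n(0)=\phi_n$, $\|v_n\|_{S^1(\R)}\lesssim 1$, and $\|v_n-\tilde v_n\|_{\dot S^{1/2}(\R)}\to 0$.

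For the approximation \eqref{embed-cc}, Strichartz and Lemma~\ref{L:EOSS} give $w\in L^5_{t,x}(\R\times\R^3)\cap L^5_t\dot H^{1/2,30/11}_x(\R\times\R^3)$, so by density I may choose, for any $\eps>0$, some $\psi_\eps\in C_c^\infty(\R\times\R^3)$ with $\|w-\psi_\eps\|_X<\eps/2$ in both $X$-norms. By translation invariance $\|\tilde v_n-\psi_\eps(\cdot+t_n,\cdot-x_n)\|_X=\|w-\psi_\eps\|_X<\eps/2$, and combining with $\|v_n-\tilde v_n\|_X\to 0$ (a consequence of the stability output and a routine Strichartz argument) yields the claim for $n$ large. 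The principal technical obstacle is the vanishing $\|e_n\|_{\dot N^{1/2}}\to 0$ in the inverse-square case \eqref{V3}: scaling-criticality together with the singularity of $V$ at the origin precludes the direct H\"older argument available under \eqref{V1}--\eqref{V2}, and a careful spatial decomposition isolating $|x|\sim|x_n|$ (as executed in \cite[Theorem~6.1]{KMVZ}) is required, which is why this proof is presented from scratch only under \eqref{V1}--\eqref{V2}.
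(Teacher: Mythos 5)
Your proposal is correct and follows essentially the same route as the paper: build $\tilde v_n(t,x)=w(t+t_n,x-x_n)$ from a scattering solution $w$ of \eqref{nls0}, show that the initial data match in $\dot H^{1/2}$ and that the error $-V\tilde v_n$ vanishes in the $L_t^4 L_x^{6/5}$ slot of the dual Strichartz norm, apply Proposition~\ref{P:stab}, and obtain \eqref{embed-cc} by $C_c^\infty$-density; the inverse-square case is deferred to \cite[Theorem~6.1]{KMVZ} in both accounts. Your treatment of the error is in fact a bit more careful than the paper's abbreviated line (the displayed $\|V(\cdot+x_n)\|_{L^{3/2}}$ is translation-invariant, so the smallness must come from the tail split $\|V\mathbf{1}_{|x|>R}\|_{L^{3/2}}\to0$ together with spatial localization of $w$, exactly as you wrote).
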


\begin{proof} For $V$ satisfying \eqref{V3} (the inverse-square case), see \cite[Theorem~6.1]{KMVZ}.  For $V$ satisfying \eqref{V1}--\eqref{V2} we argue as follows.

If $t_n\equiv 0$, we let $v$ denote the solution to \eqref{nls0} with $v(0)=\phi$.  If $t_n\to\pm\infty$, we let $v$ be the solution to \eqref{nls0} that scatters to $e^{it\Delta}\phi$ as $t\to\pm\infty$.  By \eqref{embedding-threshold} and Theorem~\ref{T:sub} (with $V\equiv 0$), we obtain that in either case, $v$ is global and obeys $v\in L_t^4 L_x^6$. We now define 
\[
\tilde v_n(t,x)=v(t+t_n,x-x_n).
\]
The $\tilde v_n$ obey global $L_t^4 L_x^6$ bounds that are independent of $n$.  Furthermore, we will show that the $\tilde v_n$ are approximate solutions to \eqref{nls} that asymptotically match $\phi_n$ at $t=0$.

First, we observe that
\begin{align*}
\|\tilde v_n(0)-\phi_n\|_{L^2} & = \|v(t_n,\cdot-x_n)-e^{-it_n H}[\phi(\cdot-x_n)]\|_{L^2} \\
& = \|\{e^{it_n\Delta}-e^{-it_n H}\}[\phi(\cdot-x_n)]\|_{L^2}+o(1) \qtq{as}n\to\infty. 
\end{align*}
Now, using
\[
(i\partial_t -H)\{e^{it\Delta}-e^{-itH}\}\phi(\cdot-x_n) = -V(x)e^{it\Delta}\phi(\cdot-x_n),
\]
we may write
\[
\{e^{it\Delta}-e^{-itH}\}\phi(\cdot-x_n) = i\int_0^t e^{-i(t-s)H}Ve^{is\Delta}\phi(\cdot-x_n)\,ds.
\]
Thus, applying Strichartz and dominated convergence, we have
\begin{align*}
\|\{e^{it_n\Delta}-e^{-it_n H}\}[\phi(\cdot-x_n)]\|_{L^2} & \leq \biggl\| \int_\R e^{isH}\chi_{[0,t_n]}(s)Ve^{is\Delta}\phi(\cdot-x_n)\,ds\biggr\|_{L^2} \\
& \lesssim \|V e^{is\Delta}\phi(\cdot-x_n)\|_{L_t^2 L_x^{6/5}(\R\times\R^3)} \\
& \lesssim \|V(\cdot+x_n)\|_{L^{3/2}}\|e^{is\Delta}\phi\|_{L_t^2 L_x^6} \\
& \lesssim \|V(\cdot+x_n)\|_{L^{3/2}}\|\phi\|_{L^2}\to 0 \qtq{as}n\to\infty.
\end{align*}
Interpolating with $\dot H^1$ boundedness, we obtain
\begin{equation}\label{444data-match}
\|\tilde v_n(0)-\phi_n\|_{\dot H^{\frac12}}\to 0 \qtq{as}n\to\infty. 
\end{equation}

Next, we observe that 
\[
(i\partial_t + \Delta - V)\tilde v_n + |\tilde v_n|^2 \tilde v_n = -V \tilde v_n
\]
and that
\begin{equation}\label{444error}
\|V\tilde v_n\|_{L_t^4 L_x^{6/5}(\R\times\R^3)} \lesssim \|V(\cdot+x_n)\|_{L^{\frac32}}\|v\|_{L_t^4 L_x^6} \to 0 \qtq{as}n\to\infty.
\end{equation}

In light of \eqref{444data-match} and \eqref{444error}, we may apply the stability result to deduce the existence of true solutions $v_n$ to \eqref{nls} satisfying $v_n(0)=0$ and obeying the global space-time bounds.  One can then obtain the approximation in \eqref{embed-cc} by utilizing \eqref{E:stab}. \end{proof}

\subsection{Variational analysis}

The scattering threshold is related to the sharp constant $C_V$ for the following Gagliardo--Nirenberg inequality:
\begin{equation}\label{sharp-GN}
\|f\|_{L^4}^4 \leq C_V \|f\|_{L^2} \|f\|_{\dot H_V^1}^3
\end{equation}
For $V=0$, equality is attained by the ground state $Q$. On the other hand, the following result is proven in \cite[Theorem~3.1]{KMVZ} and \cite[Proposition~1.1]{Hong}.

\begin{lemma}\label{no-optimizers} For $V$ satisfying \eqref{V1}--\eqref{V2} or \eqref{V3}, the sharp constant in \eqref{sharp-GN} is given by $C_V=C_0$; however, equality is never attained. 
\end{lemma}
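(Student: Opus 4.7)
The plan is to establish $C_V = C_0$ by matching upper and lower bounds, then rule out optimizers using the equality cases of each. The upper bound $C_V\leq C_0$ is immediate from $V\geq 0$: since $\|\nabla f\|_{L^2}^2 \leq \|\nabla f\|_{L^2}^2 + \int V|f|^2 = \|f\|_{\dot H_V^1}^2$, the standard sharp Gagliardo--Nirenberg inequality on $\R^3$ yields
\[
\|f\|_{L^4}^4 \;\leq\; C_0 \|f\|_{L^2}\|\nabla f\|_{L^2}^3 \;\leq\; C_0 \|f\|_{L^2}\|f\|_{\dot H_V^1}^3.
\]

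For the matching lower bound I would test the inequality against the sequence $Q_n(x) := Q(x-x_n)$ with $|x_n|\to\infty$. Since the relevant $L^p$ norms and $\|\nabla Q_n\|_{L^2}$ are translation-invariant, it suffices to show that
\[
I_n := \int_{\R^3} V(x)\,|Q(x-x_n)|^2\,dx \;\longrightarrow\; 0 \qtq{as} |x_n|\to\infty,
\]
for then $\|Q_n\|_{\dot H_V^1}^2 \to \|\nabla Q\|_{L^2}^2$ and the ratio $\|Q_n\|_{L^4}^4/\bigl(\|Q_n\|_{L^2}\|Q_n\|_{\dot H_V^1}^3\bigr)$ converges to $C_0$. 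In case \eqref{V1}--\eqref{V2} I would decompose $V = V_1 + V_2$ with $V_1\in C_c(\R^3)$ and $\|V_2\|_{L^{3/2}}$ arbitrarily small; the $V_1$-piece vanishes by the exponential decay of $Q$, and the $V_2$-piece is controlled by $\|V_2\|_{L^{3/2}}\|Q\|_{L^6}^2$ via H\"older. In case \eqref{V3} I would split the domain at $|x|=R$: the outer region contributes $\lesssim R^{-2}\|Q\|_{L^2}^2$, while for $|x|\leq R$ with $|x_n|>2R$ one has $|x-x_n|>|x_n|/2$, so the exponential decay of $Q$ crushes the locally integrable weight $|x|^{-2}$ against $\int_{|x|\leq R}|x|^{-2}\,dx = 4\pi R$.

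To rule out optimizers, suppose $f\not\equiv 0$ saturates \eqref{sharp-GN} with $C_V=C_0$. Equality must then hold throughout the chain displayed above. Equality in the first step characterizes $f$ (up to scaling, translation, and phase) as the Weinstein ground state $Q$, which is strictly positive everywhere; equality in the second forces $\int V|f|^2 = 0$, i.e.\ $V|f|^2 \equiv 0$ almost everywhere. These two conclusions are mutually inconsistent, since $V>0$ on a set of positive measure under either hypothesis. The only genuinely technical step in this plan is verifying $I_n\to 0$; everything else reduces to bookkeeping with the Weinstein characterization and the explicit structure/decay of $V$ and $Q$.
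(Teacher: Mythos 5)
Your argument is correct and is essentially the approach taken in the references the paper cites for this lemma (\cite[Theorem~3.1]{KMVZ} and \cite[Proposition~1.1]{Hong}): the comparison $\|\nabla f\|_{L^2}\leq\|f\|_{\dot H_V^1}$ for the upper bound, the translated ground states $Q(\cdot-x_n)$ with $|x_n|\to\infty$ for the matching lower bound, and the Weinstein characterization of Gagliardo--Nirenberg extremizers combined with the forced vanishing of $\int V|f|^2$ to rule out optimizers. One small point worth flagging: your final step tacitly uses that $V\not\equiv 0$, which is indeed a standing assumption throughout the paper but is needed here --- if $V\equiv 0$ the non-attainment claim is false, since $Q$ itself saturates the inequality.
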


\begin{corollary}\label{C:global} If $u_0\in H^1(\R^3)$ satisfies 
\begin{equation}\label{threshold0}
M(u_0)=M(Q),\quad E_V(u_0)=E_0(Q),\qtq{and} \|u_0\|_{\dot H_V^1}<\|Q\|_{\dot H^1},
\end{equation}
then the corresponding solution to \eqref{nls} is global-in-time and uniformly bounded in $H^1$. 
\end{corollary}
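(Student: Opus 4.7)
The plan is to combine conservation of mass and energy with the sharp Gagliardo-Nirenberg inequality \eqref{sharp-GN} to propagate the strict inequality $\|u(t)\|_{\dot H_V^1} < \|Q\|_{\dot H^1}$ throughout the maximal lifespan, and then to invoke the $H^1$-boundedness criterion in Proposition~\ref{P:LWP}. Set
\[
f(y) := \tfrac{1}{2}y^2 - \tfrac{C_0 \|Q\|_{L^2}}{4} y^3.
\]
Using mass conservation $\|u(t)\|_{L^2} = \|Q\|_{L^2}$ together with \eqref{sharp-GN}, one immediately gets $E_V(u(t)) \geq f(\|u(t)\|_{\dot H_V^1})$ throughout the lifespan, with equality only if equality holds in \eqref{sharp-GN} at $u(t)$.

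A short computation using the Pohozaev identities for $Q$, together with the fact that $Q$ is an optimizer of \eqref{sharp-GN} for $V\equiv 0$, shows that $f$ attains its unique strict maximum on $[0,\infty)$ at $y_* := \|Q\|_{\dot H^1}$, with value $f(y_*) = E_0(Q)$. Energy conservation then gives $E_V(u(t)) = E_0(Q) = f(y_*)$ for every $t$ in the lifespan. If $\|u(t_0)\|_{\dot H_V^1} = y_*$ at some time $t_0$, then equality would have to hold in \eqref{sharp-GN} at $u(t_0)$, contradicting Lemma~\ref{no-optimizers}. Since the map
\[
t \mapsto \|u(t)\|_{\dot H_V^1}^2 = 2E_V(u(t)) + \tfrac{1}{2}\|u(t)\|_{L^4}^4
\]
is continuous (by energy conservation and $u \in C(I; H^1) \hookrightarrow C(I; L^4)$) and starts below $y_*$, it must remain strictly below $y_*$ throughout the lifespan.

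Finally, since $V \geq 0$ we have $\|u(t)\|_{\dot H^1} \leq \|u(t)\|_{\dot H_V^1} < \|Q\|_{\dot H^1}$, which together with $\|u(t)\|_{L^2} = \|Q\|_{L^2}$ yields a uniform $H^1$-bound on $u$. Proposition~\ref{P:LWP} then delivers globality and completes the proof. I do not expect any serious obstacle: the only delicate point is the appeal to Lemma~\ref{no-optimizers} to rule out the borderline case $\|u(t)\|_{\dot H_V^1} = y_*$. In the strict sub-threshold regime $E_V(u_0) < E_0(Q)$ this step is unnecessary, as the standard connectedness argument based on the level sets of $f$ then suffices on its own.
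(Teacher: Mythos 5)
Your proposal is correct and takes essentially the same approach as the paper: a continuity argument to propagate the strict inequality $\|u(t)\|_{\dot H_V^1} < \|Q\|_{\dot H^1}$, with the borderline case ruled out by Lemma~\ref{no-optimizers} because equality would force $u(t)$ to optimize the sharp Gagliardo--Nirenberg inequality \eqref{sharp-GN}. The paper's version is more terse (observing directly that $\|u(t)\|_{\dot H_V^1}=\|Q\|_{\dot H^1}$ plus $E_V(u)=E_0(Q)$ gives $\|u(t)\|_{L^4}=\|Q\|_{L^4}$, hence an optimizer), while your scaffolding via the auxiliary function $f$ and its unique maximum at $y_*=\|Q\|_{\dot H^1}$ makes the same computation slightly more explicit; both are sound.
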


\begin{proof} Recalling the local theory for \eqref{nls} and the fact that $M(u)=M(Q)$, it suffices to show that the condition
\[
\|u(t)\|_{\dot H_V^1}<\|Q\|_{\dot H^1}
\]
persists throughout the lifespan $I$ of $u$.  In fact, if $\|u(t)\|_{\dot H_V^1}=\|Q\|_{\dot H^1}$ for some $t\in I$, then the condition $E_V(u)=E_0(Q)$ yields $\|u(t)\|_{L^4}=\|Q\|_{L^4}$. In particular (recalling $C_V=C_0$), we find that 
\[
\|Q\|_{L^4}^4 = C_0 \|Q|\|_{L^2} \|Q\|_{\dot H^1}^3 \implies \| u(t)\|_{L^4}^4 = C_a \|u(t)\|_{L^2}\|u(t)\|_{\dot H_V^1}^3,
\]
which contradicts Lemma~\ref{no-optimizers}.\end{proof}

\subsection{Virial identities}\label{S:virial}

We let $\phi$ be a real-valued, radial function satisfying
\[
\phi(x) = \begin{cases} |x|^2 & |x|\leq 1, \\ 4 & |x|>3\end{cases} \qtq{and} |\partial^\alpha \phi(x)| \lesssim |x|^{2-|\alpha|}.
\]
We further impose $\partial_r \phi \geq 0$, where $\partial_r$ denotes the radial derivative.  

Given $R\geq 1$, we define 
\[
w_R(x) = R^2 \phi(\tfrac{x}{R}) \qtq{and} w_\infty(x) = |x|^2,
\]
and for $R\in[1,\infty]$ we define the functional
\[
P_R[u] = 2\Im \int \bar u \nabla u \cdot \nabla w_R \,dx. 
\]

We write the standard (truncated) virial identity as follows. 
\begin{lemma}[Standard virial identity]\label{L:virial} Fix $R\in[1,\infty]$.  If $u(t)$ solves \eqref{nls}, then 
\begin{equation}\label{virial-id}
\tfrac{d}{dt}P_R[u]  = F_R^V[u(t)],
\end{equation}
where
\begin{align*}
F_R^V[u]  & := \int (-\Delta\Delta w_R)|u|^2 + 4\Re \bar u_j u_k \partial_{jk}[w_R] - 2|u|^2\nabla V\cdot\nabla w_R - |u|^4 \Delta w_R\,dx \\
& = F_R^0[u] - 2\int |u|^2 \nabla V\cdot\nabla w_R\,dx. 
\end{align*}
\end{lemma}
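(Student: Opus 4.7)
The proof is a direct, commutator-style computation. Differentiating $P_R[u(t)]$ under the integral sign and substituting $\partial_t u = -iHu + i|u|^2 u$ with $H=-\Delta+V$ produces
\[
\tfrac{d}{dt}P_R[u] = 2\Re\!\int\!\bigl\{(H\bar u)\nabla u - \bar u\,\nabla(Hu)\bigr\}\!\cdot\!\nabla w_R\,dx + 2\Re\!\int\!\bigl\{\bar u\,\nabla(|u|^2 u) - |u|^2\bar u\,\nabla u\bigr\}\!\cdot\!\nabla w_R\,dx,
\]
and the remaining task is to simplify each bracket and integrate by parts against $w_R$.

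I would handle the three physical contributions separately. In the nonlinear bracket, the product rule collapses the integrand algebraically to $\bar u \nabla(|u|^2 u) - |u|^2\bar u\nabla u = |u|^2\nabla|u|^2 = \tfrac12 \nabla|u|^4$, and one integration by parts then produces $-\int|u|^4\Delta w_R\,dx$. Inside the linear bracket, the terms proportional to $V\nabla u$ cancel exactly and what survives is $-|u|^2\nabla V$, contributing $-2\int|u|^2\,\nabla V\!\cdot\!\nabla w_R\,dx$. The remaining kinetic piece $\{(-\Delta\bar u)\nabla u - \bar u\,\nabla(-\Delta u)\}\!\cdot\!\nabla w_R$ is the classical virial expression: two successive integrations by parts—first moving $\Delta$ off each factor of $u$, then pairing one derivative from each factor with a second derivative of $w_R$—yield precisely $4\Re(\bar u_j u_k)\partial_{jk}w_R - (\Delta^2 w_R)|u|^2$. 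Summing the three contributions gives $F_R^V[u]$.

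The one point requiring care—more bookkeeping than obstacle—is justifying these formal manipulations at the $H^1$ regularity of our solutions. The standard remedy is to approximate $u_0$ in $H^1$ by $H^2$ data; Proposition~\ref{P:LWP} together with persistence of regularity yields $H^2$ solutions on which every integration by parts is legal, after which one passes to the limit using the $H^1$-continuity of the flow provided by Proposition~\ref{P:stab}. For $R<\infty$, $w_R$ and all its derivatives are bounded and compactly supported in $\{|x|\le 3R\}$, so every integrand in $F_R^V[u]$ is absolutely integrable under \eqref{V1} or \eqref{V3} once $u\in H^1$; for $R=\infty$ one additionally needs $xu\in L^2$, which is propagated by the flow and is the setting in which the unlocalized virial identity is to be invoked. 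I emphasize that the repulsivity hypothesis $x\cdot\nabla V\le 0$ plays no role in the identity itself—it will enter only later, when \eqref{virial-id} is specialized to $w_\infty(x)=|x|^2$ to produce a favorable sign.
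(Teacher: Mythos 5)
Your computation is correct and is the standard derivation of the truncated virial identity; the paper itself states Lemma~\ref{L:virial} without proof precisely because this calculation is classical, so there is no textual argument to compare against. Every step you give checks out: the substitution $\partial_t u=-iHu+i|u|^2u$ produces the two real-part brackets; the nonlinear bracket collapses to $\tfrac12\nabla|u|^4$ and one integration by parts gives $-\int|u|^4\Delta w_R\,dx$; in the linear bracket the $V\bar u\nabla u$ and $-\bar u V\nabla u$ pieces cancel leaving $-|u|^2\nabla V$; and the kinetic bracket, after the two rounds of integration by parts you describe, yields $4\Re\int\bar u_j u_k\partial_{jk}w_R\,dx-\int(\Delta^2 w_R)|u|^2\,dx$ with the sign matching $(-\Delta\Delta w_R)$ in the statement. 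Your remarks on rigor are also well placed: for $R<\infty$ the bounded, compactly supported derivatives of $w_R$ together with $u\in H^1$ and the hypotheses \eqref{V1} or \eqref{V3} make every integrand absolutely integrable, and an $H^2$-approximation plus $H^1$-continuity of the flow upgrades the formal identity to $H^1$ solutions; for $R=\infty$ the extra weighted assumption $xu\in L^2$ is indeed needed, which is why the paper works almost exclusively with the truncated weight. Finally, your observation that the repulsivity condition $x\cdot\nabla V\le 0$ plays no role in the identity itself (only in extracting a favorable sign later) is exactly right and consistent with how the paper uses \eqref{usual-V}.
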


When $R=\infty$, we obtain the standard virial quantity
\begin{equation}\label{usual-V}
F_\infty^V[u] = 8\int_{\R^3} |\nabla u|^2 - \tfrac34 |u|^4\,dx - 4\int_{\R^3} x\cdot\nabla V(x)|u|^2\,dx. 
\end{equation}
This quantity controls the quantity $\delta(t)$ that plays an essential role in Section~\ref{S:impossible} below. In standard applications of the virial identity, one adds and subtracts $F_\infty^V[u]$ to the right-hand side of \eqref{virial-id}, treating $F_R^V[u]-F_\infty^V[u]$ as an error term.

In addition to this standard application, we will need to make use of a `modulated' virial identity, which incorporates the possibility that the solution to \eqref{nls} approaches the orbit of the standard NLS ground state, that is, the set
\[
\{e^{i\theta}Q(\cdot-y):\theta\in\R,\ y\in\R^3\}.
\]

The following lemma will allow us to `add zero' in the virial identity in a convenient form.

\begin{lemma}\label{L:add-zero} For any $R\in[1,\infty]$, $\theta\in\R$, and $y\in\R^3$, we have
\[
F_R^0[e^{i\theta}Q(\cdot-y)] = 0. 
\]
\end{lemma}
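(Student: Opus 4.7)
The plan is to exploit the dynamical interpretation of $F_R^0$ via the virial identity, combined with the fact that the standing wave $e^{it}Q(x-y)$ is a genuine solution to $(\text{NLS}_0)$. Two simple observations make this essentially immediate, so I do not expect a real obstacle — the only thing to check carefully is phase-invariance and the reality of the momentum-like quantity $P_R$ evaluated on a real profile.

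First, I would reduce the statement to $\theta = 0$. Inspecting the definition of $F_R^0[u]$, the integrand involves only $|u|^2$, $\Re(\bar u_j u_k)$, and $|u|^4$, every one of which is invariant under multiplication of $u$ by a constant unimodular factor $e^{i\theta}$. Hence $F_R^0[e^{i\theta}Q(\cdot-y)] = F_R^0[Q(\cdot-y)]$, and it is enough to prove the claim for $\theta = 0$.

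Next, I would use that $u(t,x) := e^{it}Q(x-y)$ solves $(\text{NLS}_0)$: this is immediate from the elliptic equation \eqref{elliptic} satisfied by $Q$ and translation invariance. Applying Lemma~\ref{L:virial} with $V \equiv 0$ to this solution yields
\[
F_R^0[u(t)] = \tfrac{d}{dt} P_R[u(t)].
\]
Now compute $P_R[u(t)]$ directly: the phase $e^{it}$ cancels between $\bar u$ and $\nabla u$, giving
\[
P_R[u(t)] = 2\Im \int Q(x-y)\,\nabla Q(x-y)\cdot \nabla w_R(x)\,dx.
\]
Since $Q$ is real-valued (by its definition as the ground state), $w_R$ is real, and so is $\nabla w_R$, the integrand here is purely real. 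Therefore $P_R[u(t)] \equiv 0$ for all $t\in\R$, which forces $F_R^0[u(t)] \equiv 0$. Evaluating at $t=0$ gives $F_R^0[Q(\cdot-y)] = 0$, and combined with the phase-invariance observation above, we conclude $F_R^0[e^{i\theta}Q(\cdot-y)] = 0$ for every $\theta\in\R$, $y\in\R^3$, and $R\in[1,\infty]$.

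The argument works uniformly in $R$ because neither ingredient (reality of $Q$ and $w_R$; the virial identity of Lemma~\ref{L:virial}) distinguishes the truncation level. One could alternatively prove the identity by direct integration by parts using \eqref{elliptic}, but the dynamical route above is cleaner and avoids any bookkeeping with the cutoff $w_R$; the only conceptual point is that $P_R$ vanishes on real profiles, which is the structural reason the lemma holds.
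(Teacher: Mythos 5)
Your proof is correct and takes essentially the same route as the paper: both rely on the observation that $P_R$ vanishes on the standing-wave solution $e^{it}e^{i\theta}Q(\cdot-y)$ (since $Q$ and $\nabla w_R$ are real), combined with the virial identity of Lemma~\ref{L:virial} applied with $V\equiv 0$. The only cosmetic difference is that you reduce to $\theta=0$ by phase-invariance first, whereas the paper simply carries the extra constant phase along, which cancels identically in $P_R$ anyway.
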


\begin{proof} Fix $R\in[1,\infty]$, $\theta\in\R$, and $y\in\R^3$.  As $Q$ is real-valued, 
\[
P_R[e^{it}e^{i\theta}Q(\cdot-y)]=0 \qtq{for all}t\in\R.
\]
As $e^{it}e^{i\theta}Q(\cdot-y)$ solves $(\text{NLS}_0)$, Lemma~\ref{L:virial} implies
\[
F_R^0[e^{it}e^{i\theta}Q(\cdot-y)]=0 \qtq{for all}t\in\R. 
\]
Evaluating at $t=0$ yields the desired result. \end{proof}

Using the identities above, we obtain the following corollary.

\begin{corollary}\label{C:virial} Let $u$ be a solution to \eqref{nls} on an interval $I$.  For any $R\in[1,\infty)$, $\chi:I\to\R$, $\theta:I\to\R$, and $y:I\to\R^3$,  we have
\begin{align}
\tfrac{d}{dt}P_R[u] & = F_\infty^0[u(t)] \nonumber \\
& \quad + F_R^V[u(t)]-F_\infty^0[u(t)] \label{the-error-term} \\
& \quad - \chi(t)\bigl\{F_R^0[e^{i\theta(t)}Q(\cdot-y(t))]-F_\infty^0[e^{i\theta(t)}Q(\cdot-y(t))]\bigr\} \label{add-zero}
\end{align}
for $t\in I$. 
\end{corollary}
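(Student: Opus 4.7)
The plan is straightforward: the corollary is essentially a bookkeeping identity, and it reduces immediately to Lemma~\ref{L:virial} combined with Lemma~\ref{L:add-zero}. I would start from Lemma~\ref{L:virial}, which asserts $\tfrac{d}{dt}P_R[u(t)] = F_R^V[u(t)]$, and then rewrite the right-hand side by adding and subtracting two conveniently chosen quantities.

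Concretely, I would first decompose
\[
F_R^V[u(t)] = F_\infty^0[u(t)] + \bigl(F_R^V[u(t)] - F_\infty^0[u(t)]\bigr),
\]
which already produces the first two displayed lines in the corollary. To append line \eqref{add-zero}, I would invoke Lemma~\ref{L:add-zero}, which gives $F_R^0[e^{i\theta(t)}Q(\cdot - y(t))] = 0$ for every $R\in[1,\infty]$. In particular, the bracketed quantity $F_R^0[e^{i\theta(t)}Q(\cdot-y(t))] - F_\infty^0[e^{i\theta(t)}Q(\cdot-y(t))]$ is identically zero, so multiplying by any scalar function $\chi(t)$ and subtracting it from the preceding display changes nothing. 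No regularity or integrability on $\chi$, $\theta$, $y$ is required, since we are only subtracting zero pointwise in $t$.

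There is really no obstacle here; the content is entirely in the two preceding lemmas. The purpose of presenting the identity in this somewhat redundant form is conceptual rather than technical: in the modulated virial analysis of Section~\ref{S:impossible}, the leading term $F_\infty^0[u(t)]$ will provide (up to a sign and the contribution from $x\cdot\nabla V$, cf.\ \eqref{usual-V}) the coercive quantity $\delta(u(t))$; the difference $F_R^V[u(t)]-F_\infty^0[u(t)]$ in \eqref{the-error-term} will be handled as a spatial-truncation error controlled by the tail of $u(t)$ outside $B_R$; and the free parameters $\chi(t)$, $\theta(t)$, $y(t)$ in \eqref{add-zero} will be chosen from the modulation decomposition \eqref{intro-modulation} so that, when $u(t)$ is close to the orbit of $Q$, the ostensibly vanishing expression in \eqref{add-zero} cancels the worst contributions in \eqref{the-error-term} and leaves behind only terms that are genuinely quadratic (or higher) in the remainder $g(t)$.
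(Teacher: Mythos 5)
Your proof is correct and matches the paper's argument exactly: both invoke Lemma~\ref{L:virial} to get $\tfrac{d}{dt}P_R[u]=F_R^V[u(t)]$, rewrite via adding and subtracting $F_\infty^0[u(t)]$, and then observe via Lemma~\ref{L:add-zero} that the bracketed quantity in \eqref{add-zero} vanishes identically, so subtracting $\chi(t)$ times it is harmless. The concluding remarks about the role of each term in the modulated virial argument are accurate but not part of the proof itself.
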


\begin{proof}  By Lemma~\ref{L:add-zero}, we have that \eqref{add-zero}$=0$ for all $t\in I$.  Thus this identity is simply a rewriting of \eqref{virial-id}. 
\end{proof}

The idea behind Corollary~\ref{C:virial} is that the terms in \eqref{the-error-term} and \eqref{add-zero} can be read in two ways. First, with $\chi(t)=0$ we recover the standard virial error term in \eqref{the-error-term}.  On the other hand, if $u$ is known to be close to the orbit of $Q$, then we can exploit this fact by choosing $\chi(t)=1$, as the combination of \eqref{the-error-term} and \eqref{add-zero} basically takes the form
\[
G(u(t))-G(e^{i\theta(t)}Q(\cdot-y(t)))
\]
for some continuous functional $G$. 
\section{Compactness for nonscattering solutions}\label{S:compact}

Our first main step towards the proof of Theorem~\ref{T} is the following proposition, which shows that nonscattering threshold solutions must be `compact' modulo some time-dependent spatial center.    The proof follows the same strategy as the proof of \cite[Proposition~4.1]{DLR}, and relies primarily on results from our previous work \cite{KMVZ} as well as the work of Hong \cite{Hong}.  Accordingly, our presentation will be somewhat abbreviated.

\subsection{Compactness for nonscattering solutions}

\begin{proposition}\label{P:compact}  Suppose Theorem~\ref{T} fails for some potential $V$ satisfying \eqref{V1}--\eqref{V2} or \eqref{V3}.  Then there exists a forward-global solution $u$ to \eqref{nls} obeying
\begin{align}
&M(u_0)=M(Q),\quad E_V(u_0)=E_0(Q),\qtq{and} \|u_0\|_{\dot H_V^1}<\|Q\|_{\dot H^1},\label{threshold} \\
&\|u\|_{L_{t,x}^5([0,\infty)\times\R^3)}=\infty,\label{blowup}
\end{align}
and there exists $x:[0,\infty)\to\R^3$ such that
\begin{equation}\label{compact-orbit}
\{u(t,\cdot+x(t)):t\in[0,\infty)\}\qtq{is pre-compact in}H^1.
\end{equation}
\end{proposition}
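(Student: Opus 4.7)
The plan is to run a standard concentration-compactness argument at the sharp threshold, using the tools assembled in Section~\ref{S:preliminaries}. By contradiction, suppose Theorem~\ref{T} fails. Then there exists a sequence of initial data $u_{0,n}\in H^1$ satisfying \eqref{SUBTH} whose corresponding solutions $u_n$ obey $\lim_{n\to\infty}\|u_n\|_{L_{t,x}^5([0,T_n)\times\R^3)}=\infty$, where $[0,T_n)$ is the maximal forward interval of existence. After a rescaling/renormalization step (exploiting the scale-invariance of the threshold condition), we may reduce to the situation where \eqref{threshold} holds for each $u_{0,n}$; in particular Corollary~\ref{C:global} shows the $u_n$ are forward-global and uniformly bounded in $H^1$.

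The next step is to apply the linear profile decomposition (Proposition~\ref{P:LPD}) to the bounded sequence $\{u_{0,n}\}$, obtaining profiles $\phi^j$ and parameters $(t_n^j,x_n^j)$. Using the decoupling identities \eqref{decouple1}, \eqref{decouple2}, Lemma~\ref{L:EOSS}, and the fact that $V(\cdot+x_n)\to 0$ in a suitable sense whenever $|x_n^j|\to\infty$, one shows the asymptotic decoupling
\[
M(u_{0,n})\,E_V(u_{0,n}) \;\gtrsim\; \sum_{j=1}^J M(\phi_n^j)\,\mathcal{E}^j(\phi^j) + M(r_n^J)E_V(r_n^J) + o(1),
\]
where $\mathcal{E}^j$ denotes $E_V$ when $x_n^j\equiv 0$ and $E_0$ (linear if $t_n^j\to\pm\infty$) when $|x_n^j|\to\infty$; together with the corresponding decoupling for $\|u_{0,n}\|_{L^2}\|u_{0,n}\|_{\dot H_V^1}$. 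Because we sit exactly at the threshold, each profile and the remainder must individually be at or below the threshold, and only one profile can actually attain it.

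Having only strictly sub-threshold pieces aside from at most one distinguished profile, I would construct nonlinear profiles $v_n^j$: for profiles with $x_n^j\equiv 0$ I use sub-threshold scattering (Theorem~\ref{T:sub}) directly, while for profiles with $|x_n^j|\to\infty$ I use the embedding result (Proposition~\ref{P:embedding}). Summing these nonlinear profiles and inserting into the stability proposition (Proposition~\ref{P:stab}), with $r_n^J$ playing the role of the error via \eqref{rnJ} and the pairwise decoupling \eqref{orthogonal}, would then produce uniform $L_{t,x}^5$-bounds for $u_n$, contradicting $\|u_n\|_{L_{t,x}^5}\to\infty$. The only surviving scenario is therefore $J^*=1$, with $r_n^1\to 0$ in $H^1$, $x_n^1\equiv 0$ (else embedding gives scattering), and $t_n^1\equiv 0$ (a $t_n^1\to +\infty$ would force small forward Strichartz norm by Proposition~\ref{P:LWP}, contradicting nonscattering; $t_n^1\to -\infty$ is incompatible with \eqref{blowup} combined with sub-threshold control of backward-scattering behavior). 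This yields a single threshold profile $\phi=u_0$ and a nonscattering forward-global solution $u$ satisfying \eqref{threshold} and \eqref{blowup}.

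To extract the spatial center $x(t)$ and establish \eqref{compact-orbit}, I would apply the very same profile decomposition to any sequence $\{u(\tau_n)\}\subset H^1$ with $\tau_n\to\infty$. The nonscattering property of $u$ on each time tail $[\tau_n,\infty)$ prevents the decomposition from having more than one nontrivial profile or a nontrivial remainder; accordingly one obtains a single profile $\psi$ and translations $x_n\in\R^3$ such that $u(\tau_n,\cdot+x_n)\to\psi$ in $H^1$. Setting $x(t)$ to be (a measurable selection of) such a translation yields pre-compactness of $\{u(t,\cdot+x(t))\}$ in $H^1$. The principal obstacle throughout is the delicate bookkeeping at the threshold: one must ensure the mass-energy and Pohozaev quantities decouple cleanly across the profile decomposition despite the presence of $V$, which is where Lemma~\ref{L:EOSS} and the asymptotic vanishing $V(\cdot+x_n)\to 0$ (for translated profiles) do the essential work, and where the strict inequality in \eqref{SUBTH} is used to rule out multiple threshold profiles.
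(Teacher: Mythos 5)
Your proposal contains the right ingredients---a rescaling reduction to the normalization \eqref{threshold} and a profile decomposition---but the organization is wrong, and the first round of concentration-compactness is misapplied. If Theorem~\ref{T} fails, there is a single datum $u_0$ obeying \eqref{SUBTH} whose solution fails to scatter forward in time; after rescaling (a direct scaling symmetry for $V=a|x|^{-2}$, or a rescaling of the potential itself for general $V$ as in Lemma~\ref{L:change-hypothesis}), Corollary~\ref{C:global} already yields the desired forward-global, $H^1$-bounded $u$ satisfying \eqref{threshold} and \eqref{blowup}. There is no nontrivial ``sequence of initial data'' here, and nothing for your first profile decomposition to act on; applied to a constant sequence it collapses trivially and gives no information.

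The decomposition that does the actual work is the one you mention only at the very end: apply Proposition~\ref{P:LPD} to $\{u(\tau_n)\}$ for an arbitrary $\tau_n\to\infty$, rule out vanishing ($J^*=0$) via the small-data theory, rule out dichotomy ($J^*\geq 2$) by showing each profile is strictly sub-threshold (via mass/energy decoupling \eqref{equ:masdec}--\eqref{equ:stillbelow} and positivity of the individual profile energies as in \eqref{positive-energy}), constructing nonlinear profiles (Theorem~\ref{T:sub} when $x_n^j\equiv 0$, Proposition~\ref{P:embedding} when $|x_n^j|\to\infty$), and invoking stability to contradict \eqref{blowup}; one must also dispose of the $t_n\to\pm\infty$ scenarios in the surviving $J^*=1$ case. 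You sketch these steps, but in the wrong location (applied to initial data rather than to time slices of the single solution), so the heart of the compactness argument is effectively elided. A secondary quibble: the strict inequality in \eqref{SUBTH} is used in Corollary~\ref{C:global} to propagate $\|u(t)\|_{\dot H_V^1}<\|Q\|_{\dot H^1}$ and globalize, not to ``rule out multiple threshold profiles''; that exclusion rests on the positivity of the profile energies, not on the kinetic-energy constraint.
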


\begin{proof}[Proof of Proposition~\ref{P:compact}]  Our first goal is to show that if Theorem~\ref{T} fails, then we may find a forward-global, $H^1$ bounded solution $u$ obeying \eqref{threshold} and \eqref{blowup}. 

\begin{proof}[Case 1.] First suppose $V$ satisfies \eqref{V3}, i.e. $V=a|x|^{-2}$ for some $a>0$. In this case, the equation \eqref{nls} enjoys the following scaling symmetry: 
\begin{equation}\label{scaling}
u(t,x)\mapsto \lambda u(\lambda^2 t, \lambda x).
\end{equation}

We now suppose that Theorem~\ref{T} fails and seek a solution as above.  In particular, we suppose that there exists a solution $v$ obeying \eqref{SUBTH} that fails to scatter forward in time.  If we then define $u_0(x)=\lambda v_0(\lambda x)$, where $\lambda=\tfrac{M(Q)}{M(v_0)}$, then the corresponding solution $u$ obeys \eqref{threshold} and is given by
\begin{equation}\label{scaling2}
u(t,x)=\lambda v(\lambda^2 t,\lambda x).
\end{equation}
By Corollary~\ref{C:global} we have that $u$ is forward-global and $H^1$-bounded, while \eqref{scaling2} guarantees \eqref{blowup}.\end{proof}

\begin{proof}[Case 2.]  Next suppose $V$ satisfies \eqref{V1}--\eqref{V2}.  To pass from \eqref{SUBTH} to \eqref{threshold}, we use the following. 

\begin{lemma}\label{L:change-hypothesis} Suppose that Theorem~\ref{T} holds for  potentials satisfying \eqref{V1}--\eqref{V2} with the hypothesis \eqref{SUBTH} replaced by \eqref{threshold}.  Then Theorem~\ref{T} holds with the original hypothesis \eqref{SUBTH}.
\end{lemma} 

\begin{proof}  We suppose Theorem~\ref{T} holds with the hypothesis \eqref{threshold}.  We then take $V$ satisfying \eqref{V1}--\eqref{V2} and $u_0\in H^1$ obeying \eqref{SUBTH}, and we let $u$ denote the corresponding solution to \eqref{nls}.  Our task is to deduce global space-time bounds for $u$. 

To this end, we let $v_0(x)=u_0(\lambda x)$, where $\lambda=\tfrac{M(Q)}{M(u_0)}$, and define
\begin{equation}\label{vu-scaling}
v(t,x) = \lambda u(\lambda^2 t,\lambda x). 
\end{equation}
Writing $V_\lambda$ for the rescaled potential
\[
V_\lambda(x) = \lambda^2 V(\lambda x),
\]
it follows from \eqref{SUBTH} that $v_0$ satisfies \eqref{threshold} for the potential $V_\lambda$, i.e.
\[
M(v_0) = M(Q),\quad E_{V_{\lambda}}(v_0)=E_0(Q),\qtq{and} \|v_0\|_{\dot H^1_{V_\lambda}}<\|Q\|_{\dot H^1},
\]
and solves the equation $(\text{NLS}_{V_\lambda})$, i.e.
\[
i\partial_t v + \Delta v - V_\lambda v = -|v|^2 v. 
\] 
Observing that $V_\lambda$ still satisfies \eqref{V1}--\eqref{V2}, it follows by hypothesis that $v$ obeys global $L_{t,x}^5$ space-time bounds.  In light of \eqref{vu-scaling}, we deduce that global space-time bounds hold for $u$ as well. \end{proof}

In light of Lemma~\ref{L:change-hypothesis}, we see that if Theorem~\ref{T} fails (with the original hypothesis \eqref{SUBTH}), then it fails with the hypothesis \eqref{threshold}.  Therefore, recalling Corollary~\ref{C:global}, we may find a forward-global, $H^1$ bounded solution $u$ obeying \eqref{threshold} and \eqref{blowup}, as desired.\end{proof}

We now let $u$ be a forward-global solution to \eqref{nls} obeying \eqref{threshold} and \eqref{blowup}.  Observe that by the proof of Corollary~\ref{C:global}, we also have that
\begin{equation}\label{still-below}
\|u(t)\|_{\dot H_V^1} < \|Q\|_{L^2}\qtq{for all}t\geq 0.
\end{equation}
To complete the proof, it suffices to show that for any $\tau_n\to\infty$, there exists a subsequence in $n$ and a sequence $x_n\in\R^3$ so that 
\[
u(\tau_n,x+x_n)\qtq{converges strongly in}H^1.
\]
(The existence of the function $x(t)$ as a consequence of this is discussed below in Section~\ref{S:xt}.)

We now apply the linear profile decomposition (Proposition~\ref{P:LPD}) to the bounded sequence $u(\tau_n)$ to obtain 
\begin{equation}\label{equ:LPD-1}
u_n:=u(\tau_n)=\sum_{j=1}^J \phi_n^j + r_n^J
\end{equation}
along a subsequence, with all of the properties stated in that proposition.  We single out three possibilities, namely: vanishing ($J^*=0$), compactness ($J^*=1)$, or dichotomy ($J^*\geq 2$). 

If $J^*=0$, then we obtain 
\[
\|e^{-itH}u(\tau_n)\|_{L_{t,x}^5([0,\infty)\times\R^3)} \to 0 \qtq{as}n\to\infty. 
\]
Using this together with stability (i.e. Proposition~\ref{P:stab}), we readily deduce 
\[
\|u(t+\tau_n)\|_{L_{t,x}^5((0,\infty)\times\R^3)} = \|u\|_{L_{t,x}^5((\tau_n,\infty)\times\R^3} \lesssim 1
\]
for all large $n$, which contradicts \eqref{blowup}.  Thus vanishing cannot occur. 

Next, we rule out dichotomy (i.e. $J^*\geq 2$).  If $J^*\geq 2$, then we can firstly utilize the decoupling of mass and energy (see \eqref{decouple1} and \eqref{decouple2}).  In particular, recalling \eqref{threshold} and \eqref{still-below}, we have the following for any $0\leq J\leq J^*$
\begin{align}\label{equ:masdec}
\lim_{n\to\infty}\Big\{\sum_{j=1}^J  M(\phi_n^j)+M(r_n^J)\Big\}&=\lim_{n\to\infty}M(u_n)=M(Q)\\\label{equ:enedec}
\lim_{n\to \infty}\Big\{\sum_{j=1}^J E_V(\phi_n^j)+E_V(r_n^J)\Big\}&=\lim_{n\to\infty}E_V(u_n)=E(Q), \\
\limsup_{n\to\infty} \Big\{ \sum_{j=1}^J \| \phi_n^j\|_{\dot H_V^1}^2 + \|r_n^J\|_{\dot H_V^1}^2\bigr\}& =\limsup_{n\to\infty}\|u_n\|_{\dot H_V^1}^2  \leq \|Q\|_{\dot H^1}^2. \label{equ:stillbelow}
\end{align}
Using this, we can show that each $\phi_n^j$ satisfies the sub-threshold hypotheses appearing in Theorem~\ref{T:sub} below; in fact, there exists some $\delta>0$ so that 
\begin{equation}\label{sub-threshold-quant}
M(\phi_n^j)E_V(\phi_n^j)<M(Q)E_0(Q)-\delta
\end{equation}
for all $n$ large and $1\leq j \leq J$. The proof of this hinges on the fact that 
\begin{equation}\label{positive-energy}
\liminf_{n\to\infty} E_V(\phi_n^j)>0 
\end{equation}
for $1\leq j\leq J,$ which we can see as follows: First, if $t_n^j\to\pm\infty$, then the nonlinear part of the energy tends to zero as $n\to\infty$ (see e.g. \cite[Corollary~2.13]{KMVZ} and \cite[Corollary~5.2]{Hong}), which yields \eqref{positive-energy}.  Thus we may assume $t_n^j\equiv 0$.  Now, whether $|x_n^j|\to\infty$ or $x_n^j\equiv 0$, we may obtain
\[
\liminf_{n\to\infty} E_V(\phi_n^j)\geq \tfrac12\|\phi^j\|_{\dot H^1}^2 - \tfrac14\|\phi^j\|_{L^4}^4= E_0(\phi^j). 
\] 
As \eqref{equ:masdec} and \eqref{equ:stillbelow} imply $\|\phi^j\|_{L^2}\leq\|Q\|_{L^2}$ and $\|\phi^j\|_{\dot H^1}\leq \|Q\|_{\dot H^1}$, we may use the sharp Gagliardo--Nirenberg inequality \eqref{sharp-GN} (with $V=0$) to deduce
\[
E_0(\phi^j) \geq\tfrac12\|\phi^j\|_{\dot H^1}^2\{1-\tfrac12 C_0\|Q\|_{L^2}\|Q\|_{\dot H^1}\}\geq \tfrac16\|\phi^j\|_{\dot H^1}^2 
\]
(see e.g. \cite[(3.5)]{KMVZ}). 

To reach a contradiction under the dichotomy assumption, we will use the linear profiles $\phi_n^j$ to build approximate solutions to \eqref{nls}.

First consider the case $x_n^j\equiv 0$. If $t_n^j\equiv 0$,  then we appeal to Theorem~\ref{T:sub} and \eqref{sub-threshold-quant} to obtain a solution $v^j$ to \eqref{nls} satisfying $v^j(0)=\phi^j$ and obeying global space-time bounds (depending only on $\delta>0$ in \eqref{sub-threshold-quant} above). If instead $t_n^j\to\pm\infty$, then we let $v^j$ be the solution to \eqref{nls} that scatters to $e^{-itH}\phi^j$ as $t\to\pm\infty$.  As we can again verify \eqref{sub-threshold-quant}, we obtain global space-time bounds for the solution in this case as well. In either case, we define
\[
v_n^j(t,x)=v^j(t+t_n^j,x).
\]

Next, if $|x_n^j|\to\infty$, we appeal to Proposition~\ref{P:embedding} to obtain a solution $v_n^j$ to \eqref{nls} satisfying $v_n^j(0)=\phi_n^j$ and obeying global space-time bounds. 

We now define approximate solutions to \eqref{nls} by
\[
u_n^J(t,x) = \sum_{j=1}^J v_n^j(t,x)+ e^{-itH}r_n^J
\]
and immediately observe that by construction,
\begin{equation}\label{unJ-0-match}
\lim_{n\to\infty} \|u_n^J(0)- u_n\|_{H^1}=0 \qtq{for each}J.
\end{equation}
Our next goal is to show the following:
\begin{align}
&\limsup_{J\to J^*}\limsup_{n\to\infty}\bigl\{ \|u_n^J(0)\|_{H^1}+\|u_n^J\|_{L_{t,x}^5}\bigr\}\lesssim 1, \label{unJ-bounds}\\
&\limsup_{J\to J^*}\limsup_{n\to\infty} \bigl\{ |\nabla|^{\frac12}[(i\partial_t - H)u_n^J + |u_n^J|^2 u_n^J]\|_{L_t^{\frac{5}{3}}L_x^{\frac{30}{23}}}\bigr\} = 0,\label{unJ-error}
\end{align}
where space-time norms are taken over $\R\times\R^3$.  Once we have established \eqref{unJ-bounds} and \eqref{unJ-error}, the stability result Proposition~\ref{P:stab} implies that $u\in L_{t,x}^5(\R\times\R^3)$, contradicting \eqref{blowup} and hence ruling out the possibility of dichotomy.

Before turning to the proof of \eqref{unJ-bounds} and \eqref{unJ-error}, we observe that by the orthogonality of the parameters \eqref{orthogonal} and approximation by functions in $C_c^\infty(\R\times\R^3)$ (see \eqref{embed-cc} for the case $|x_n^j|\to\infty$), we may obtain the following:  for any $j\neq k$,
\begin{equation}\label{orthogonaljk}
\|v_n^j v_n^k\|_{L_{t,x}^{\frac52}}+ \|v_n^j |\nabla|^{\frac12}v_n^k\|_{L_t^{\frac52}L_x^{\frac{30}{17}}} \to 0 
\end{equation}
as $n\to\infty$.

\begin{proof}[Proof of \eqref{unJ-bounds}] First, the $H^1$ bound follows from \eqref{unJ-0-match}.  Next, using the $H^1$ decoupling and following the argument used to obtain \cite[(7.14)]{KMVZ}, we can show that
\[
\limsup_{J\to J^*}\limsup_{n\to\infty}\sum_{j=1}^J \|v_n^j\|_{S_V^1}^2\lesssim 1.
\]
Together with \eqref{orthogonaljk} and Strichartz, this implies the desired $L_{t,x}^5$-bounds.  \end{proof}

Let us also observe for later use that similar arguments yield the bounds 
\begin{equation}\label{more-unJ-bounds}
\limsup_{J\to J^*}\limsup_{n\to\infty} \| |\nabla|^{\frac12} u_n^J\|_{L_t^5 L_x^{\frac{30}{11}}}\lesssim 1. 
\end{equation}
\begin{proof}[Proof of \eqref{unJ-error}] We write $F(z)=-|z|^2 z$, so that we are led to estimate the following two terms:
\begin{align}
&\biggl\| |\nabla|^{\frac12}\biggl[\sum_{j=1}^J F(v_n^j)-F\bigl(\sum_{j=1}^J v_n^j\bigr)\biggr]\biggr\|_{L_t^{\frac53}L_x^{\frac{30}{23}}},\label{enJ1} \\
&\|\,|\nabla|^{\frac12}[F(u_n^J-e^{-itH}r_n^J)-F(u_n^J)]\|_{L_t^{\frac53}L_x^{\frac{30}{23}}}.\label{enJ2}
\end{align}

We begin with \eqref{enJ1} and observe that the expression
\[
\sum_{j=1}^J F(v_n^j)-F\bigl(\sum_{j=1}^J v_n^j\bigr) 
\]
may be written as a finite linear combination of terms of the form $v_n^j v_n^k v_n^\ell$ (up to complex conjugates), where not all of $j,k,\ell$ are equal.  The coefficients and total number of terms are $J$-dependent; however, this dependence becomes irrelevant once we establish 
\begin{equation}\label{paraproduct}
\lim_{n\to\infty} \| |\nabla|^{\frac12}[v_n^j v_n^k v_n^\ell]\|_{L_t^{\frac53}L_x^{\frac{30}{23}}}=0 \qtq{for all such}j,k,\ell.
\end{equation}
To prove \eqref{paraproduct}, we argue as in \cite{KM-cubic}.  That is, supposing $j\neq k$ (say), we first use the fractional product rule to bound this quantity by
\[
\|v_n^j v_n^k\|_{L_{t,x}^{\frac52}}\| |\nabla|^{\frac12}v_n^\ell\|_{L_t^5 L_x^{\frac{30}{11}}} + \| |\nabla|^{\frac12}[v_n^j v_n^k]\|_{L_{t,x}^{\frac52}L_x^{\frac{30}{17}}}\|v_n^\ell\|_{L_{t,x}^5}
\]
and observe that the first quantity is $o(1)$ as $n\to\infty$ by \eqref{orthogonaljk}. For the second quantity, we use the paraproduct estimate appearing in \cite[(3.6)]{KM-cubic} to write
\begin{align*}
\| & |\nabla|^{\frac12}[v_n^j v_n^k]\|_{L_t^{\frac52}L_x^{\frac{30}{17}}} \\
& \leq \|v_n^j |\nabla|^{\frac12}v_n^k\|_{L_t^{\frac52}L_x^{\frac{30}{17}}} + \| v_n^k |\nabla|^{\frac12}v_n^j\|_{L_t^{\frac52}L_x^{\frac{30}{17}}} + \|A (v_n^j) B(|\nabla|^{\frac12}v_n^k)\|_{L_t^{\frac52}L_x^{\frac{30}{17}}},
\end{align*}
where $A$ and $B$ are bounded sublinear operators that commute with translation. In particular, the first two terms are $o(1)$ as $n\to\infty$ by \eqref{orthogonaljk}, while the final term is $o(1)$ by the same arguments used to prove \eqref{orthogonaljk}. 

Finally, we turn to \eqref{enJ2}. For this term, we observe that
\[
F(u_n^J-e^{-itH}r_n^J)-F(u_n^J)
\]
is a sum of terms of the form $[e^{-itH}r_n^J]\star\,\star$ where $\star\in\{e^{-itH}r_n^J,u_n^J\}$ (up to complex conjugates). Thus, by the fractional product rule, \eqref{rnJ}, Strichartz, \eqref{unJ-bounds}, and \eqref{more-unJ-bounds}, we can obtain
\begin{align*}
\eqref{enJ2} & \lesssim \| e^{-itH} r_n^J\|_{L_{t,x}^5}\bigl(\|r_n^J\|_{\dot H^{\frac12}}+\|u_n^J\|_{L_{t,x}^5}\bigr)\bigl(\|r_n^J\|_{\dot H^{\frac12}} + \||\nabla|^{\frac12} u_n^J\|_{L_t^5 L_x^{\frac{30}{11}}}\bigr) \\
& \lesssim \|e^{-itH}r_n^J\|_{L_{t,x}^5}\to 0 \qtq{as}n\to\infty\qtq{and}J\to J^*. 
\end{align*} 
\end{proof}

Having ruled out vanishing and dichotomy, we are left with the conclusion that $J^*=1$ (`compactness'). In particular, our decomposition \eqref{equ:LPD-1} reduces to
\[
u_n=u(\tau_n) = \phi_n + r_n = e^{it_n H}[\phi(\cdot-x_n)]+ r_n.
\]
Using mass and energy decoupling (and positivity of the energy), we can guarantee that $r_n\to 0$ strongly in $H^1$. Indeed, the profile decomposition already converges weakly to zero in $H^1$, so if strong convergence fails then we may obtain that $\phi_n$ obeys the sub-threshold hypothesis \eqref{sub-threshold-quant}.  Then the same argument used to prevent dichotomy implies scattering for $u$ via the stability theory, contradicting \eqref{blowup}.  All that remains is to preclude the possibility that $t_n\to\pm\infty$.

For this, we suppose towards a contradiction that $t_n\to\infty$ (say). We then observe that
\[
\|e^{-itH}\phi_n\|_{L_{t,x}^5([0,\infty)\times\R^3)}  = \|e^{-itH}[\phi(\cdot-x_n)]\|_{L_{t,x}^5([t_n,\infty)\times\R^3)}\to 0
\]
as $n\to\infty$ by the monotone convergence theorem. Using this, we can verify that for all $n$ large, $e^{-itH}\phi_n$ defines a good approximate solution to \eqref{nls} that obeys $L_{t,x}^5$ bounds on $[0,\infty)$ and asymptotically matches $u_n$ in $H^1$. Applying Proposition~\ref{P:stab} once more, we therefore reach a contradiction to \eqref{blowup}. \end{proof}


\subsection{Aside: defining $x(t)$}\label{S:xt} The proof of Proposition~\ref{P:compact} shows that for any sequence $\{t_n\}\subset[0,\infty)$, there exist $x_n\in\R^3$ and nonzero $\psi\in H^1$ such that
\begin{equation}\label{apply-P:compact}
u(t_n,x+x_n)\to \psi \qtq{strongly in}H^1
\end{equation}
along a subsequence.  To deduce the existence of a function $x:[0,\infty)\to\R^3$ such that \eqref{compact-orbit} holds, one approach is as follows.  We first claim that there exists $c>0$ such that
\[
\sup_{x_0\in\R^3} \|u(t)\|_{L^2(B_1(x_0))}> c \qtq{for all}t\in[0,\infty). 
\]
If not, then we may find a sequence $t_n$ along which this supremum tends to zero.  Passing to a subsequence and choosing $x_n$ and $\psi$ as in \eqref{apply-P:compact}, we deduce that for any $x_0\in\R^3$,
\[
\|\psi\|_{L^2(B_1(x_0))}\leq \|\psi-u(t_n,\cdot+x_n)\|_{L^2}+\|u(t_n)\|_{L^2(B_1(x_0+x_n))}\to 0 \qtq{as}n\to\infty.
\]
This yields the contradiction $\psi\equiv 0$.  Thus we may define a function $x:[0,\infty)\times\R^3$ such that
\begin{equation}\label{x(t)-lower-bound-on-unit-ball}
\|u(t)\|_{L^2(B_1(x(t)))}\geq c \qtq{for all}t\in[0,\infty). 
\end{equation}
It remains to verify \eqref{compact-orbit}.  Again, we take an arbitrary sequence $\{t_n\}$ and choose $x_n$ and $\psi$ as in \eqref{apply-P:compact} to deduce
\[
u(t_n,x+x(t_n))=\psi(x+x(t_n)-x_n)+o(1) \qtq{in}H^1 
\]
along some subsequence. Thus either $u(t_n,x+x(t_n))$ converges strongly in $H^1$ (as desired) or converges weakly to zero. However, weak convergence to zero is incompatible with \eqref{x(t)-lower-bound-on-unit-ball}, and so we conclude that \eqref{compact-orbit} holds.

\section{Impossibility of compact solutions}\label{S:impossible}
Throughout this section, we suppose $V$ is a potential satisfying \eqref{V1}--\eqref{V2} or \eqref{V3} and that that $u(t)$ is a solution to \eqref{nls} as in Proposition~\ref{P:compact}.  In particular (by Corollary~\ref{C:global}), $u$ is global and uniformly bounded in $H^1$, but $u(t)$ does not scatter forward in time, and there exists $x_0:[0,\infty)\to\R^3$ such that 
\begin{equation}\label{u-is-compact-x0}
\{u(t,\cdot+x_0(t)):t\in[0,\infty)\}\qtq{is pre-compact in}H^1.
\end{equation}
We will derive a contradiction by showing that the spatial center can be neither bounded nor unbounded.  Roughly speaking, as long as the spatial center is bounded, the standard virial identity serves to push the solution away from the origin.  On the other hand, if the solution moves far from the origin, then we can show that $u(t)$ approaches the orbit of $Q$, that is, the set 
\[
\{e^{i\theta}Q(\cdot-y):\theta\in\R,y\in\R^3\}.
\]
In this case, we can apply a `modulated virial' argument that ultimately limits the motion of the solution.

The arguments concerning the spatial center are mediated by a quantity denoted by $\delta(t)$, which (as we will see) provides a measure of the distance between $u(t)$ and the orbit of $Q$.  In particular, we define the functional
\[
\delta(v) := \int |\nabla Q|^2 \,dx - \int |\nabla v|^2+V(x)|v|^2 \,dx 
\]
and write $\delta(t):=\delta(u(t))$.  Observe that as in the proof of Corollary~\ref{C:global}, we have
\begin{equation}\label{delta-is-always-positive}
\delta(t)>0 \qtq{for all}t\in[0,\infty),
\end{equation}
although without any quantitative lower bound.

The relationship between the spatial center of $u$ and the quantity $\delta(t)$ is (roughly) that the spatial center becomes large if and only if $\delta(t)$ becomes small.  Therefore, it will be essential to understand the regime in which $\delta(t)$ is small.  Given a parameter $\delta_0>0$ (which will be taken sufficiently small at several points in the argument), we define the set
\[
I_0 = \{t\in[0,\infty):\delta(t)<\delta_0\},
\]
which (by continuity of the flow in $H^1$) is relatively open in $[0,\infty)$.  We then have the following description of $u(t)$ on the set $I_0$.

\begin{proposition}[Modulation]\label{P:modulation} The following holds for any $\delta_0>0$ sufficiently small: There exist $\theta:I_0\to\R$ and $y:I_0\to\R^3$ so that $u(t)$ admits the decomposition
\begin{equation}\label{modulation-identity}
u(t) = e^{i\theta(t)}[Q(x-y(t))+g(t)] \qtq{for}t\in I_0,
\end{equation}
with
\begin{equation}\label{modulation-bounds}
\frac{e^{-2|y(t)|}}{|y(t)|^2}+ |\dot y(t)| + \biggl[\int V(x)|u(t,x)|^2\,dx\biggr]^{\frac12} \lesssim \delta(t) \sim \|g(t)\|_{H^1}
\end{equation}
for all $t\in I_0.$
\end{proposition}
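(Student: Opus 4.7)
The plan is to show, via the sharp Gagliardo--Nirenberg inequality, that $u(t)$ lies close to the orbit of $Q$ whenever $\delta(t)$ is small, and then to exploit a coercive expansion of the action $S_V := E_V + M$ around the translated ground state $Q(\cdot - y)$, driven by the threshold identity $S_V(u) = S_V(Q)$, to extract the quantitative bounds.

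The first step is to show that $\delta(t)\to 0$ forces $u(t)$ into $H^1$-neighborhoods of the orbit $\{e^{i\theta}Q(\cdot-y)\}$. The threshold identity gives $\|u\|_{L^4}^4 = \|Q\|_{L^4}^4 - 2\delta$, which combined with the sharp Gagliardo--Nirenberg bound $\|u\|_{L^4}^4 \le (\|u\|_{\dot H^1}/\|Q\|_{\dot H^1})^3 \|Q\|_{L^4}^4$ forces $\|u\|_{\dot H^1}\to \|Q\|_{\dot H^1}$, $\|u\|_{L^4}\to\|Q\|_{L^4}$, and $\int V|u|^2\to 0$ as $\delta\to 0$. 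Concentration-compactness together with uniqueness of the Gagliardo--Nirenberg optimizer (Lemma~\ref{no-optimizers}) then produces $\theta_t, y_t$ with $\|u - e^{i\theta_t}Q(\cdot - y_t)\|_{H^1}\to 0$, and since $V$ is nontrivial while $\int V|u|^2\to 0$, one must have $|y_t|\to\infty$ in both potential classes. A standard implicit function theorem argument upgrades this to smooth modulation parameters $\theta(t), y(t)$ on $I_0$ satisfying the $L^2$-orthogonality conditions $\langle g(t), iQ(\cdot-y(t))\rangle = 0$ and $\langle g(t), \nabla Q(\cdot-y(t))\rangle = 0$, where $g(t) := e^{-i\theta(t)}u(t) - Q(\cdot-y(t))$.

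The technical heart of the proof is the coercive expansion. Using $S_V(u) = S_V(Q)$ together with the translation invariance of $S_0 := E_0 + M$ gives $S_V(u) - S_V(Q(\cdot - y)) = -\tfrac{1}{2}\int V Q^2(\cdot-y)$. Taylor-expanding $S_V(Q(\cdot-y) + g)$ around $Q(\cdot-y)$ produces a genuine linear-in-$g$ term $\Re\langle VQ(\cdot-y), g\rangle$ --- a feature absent from the $V\equiv 0$ threshold analysis of \cite{DR} --- yielding
\[
\tfrac{1}{2}\langle L_V g, g\rangle + \Re\langle VQ(\cdot-y), g\rangle + \tfrac{1}{2}\int V Q^2(\cdot-y) + N_0(g) = 0,
\]
where $L_V = L_0 + V$ with $L_0$ the standard linearization of the cubic NLS at $Q(\cdot-y)$ and $N_0(g)=O(\|g\|_{H^1}^3)$. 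Weinstein-type coercivity of $L_0$ on the orthogonal complement of its generalized kernel, using the modulation orthogonalities together with the mass constraint $\Re\langle Q(\cdot-y), g\rangle = -\tfrac{1}{2}\|g\|_{L^2}^2$, gives $\langle L_V g, g\rangle \ge c\|g\|_{H^1}^2 - O(\|g\|^4)$. Estimating $|\Re\langle V Q(\cdot-y), g\rangle| \le \bigl(\int V Q^2(\cdot-y)\bigr)^{1/2}\bigl(\int V|g|^2\bigr)^{1/2}$ with $\int V|g|^2 \lesssim \|g\|_{\dot H^1}^2$ (via Sobolev in case \eqref{V1}--\eqref{V2} and Hardy in case \eqref{V3}) and absorbing the cross term by Young's inequality produces $\|g\|_{H^1}^2 + \int V|u|^2 \lesssim \int V Q^2(\cdot-y)$. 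The comparison $\delta \sim \|g\|_{H^1}$ then follows by matching $\|u\|_{L^4}^4 = \|Q\|_{L^4}^4 - 2\delta$ against the Taylor expansion of $\|Q(\cdot-y) + g\|_{L^4}^4$ in $g$: the linear term $4\int Q^3(\cdot-y)\,\Re g$ produces $\delta$ up to quadratic corrections already controlled.

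The remaining bounds follow. In both potential classes $\int V Q^2(\cdot-y) \gtrsim e^{-2|y|}/|y|^2$, owing to positivity of $V$ on a fixed neighborhood of the origin and the asymptotics $Q(x) \sim e^{-|x|}/|x|$ as $|x|\to\infty$; combined with the principal estimate this gives $e^{-2|y|}/|y|^2 \lesssim \delta^2 \le \delta$ and $\sqrt{\int V|u|^2}\lesssim \delta$. For $|\dot y|$, I would differentiate the orthogonality condition $\langle g(t), \nabla Q(\cdot-y(t))\rangle = 0$ in time, substitute the evolution equation for $g$ derived from \eqref{nls}, and solve the resulting $4\times 4$ linear system for $(\dot\theta, \dot y)$; the forcing is bounded by $\|g\|_{H^1} + \sqrt{\int V|u|^2}$, hence by $\delta$. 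The principal obstacle is the coercivity step: because of the linear-in-$g$ potential term, naively using $\int V|u|^2 \ge 0$ and discarding the cross term would give $\langle L_0 g, g\rangle \le O(\|g\|^3)$, which together with coercivity is inconsistent for small $\|g\|$; the resolution is to regroup $\int V|g|^2$ with $\langle L_0 g, g\rangle$ to form the coercive quadratic $\langle L_V g, g\rangle$ on the left, leaving $\int V Q^2(\cdot-y)$ as the effective forcing on the right.
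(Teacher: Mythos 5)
Your overall architecture matches the paper's: sharp Gagliardo--Nirenberg to get non-quantitative proximity to the orbit of $Q$, an implicit function theorem for the modulation parameters and the orthogonality conditions $\langle g_2,Q(\cdot-y)\rangle=\langle g_1,\partial_j Q(\cdot-y)\rangle=0$, a Lyapunov functional $E_V+M$ expanded about $Q(\cdot-y)$, and the mass constraint $\langle Q(\cdot-y),g_1\rangle=-\tfrac12\|g\|_{L^2}^2$ used to close the estimates. The bounds on $e^{-2|y|}/|y|^2$ and on $|\dot y|$ are handled in essentially the same way as the paper (positivity of $V$ on a fixed set plus the exponential asymptotics of $Q$; differentiation of the orthogonality conditions with the evolution equation for $g$).

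The gap is in the coercivity step, and it is a real one. You assert that Weinstein-type coercivity of $L_0$, given only the four modulation orthogonality conditions plus the mass constraint, yields $\langle L_V g,g\rangle\gtrsim\|g\|_{H^1}^2$. This is false for the $3d$ cubic NLS. The cubic equation in three dimensions is mass-supercritical: here $\frac{d}{d\omega}\|Q_\omega\|_{L^2}^2<0$, equivalently $\langle L_+^{-1}Q,Q\rangle>0$, so $L_+$ is \emph{not} nonnegative on $\{Q\}^\perp\cap\{\nabla Q\}^\perp$ (this is precisely the mechanism behind instability of the soliton). Adding $\int V|g|^2\geq0$ to form $L_V$ does not rescue this, since $g$ is concentrated near $y(t)$ with $|y(t)|\to\infty$, where $V$ is negligible, so $\int V|g|^2$ can be an arbitrarily small multiple of $\|g\|_{\dot H^1}^2$. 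The paper explicitly records this obstruction (``Evidently, this is impossible. ... the subspaces on which $B(\cdot,\cdot)$ are known to be positive involve at least five constraints, whereas $(\theta,y)\in\R^4$ only afford us four''). The coercivity result being invoked (\cite[Proposition~2.7]{DR}) requires the \emph{fifth} orthogonality condition $g_1\perp\Delta Q$, which the modulation parameters cannot impose.

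The correct resolution — the one the paper and \cite{DR} use — is to further decompose $g=\alpha Q(\cdot-y)+h$ with $\alpha=\langle g_1(\cdot+y),\Delta Q\rangle/\langle Q,\Delta Q\rangle$, so that $h$ satisfies all five orthogonality conditions and $B(h(\cdot+y),h(\cdot+y))\gtrsim\|h\|_{H^1}^2$. One then uses the mass constraint to estimate $\langle Q(\cdot-y),h_1\rangle$ in terms of $\alpha$ and $\|h\|$, feeds this back into the expansion of the Lyapunov functional to get $\|h\|_{H^1}^2+\int V|u|^2\lesssim\alpha^2$, uses the mass constraint again to get $|\alpha|\lesssim\|h\|_{H^1}$ (hence $|\alpha|\sim\|h\|_{H^1}\sim\|g\|_{H^1}$), and finally expands $\delta(t)$ to read off $\delta\sim|\alpha|$. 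Without this extra decomposition and the explicit bookkeeping of $\alpha$, your argument cannot produce either the coercive lower bound or the two-sided comparison $\delta\sim\|g\|_{H^1}$; in particular the closing step you sketch (matching the Taylor expansion of $\|Q(\cdot-y)+g\|_{L^4}^4$) only gives $\delta\lesssim\|g\|_{H^1}$, not the reverse.
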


While this proposition is crucial for the arguments below, the proof of this result is essentially independent from the rest of the analysis, and so we postpone it until Section~\ref{S:modulation} below.  For now, let us deal with the small annoyance that we presently have two spatial centers parametrizing our solution, namely, $x_0(t)$ for $t\in[0,\infty)$ and $y(t)$ for $t\in I_0$. 

\begin{lemma}\label{L:xvsy} Suppose $\delta_0=\delta_0(u)$ is sufficiently small.  Then there exists $C>0$ such that for all $t\in I_0$, $|x_0(t)-y(t)|<C$. 
\end{lemma}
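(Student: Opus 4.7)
The plan is a proof by contradiction built around the contrast between the strong $H^1$ precompactness of the orbit $\{u(t,\cdot+x_0(t))\}$ and the weak decoupling of distant translates of $Q$. Suppose, towards a contradiction, that no such constant $C$ exists: then there is a sequence $t_n\in I_0$ with $|x_0(t_n)-y(t_n)|\to\infty$. By the compactness \eqref{u-is-compact-x0}, I can pass to a subsequence (still labeled $t_n$) along which $u(t_n,\cdot+x_0(t_n))\to\psi$ strongly in $H^1$ for some $\psi\in H^1$. Since $\|u(t_n)\|_{L^2}^2=2M(u_0)=2M(Q)=\|Q\|_{L^2}^2$ by mass conservation and \eqref{threshold}, strong $L^2$ convergence forces $\|\psi\|_{L^2}=\|Q\|_{L^2}$; in particular $\psi$ is nontrivial of a definite $L^2$-size.

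Next I insert the modulation identity \eqref{modulation-identity}: setting $z_n:=x_0(t_n)-y(t_n)$ and $h_n(x):=e^{i\theta(t_n)}g(t_n,x+x_0(t_n))$, I have
\[
u(t_n,x+x_0(t_n)) \;=\; e^{i\theta(t_n)}Q(x+z_n) \;+\; h_n(x),
\]
with $\|h_n\|_{H^1}=\|g(t_n)\|_{H^1}\lesssim\delta_0$ by \eqref{modulation-bounds}. Passing to a further subsequence I may assume $\theta(t_n)\to\theta_\infty\pmod{2\pi}$ and, by weak compactness of the bounded family $\{h_n\}$ in $H^1$, $h_n\rightharpoonup h$ weakly in $H^1$ with $\|h\|_{H^1}\lesssim\delta_0$.

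Taking the weak $H^1$-limit in the displayed identity delivers the contradiction. Because $|z_n|\to\infty$, the translates $Q(\cdot+z_n)$ tend to zero weakly in $H^1$, and together with convergence of the phases this gives $e^{i\theta(t_n)}Q(\cdot+z_n)\rightharpoonup 0$. The left-hand side converges strongly, hence weakly, to $\psi$, so uniqueness of weak limits yields $\psi=h$; therefore $\|\psi\|_{L^2}\le\|h\|_{H^1}\lesssim\delta_0$. Combined with $\|\psi\|_{L^2}=\|Q\|_{L^2}$, this is impossible once $\delta_0$ is chosen smaller than a constant depending only on $\|Q\|_{L^2}$ and the implicit constant in \eqref{modulation-bounds}. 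I do not foresee a genuine obstacle: all the hard content is packaged in Proposition~\ref{P:compact} (precompactness modulo $x_0$), Proposition~\ref{P:modulation} (the decomposition with an $H^1$-small remainder), and mass conservation; the remaining point to watch is merely that every subsequential limit exists, which is automatic from the $H^1$-boundedness of $h_n$ and compactness of the phase circle.
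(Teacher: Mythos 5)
Your proof is correct, and it runs on the same mechanism as the paper's: the modulation decomposition splits $u(t_n,\cdot+x_0(t_n))$ into a translate of $Q$ that disappears when $|x_0(t_n)-y(t_n)|\to\infty$ and a remainder of size $O(\delta_0)$, which is incompatible with the nontrivial mass concentrated near $x_0(t_n)$. The paper reaches the contradiction a bit more directly, using only the local mass lower bound $\|u(t)\|_{L^2(B_1(x_0(t)))}\geq c$ rather than extracting a strongly convergent subsequence and passing to weak limits; the content is the same and either route works.
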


\begin{proof} Suppose towards a contradiction that $|x_0(t_n)-y(t_n)|\to\infty$ for some sequence of $t_n\in I_0$.  By compactness (see e.g. \eqref{x(t)-lower-bound-on-unit-ball} above) and Proposition~\ref{P:modulation}, there exists $c=c(u)>0$ so that
\begin{align*}
c& <\int_{|x-x_0(t_n)|\leq 1} |u(t_n,x)|^2 \,dx \\
&  \lesssim \int_{|x-[x_0(t_n)-y(t_n)]|\leq 1} |Q(x)|^2\,dx` + \int_{|x-x_0(t_n)|\leq 1} |g(t_n,x)|^2\,dx \\
& \lesssim \int |g(t_n,x)|^2\,dx + o(1) \qtq{as}n\to\infty. 
\end{align*}
Thus, by \eqref{modulation-bounds} in Proposition~\ref{P:modulation}, we have
\[
\tfrac12 c<\|g(t_n)\|_{L^2}^2 \lesssim [\delta(t_n)]^2 \lesssim \delta_0^2
\]
for all large $n$, and hence we deduce a contradiction provided $\delta_0=\delta_0(c)$ is small enough. \end{proof}

We may therefore define a new spatial center $x:[0,\infty)\to\R^3$ by setting
\begin{equation}\label{def:x(t)}
x(t) = \begin{cases} x_0(t) & t\in[0,\infty)\backslash I_0, \\ y(t) & t\in I_0.\end{cases}
\end{equation}
In light of Lemma~\ref{L:xvsy}, we maintain the compactness condition \eqref{u-is-compact-x0}.  That is,
\begin{equation}\label{u-is-compact}
\{u(t,\cdot+x(t)):t\in[0,\infty)\} \qtq{is pre-compact in}H^1. 
\end{equation} 
In Sections~\ref{BtoUB} and \ref{UBtoB}, we will obtain our contradiction by showing that $x(t)$ may be neither bounded nor unbounded.  Before this, however, let us collect a few lemmas that elucidate the relationships between the various quantities that have been introduced above.

\subsection{Dramatis personae}\label{S:DP}  In this section, we seek to clarify the relationships between the main characters appearing in the analysis below, namely, the spatial center $x(t)$; the `distance' $\delta(t)$; the potential part of the energy
\[
\int_{\R^3} V(x)|u(t,x)|^2\,dx;
\]
and the standard virial quantity
\[
F_\infty^0[u(t)]=\int_{\R^3} |\nabla u(t,x)|^2 - \tfrac34|u(t,x)|^4\,dx. 
\]

We begin with the following lemma.

\begin{lemma}\label{xt-vs-potential} For any sequence $\{t_n\}\subset[0,\infty)$, we have
\[
|x(t_n)|\to\infty \qtq{if and only if} \int V(x)|u(t_n,x)|^2\,dx \to 0. 
\]
\end{lemma}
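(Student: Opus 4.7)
My plan is to use the compactness hypothesis \eqref{u-is-compact} to reduce each implication to a statement about a strong $H^1$-limit of a subsequence of $\{u(t_n)\}$, and then to exploit the structure of $V$ (and in the hard case the threshold classification from \cite{DR}).

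For the forward implication, I would pass to a subsequence along which $u(t_n,\cdot+x(t_n))\to\phi$ in $H^1$, so that $u(t_n)=\phi(\cdot-x(t_n))+r_n$ with $\|r_n\|_{H^1}\to 0$. Cauchy--Schwarz combined with H\"older (using $V\in L^{3/2}$ in the case \eqref{V1}--\eqref{V2}) or Hardy's inequality (in the case \eqref{V3}) absorbs the cross and $|r_n|^2$ terms, so it suffices to prove
\[
\int V(y+x(t_n))\,|\phi(y)|^2\,dy \to 0.
\]
In case \eqref{V1}--\eqref{V2}, I approximate $V$ in $L^{3/2}$ by $V_k\in C_c(\R^3)$: the compactly supported piece vanishes for all large $n$ by disjointness of supports, while the tail is controlled uniformly in $n$ by $\|V-V_k\|_{L^{3/2}}\|\phi\|_{L^6}^2$. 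In case \eqref{V3}, I split at $|y|=|x(t_n)|/2$: the inner region contributes $O(|x(t_n)|^{-2})\|\phi\|_{L^2}^2$, and the outer region is handled via Hardy applied to a smoothly truncated $\phi$ whose $\dot H^1$ mass tends to zero.

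For the reverse implication I argue by contrapositive: suppose $|x(t_n)|\le R$ along a subsequence, and refine so that $x(t_n)\to x_*\in\R^3$. Then \eqref{u-is-compact} forces $u(t_n)\to\psi$ strongly in $H^1$ for some $\psi\in H^1$, and \eqref{x(t)-lower-bound-on-unit-ball} gives $\|\psi\|_{L^2(B_1(x_*))}\ge c>0$, so $\psi\not\equiv 0$. The same H\"older/Hardy estimates as above provide continuity of $f\mapsto \int V|f|^2$ under $H^1$-strong convergence, so the hypothesis $\int V|u(t_n)|^2\to 0$ passes to $\int V|\psi|^2=0$, and since $V\ge 0$ this forces $V|\psi|^2=0$ almost everywhere. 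In case \eqref{V3}, $V>0$ everywhere, giving $\psi\equiv 0$, a contradiction. In case \eqref{V1}--\eqref{V2}, conservation of mass and energy together with $\int V|\psi|^2=0$ and the lower semicontinuity of \eqref{still-below} give
\[
M(\psi)=M(Q),\quad E_0(\psi)=E_V(\psi)=E_0(Q),\quad \|\psi\|_{\dot H^1}=\|\psi\|_{\dot H_V^1}\le\|Q\|_{\dot H^1}.
\]
Thus $\psi$ sits precisely at the sharp mass-energy threshold for $(\text{NLS}_0)$, and the Duyckaerts--Roudenko classification of threshold data in \cite{DR} forces $\psi$, up to phase and translation, to coincide with $Q$ itself or with the initial data of a special threshold soliton, each of which is nonvanishing almost everywhere. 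Since $V$ is nonnegative and nontrivial, this contradicts $V|\psi|^2=0$ a.e.

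The hard part is the reverse implication in case \eqref{V1}--\eqref{V2}: because $V$ may vanish on sets of positive measure, $V|\psi|^2=0$ a.e. alone does not force $\psi\equiv 0$, so one cannot conclude directly as in the inverse-square case. The rescue is that the strong-$H^1$ limit $\psi$ inherits enough from conservation of mass and energy to sit exactly at the $(\text{NLS}_0)$ threshold; the rigidity from \cite{DR} — that such threshold data are nonvanishing a.e. — then supplies the missing ingredient to contradict $V|\psi|^2=0$ a.e. whenever $V$ is nontrivial.
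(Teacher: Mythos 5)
Your forward direction is essentially the paper's argument: both use compactness to localize around $x(t_n)$, control the tail by the boundedness of $f\mapsto\int V|f|^2\,dx$ on $H^1$ (H\"older with $V\in L^{3/2}$, or Hardy), and send the local piece to zero as $|x(t_n)|\to\infty$. That part is fine.

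In the reverse direction you begin as the paper does, and you are right to flag that for a general nonnegative $V$ satisfying \eqref{V1}--\eqref{V2} the identity $\int V|\psi|^2\,dx=0$ does not by itself force $\psi\equiv 0$ (the paper's ``we deduce the contradiction'' is immediate only when $V>0$ a.e., as for the inverse-square potential). However, the rescue you propose is incorrect. The Duyckaerts--Roudenko theorem classifies the asymptotic \emph{dynamics} of $(\text{NLS}_0)$ solutions with threshold data below the kinetic bound, not the \emph{profiles} of the data themselves. Under $M(\psi)E_0(\psi)=M(Q)E_0(Q)$ and $\|\psi\|_{L^2}\|\psi\|_{\dot H^1}<\|Q\|_{L^2}\|Q\|_{\dot H^1}$, the dichotomy is: either the $(\text{NLS}_0)$-evolution of $\psi$ scatters in both time directions, or it coincides up to symmetries with the special solution $Q^-$. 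The scattering alternative covers all threshold data outside a finite-parameter family, and such data carry no pointwise rigidity whatsoever --- they can perfectly well vanish on open sets. A pointwise description of the data itself is only available at the endpoint $\|\psi\|_{\dot H^1}=\|Q\|_{\dot H^1}$, where the sharp Gagliardo--Nirenberg characterization forces $\psi=e^{i\theta}Q(\cdot-a)$; but passing \eqref{still-below} to the limit only gives $\|\psi\|_{\dot H^1}\le\|Q\|_{\dot H^1}$, and you cannot exclude strict inequality. So ``$\psi$ at threshold $\Rightarrow$ $\psi\ne 0$ a.e.'' is false, and your contradiction with $V|\psi|^2=0$ a.e. does not follow. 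If you want to bring DR to bear here, it must be through a dynamical stability argument in the spirit of Lemma~\ref{ximpliesdeltaimpliesx} (build an approximate $(\text{NLS}_0)$ solution near $\psi$, use the DR scattering alternative to contradict the blow-up of the $L_{t,x}^5$ norm), but the approximation there is carried out precisely in the regime $|x(t_n)|\to\infty$, which is the hypothesis you are trying to contradict.
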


\begin{proof} First suppose $|x(t_n)|\to\infty$ and let $\eps>0$. We first claim that using compactness (i.e. \eqref{u-is-compact}), we may find $C_\eps$ sufficiently large that
\[
\int_{|x-x(t_n)|>C_\eps}V(x)|u(t_n,x)|^2\,dx < \eps \qtq{for all}n. 
\]
In fact, this follows from the continuity of the embedding $H^1\hookrightarrow L^2(\sqrt{V}\,dx)$.  In particular, if $V=a|x|^{-2}$, this follows from Hardy's inequality, while if $V$ satisfies \eqref{V1}--\eqref{V2}, this follows from the fact that $V\in L^{\frac32}$, H\"older's inequality, and Sobolev embedding.  On the other hand, the integral over the region $|x-x(t_n)|\leq C_\eps$ tends to zero by the dominated convergence theorem.
%

Next we suppose that
\begin{equation}\label{Vutnto0}
\int V(x)|u(t_n,x)|^2\,dx \to 0
\end{equation}
but $x(t_n)$ converges along some subsequence. It follows from compactness (i.e. \eqref{u-is-compact}) that $u(t_n)$ converges strongly in $H^1$ along a further sequence to some nonzero limit $\phi\in H^1$.  Using \eqref{Vutnto0}, however, we deduce the contradiction
\[
\int V(x) |\phi(x)|^2\,dx = 0.
\]
\end{proof}

We next establish the following lemma connecting $x(t)$ and $\delta(t)$.  For this result, we rely on the fact that the standard NLS is essentially the `limit' of \eqref{nls} in the regime $|x|\to\infty$.  In particular, we rely on the results of \cite{DR} concerning threshold behaviors for the cubic NLS and the arguments of \cite{Hong, KMVZ} concerning the approximation of \eqref{nls} by ($\text{NLS}_0$) in the regime $|x|\to\infty$. 

\begin{lemma}\label{ximpliesdeltaimpliesx} Let $t_n\to\infty$.  Then
\[
|x(t_n)|\to\infty \qtq{if and only if} \delta(t_n)\to 0. 
\]
\end{lemma}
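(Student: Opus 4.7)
The proof handles each implication separately.

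\emph{Forward direction} $(\delta(t_n) \to 0 \Rightarrow |x(t_n)| \to \infty)$. The hypothesis together with conservation of mass and energy yields $\|u(t_n)\|_{\dot H_V^1}^2\to\|Q\|_{\dot H^1}^2$ and, via the energy identity, $\|u(t_n)\|_{L^4}^4\to\|Q\|_{L^4}^4$, while $\|u(t_n)\|_{L^2}=\|Q\|_{L^2}$. If $|x(t_n)|$ fails to diverge, pass to a subsequence with $x(t_n)\to x_\infty\in\R^3$; the compactness \eqref{u-is-compact} then provides (along a further subsequence) a strong $H^1$-limit $\psi$ of $u(t_n)$ itself, with $\|\psi\|_{L^2}=\|Q\|_{L^2}$, $\|\psi\|_{L^4}^4=\|Q\|_{L^4}^4$, and $\|\psi\|_{\dot H_V^1}^2=\|Q\|_{\dot H^1}^2$. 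Such $\psi$ would attain equality in \eqref{sharp-GN} (using $C_V=C_0$), contradicting Lemma~\ref{no-optimizers}.

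\emph{Reverse direction} $(|x(t_n)|\to\infty \Rightarrow \delta(t_n)\to 0)$. Argue by contradiction and assume $\delta(t_n)\geq c>0$ along a subsequence. Using the compactness \eqref{u-is-compact}, extract $u(t_n,\cdot+x(t_n))\to\phi$ in $H^1$. Lemma~\ref{xt-vs-potential} gives $\int V|u(t_n)|^2\,dx\to 0$, so
\[
M(\phi)=M(Q),\qquad E_0(\phi)=E_0(Q),\qquad \|\phi\|_{\dot H^1}^2=\|Q\|_{\dot H^1}^2-\lim_{n\to\infty}\delta(t_n)\leq\|Q\|_{\dot H^1}^2-c.
\]
Hence $\phi$ sits at the mass-energy threshold for \eqref{nls0} with strictly sub-threshold kinetic energy. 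Let $v$ denote the \eqref{nls0}-evolution of $\phi$; the classification result of \cite{DR} forces $v$ either to scatter in both time directions or to coincide, up to the symmetries of \eqref{nls0}, with one of the special solutions $Q^{\pm}$. In every case $v$ scatters in at least one time direction, yielding a half-line $I_\sigma\in\{[0,\infty),\,(-\infty,0]\}$ on which $\|v\|_{L_{t,x}^5(I_\sigma\times\R^3)}<\infty$.

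Set $\tilde v_n(t,x):=v(t,x-x(t_n))$, so that $\tilde v_n$ solves \eqref{nls} up to the error $e_n=-V\tilde v_n$ and $\|\tilde v_n(0)-u(t_n)\|_{H^1}=\|\phi-u(t_n,\cdot+x(t_n))\|_{H^1}\to 0$ by construction. Following the proof of Proposition~\ref{P:embedding} in the case of \eqref{V1}--\eqref{V2} (and the analogous argument in \cite[Theorem~6.1]{KMVZ} for \eqref{V3}), the divergence $|x(t_n)|\to\infty$ together with the $L^{3/2}$ tail decay (respectively the Hardy structure) of $V$ and a density/cutoff approximation of $v$ gives $\|e_n\|_{\dot N^{1/2}(I_\sigma)}\to 0$. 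Proposition~\ref{P:stab} then produces $\|u(\cdot+t_n)\|_{L_{t,x}^5(I_\sigma)}\lesssim 1$ uniformly for all $n$ sufficiently large. When $I_\sigma=[0,\infty)$, combining with the local $L_{t,x}^5$-bound on $[0,t_n]$ afforded by LWP gives $\|u\|_{L_{t,x}^5([0,\infty))}<\infty$; when $I_\sigma=(-\infty,0]$, one has $\|u\|_{L_{t,x}^5((-\infty,t_n])}\lesssim 1$ uniformly, and since $t_n\to\infty$ monotone convergence forces $\|u\|_{L_{t,x}^5(\R)}<\infty$. In either case we contradict \eqref{blowup}.

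The main obstacle is the uniform error estimate $\|V\tilde v_n\|_{\dot N^{1/2}(I_\sigma)}\to 0$ across both potential classes, which is exactly where the approximation-by-\eqref{nls0} technology of \cite{Hong,KMVZ} is needed. A minor but essential piece of bookkeeping is tracking which direction of scattering the \cite{DR} classification supplies, since the resulting contradiction with \eqref{blowup} is obtained in one step when $\sigma=+$ and requires the monotone-convergence step when $\sigma=-$.
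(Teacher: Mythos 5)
Your reverse direction follows the paper's strategy exactly: extract the strong limit $\phi$ of $u(t_n,\cdot+x(t_n))$ via compactness, use Lemma~\ref{xt-vs-potential} to see the potential energy vanishes so that $\phi$ sits at the $(\text{NLS}_0)$ mass--energy threshold with strictly sub-threshold kinetic energy, invoke the \cite{DR} classification to obtain scattering in at least one time direction, and transfer this to $u$ by the approximation-and-stability machinery of Proposition~\ref{P:embedding} and \cite[Theorem~6.1]{KMVZ}, contradicting \eqref{blowup}. (One small slip: at sub-threshold kinetic energy, the \cite{DR} alternative is scattering in both directions or coincidence with $Q^{-}$ only; $Q^{+}$ lives in the super-threshold regime. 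This does not affect your conclusion, since $Q^{-}$ still scatters in one time direction.)

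Your forward direction, however, takes a genuinely different route from the paper. The paper invokes the modulation bound of Proposition~\ref{P:modulation} to deduce $\int V|u(t_n)|^2\,dx\lesssim\delta(t_n)^2\to 0$ and then concludes via Lemma~\ref{xt-vs-potential}. You instead suppose $x(t_n)$ has a bounded subsequence, extract a strong $H^1$ limit $\psi$ of $u(t_n)$ itself via compactness and continuity of translation, and observe that $\delta(t_n)\to 0$ together with the conservation laws forces $\|\psi\|_{L^2}=\|Q\|_{L^2}$, $\|\psi\|_{L^4}^4=\|Q\|_{L^4}^4$, and $\|\psi\|_{\dot H_V^1}^2=\|Q\|_{\dot H^1}^2$, so that $\psi$ attains equality in \eqref{sharp-GN} (with $C_V=C_0$), contradicting Lemma~\ref{no-optimizers}. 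This is a clean, self-contained alternative: it rests only on the early variational input (Lemma~\ref{no-optimizers}) and avoids routing through the quantitative modulation estimate, whose proof is deferred to Section~\ref{S:modulation}. The underlying mechanism is of course the same as that driving Lemma~\ref{L:mod-nq} — vanishing $\delta$ means an optimizing sequence for sharp Gagliardo--Nirenberg, which must escape to spatial infinity — but you realize the contradiction directly as an attained optimizer rather than via a modulation-theoretic bound on the potential energy.
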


\begin{proof} If $\delta(t_n)\to 0$, then by Proposition~\ref{P:modulation} we obtain 
\[
\int V(x)|u(t_n,x)|^2\,dx\to 0,
\]
and hence $|x(t_n)|\to \infty$ by Lemma~\ref{xt-vs-potential} above.

We next suppose towards a contradiction that $|x(t_n)|\to\infty$ for some $t_n\to\infty$, but 
\begin{equation}\label{delta-bdd-below}
\delta(u(t_n))\geq c>0
\end{equation}
along some subsequence in $n$.  Note that by Lemma~\ref{xt-vs-potential} above, we have 
\begin{equation}\label{potential-to-zero}
\int_{\R^3}V(x)|u(t_n,x)|^2\,dx \to 0. 
\end{equation}

Using compactness (i.e. \eqref{u-is-compact}), we may now find $v_0\in H^1$ so that
\begin{equation}\label{CMT}
u(t_n,\cdot+x(t_n))\to v_0 \qtq{in}H^1
\end{equation}
along a further subsequence.  Using $H^1$ convergence, \eqref{delta-bdd-below}, and \eqref{potential-to-zero}, we deduce that
\[
M(v_0)=M(Q),\quad E_0(v_0)=E_0(Q),\qtq{and} \int |\nabla Q|^2\,dx  > \int |\nabla v_0|^2\,dx.
\]
Thus, by the main result of \cite{DR}, the solution $v$ to $(\text{NLS}_0)$ with initial data $v_0$ is global and either scatters as $t\to\infty$ or as $t\to-\infty$ (or both).  In either scenario, we will derive a contradiction to the fact that $u$ has infinite $L_{t,x}^5$-norm on $[0,\infty)\times\R^3$.  

\emph{Case 1.} First suppose $V$ satisfies \eqref{V1}--\eqref{V2}, and suppose that $v$ scatters as $t\to\infty$.  Arguing as in Proposition~\ref{P:embedding}, we can show that
\[
\tilde v_n(t,x) := v(t,x-x(t_n))
\]
defines an approximate solution to \eqref{nls} on $[0,\infty)$.  Indeed, we have that
\[
i\partial_t \tilde v_n+ \Delta \tilde v_n - V\tilde v_n + |\tilde v_n|^2 \tilde v_n = -V\tilde v_n,
\]
and we can show that
\[
\lim_{n\to\infty} \|V\tilde v_n\|_{L_t^4 L_x^{\frac65}([0,\infty)\times\R^3)} = 0
\]
just as we proved \eqref{444error}. As \eqref{CMT} yields
\[
\|u(t_n,x)-\tilde v_n(0)\|_{H^1} \to 0,
\]
an application of the stability result (Proposition~\ref{P:stab}) and the assumption that $v\in L_{t,x}^5([0,\infty)\times\R^3)$ therefore leads to 
\[
\|u(t_n+t)\|_{L_{t,x}^5([0,\infty)\times\R^3)}=\|u\|_{L_{t,x}^5([t_n,\infty)\times\R^3)}\lesssim 1
\]
for all large $n$, contradicting that the $L_{t,x}^5$-norm of $u$ is infinite on $[0,\infty)$.

If instead the solution $v$ scatters as $t\to-\infty$, the approximate solutions $\tilde v_n$ are instead approximate solutions on $(-\infty,0]$, and so the argument yields
\[
\|u(t_n+t)\|_{L_{t,x}^5((-\infty,0]\times\R^3)} = \|u\|_{L_{t,x}^5((-\infty,t_n]\times\R^3)} \lesssim 1
\] 
for large $n$, which is a contradiction.
 
\emph{Case 2.} Next, we suppose $V(x)=a|x|^{-2}$. In this case, the argument is basically the same as the one used to prove \cite[Theorem~6.1]{KMVZ}, which constructed scattering solutions to \eqref{nls} corresponding to initial data below the standard NLS scattering threshold and living far from the origin.  Accordingly, our presentation will be somewhat brief.

First suppose that $v$ scatters as $t\to\infty$.  We can then build approximate solutions to \eqref{nls} on $[0,\infty)$ as follows:  We let $P_n$ denote the Littlewood--Paley projection to frequencies less than $|x(t_n)|^\theta$ for some $0<\theta\ll 1$, and let $w_n$ denote the solution to ($\text{NLS}_0$) with initial data $P_n v_0$.  As $P_n\to Id$ strongly in $H^1$, we may apply the stability theory for ($\text{NLS}_0$) to deduce that $w_n$ are forward-global, scattering solutions for all large $n$. We then define $\chi_n$ to be a smooth function satisfying
\[
\chi_n(x) = \begin{cases} 0 & |x+x(t_n)|<\tfrac14|x(t_n)| \\ 1 & |x+x(t_n)| >\tfrac12 |x(t_n)|\end{cases} 
\]
and obeying the symbol bounds $|\partial^\alpha\chi_n(x)| \lesssim |x(t_n)|^{-|\alpha|}$.  Given $T>0$, we finally define the approximate solutions 
\begin{equation}\label{tilde-vnT}
\tilde v_{n,T}(t,x) = \begin{cases} [\chi_n w_n](t,x-x(t_n)) & 0 \leq t\leq T, \\ e^{-i(t-T)H}\tilde v_{n,T}(T) & t>T.\end{cases}  
\end{equation}
We then claim that the following hold:
\begin{align}
&\limsup_{T\to\infty}\limsup_{n\to\infty} \|\tilde v_{n,T}(0)-v_0(x-x(t_n))\|_{\dot H^{\frac12}} = 0, \label{embedding1}\\
&\limsup_{T\to\infty}\limsup_{n\to\infty}\ \bigl\{ \|\tilde v_{n,T}\|_{L_t^\infty H_x^1([0,\infty)\times\R^3)}+\|\tilde v_{n,T}\|_{L_{t,x}^5([0,\infty)\times\R^3)}\bigr\} \lesssim 1, \label{embedding2} \\
&\limsup_{T\to\infty}\limsup_{n\to\infty}\||\nabla|^{\frac12}[(i\partial_t-H)\tilde v_{n,T}+|\tilde v_{n,T}|^2\tilde v_{n,T}]\|_{N([0,\infty))} = 0,\label{embedding3}
\end{align}
where $N$ is as in \eqref{def:N}. 

We first note that \eqref{embedding1} follows by construction and the dominated convergence theorem; the essential fact here is that $|x(t_n)|\to\infty$, so that $\chi_n(x)\to 1$ for all $x\in\R^3$ and $P_n\to Id$ strongly in $H^1$.  Similarly, the bounds in \eqref{embedding2} follow from the $L_t^\infty H_x^1$ and $L_{t,x}^5$ bounds for $w_n$, along with Strichartz estimates for the linear propagator.

The proof of \eqref{embedding3}, although a bit more involved, follows as in the proof of \cite[(6.10)]{KMVZ}. Thus we will only sketch the main ideas.  First, on the region $t>T$, the error arises only from the nonlinearity, and hence it ultimately suffices to prove that 
\begin{equation}\label{approx-error1}
\lim_{T\to\infty}\limsup_{n\to\infty} \|e^{-it H_n}[\chi_n P_n v(T)]\|_{L_{t,x}^5((0,\infty)\times\R^3)} = 0,
\end{equation}
where the operator
\[
H_n := -\Delta + a|x+x(t_n)|^{-2}
\]
takes into account the failure of translation symmetry for $H$.  Using the facts that $|x(t_n)|\to\infty$ (so that $H_n$ converges to $-\Delta$ in the sense of \cite[Lemma~3.3]{KMVZZ}); that the solutions $w_n$ converge in $L_t^\infty H_x^1$ to $v$ on any fixed finite interval; and that $v$ scatters to some final state $v_+$, we can eventually derive \eqref{approx-error1} from the fact that
\[
\lim_{T\to\infty} \|e^{it\Delta}e^{iT\Delta} v_+\|_{L_{t,x}^5((0,\infty)\times\R^3)}=\lim_{T\to\infty}\|e^{it\Delta}v_+\|_{L_{t,x}^5((T,\infty)\times\R^3)}=0. 
\]

On the region $0\leq t\leq T$, we rely on the fact that the $w_n$ are solutions to ($\text{NLS}_0$).  The error therefore consists of three types of terms:
\begin{itemize}
\item[(i)] A nonlinear error due to the cutoff $\chi_n$, namely,
\[
[(\chi_n^3-\chi_n)|w_n|^2 w_n](t,x-x(t_n)).
\]
\item[(ii)] Linear errors due to the cutoff $\chi_n$, namely,
\[
[w_n\Delta \chi_n +2\nabla \chi_n\cdot\nabla w_n](t,x-x(t_n)).
\]
\item[(iii)] The error due to the potential, namely,
\[
-a|x|^{-2}[\chi_n w_n](t,x-x(t_n)).
\]
\end{itemize}

Term (iii) basically explains why the cutoff $\chi_n$ was introduced in the first place.  That is, the presence of the cutoff and the fact that $|x(t_n)|\to\infty$ guarantees that the potential $|x|^{-2}$ is pointwise small, which yields the required decay for this term. We also see the necessity of approximating with $w_n$ only on a finite interval $[0,T]$, as we must simply estimate the time integrals via H\"older. 

Term (ii), which requires a finite time interval for the same reason, enjoys decaying pointwise bounds arising from the derivative bounds for $\chi_n$ and the fact that $|x_n|\to\infty$.  However, this term additionally relies on the low-frequency cutoff $P_n$.  Indeed, the derivative of $w_n$ appears, while the stability theory requires us to estimate a further derivative.  Here we rely on persistence of regularity to obtain bounds for higher derivatives of $w_n$ that grow only like powers of $|x(t_n)|^\theta$.  In particular, choosing $\theta$ small enough, the pointwise bounds arising from the derivatives of $\chi_n$ are more than enough to obtain decay as $n\to\infty$.

Finally, term (i) can be controlled by relying on the space-time bounds for $w_n$ and the dominated convergence theorem, since $\chi_n^3 \to \chi_n\to 0$ on $\R^3$.  Thus we finally conclude that the error estimate \eqref{embedding3} holds.

With \eqref{embedding1}--\eqref{embedding3} in place, we may apply the stability result, Proposition~\ref{P:stab}, to deduce that for all large $n$, there exists a solution $v_n$ to \eqref{nls} that satisfies
\[
v_n(0)=v_0(\cdot-x(t_n))\qtq{and}\|v_n\|_{L_{t,x}^5([0,\infty)\times\R^3)}\lesssim 1. 
\]
Thus, recalling \eqref{CMT}, we have
\[
\|u(t_n,x)-v_n(0)\|_{H^1}\to 0 \qtq{as}n\to\infty,
\]
so that another application of Proposition~\ref{P:stab} implies
\[
\|u(t_n+t)\|_{L_{t,x}^5([0,\infty)\times\R^3)} = \|u\|_{L_{t,x}^5([t_n,\infty)\times\R^3)} \lesssim 1
\]
for all large $n$.  However, this contradicts the fact that the $L_{t,x}^5$-norm of $u$ is infinite on $[0,\infty)$. 

If instead the solution $v$ scatters as $t\to-\infty$, the approximate solutions $\tilde v_{n,T}$ (see \eqref{tilde-vnT} above) are defined in the analogous fashion on $(-\infty,0]$.  Repeating the arguments above, we construct solutions to \eqref{nls} with initial data $v_0(\cdot-x(t_n))$ that scatter backwards in time.  In this case, Proposition~\ref{P:stab} implies
\[
\|u(t_n+t)\|_{L_{t,x}^5((-\infty,0])} = \|u\|_{L_{t,x}^5((-\infty,t_n)\times\R^3)} \lesssim 1
\]
for all large $n$, again contradicting that the $L_{t,x}^5$-norm of $u$ is infinite. \end{proof}

Finally, we connect $\delta(t)$ to the standard virial quantity as follows.

\begin{lemma}\label{L:virial-versus-delta} There exists $c>0$ so that
\begin{equation}\label{virial-versus-delta}
F_\infty^0[u(t)]=\int |\nabla u(t,x)|^2 - \tfrac34|u(t,x)|^4\,dx \geq c\,\delta(t)\qtq{for all}t\in[0,\infty). 
\end{equation}
\end{lemma}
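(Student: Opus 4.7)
The strategy is the standard Holmer--Roudenko coercivity argument, adapted to the threshold conditions \eqref{threshold}. The starting point is the sharp Gagliardo--Nirenberg inequality \eqref{sharp-GN} with $V\equiv 0$: evaluating at $Q$ and using the Pohozaev identities $\|Q\|_{L^4}^4=\tfrac{4}{3}\|\nabla Q\|_{L^2}^2$ and $E_0(Q)=\tfrac{1}{6}\|\nabla Q\|_{L^2}^2$ identifies the sharp constant as $C_0\|Q\|_{L^2}=\tfrac{4}{3\|\nabla Q\|_{L^2}}$. Since $\|u(t)\|_{L^2}=\|Q\|_{L^2}$ by mass conservation and \eqref{threshold}, applying \eqref{sharp-GN} (with $V\equiv 0$) to $u(t)$ yields
\[
\tfrac{3}{4}\|u(t)\|_{L^4}^4 \leq \frac{\|\nabla u(t)\|_{L^2}^3}{\|\nabla Q\|_{L^2}},
\]
so that
\[
F_\infty^0[u(t)] \geq \|\nabla u(t)\|_{L^2}^2\Bigl(1-\frac{\|\nabla u(t)\|_{L^2}}{\|\nabla Q\|_{L^2}}\Bigr).
\]

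I would then convert the factor in parentheses into a factor of $\delta(t)$. Since $V\ge 0$, one has $\|\nabla u(t)\|_{L^2}^2\leq\|u(t)\|_{\dot H_V^1}^2=\|\nabla Q\|_{L^2}^2-\delta(t)$, and the elementary bound $\sqrt{1-s}\leq 1-s/2$ on $[0,1]$ gives
\[
1-\frac{\|\nabla u(t)\|_{L^2}}{\|\nabla Q\|_{L^2}} \geq \frac{\delta(t)}{2\|\nabla Q\|_{L^2}^2}.
\]

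The remaining ingredient is a uniform lower bound $\|\nabla u(t)\|_{L^2}\gtrsim 1$. Energy conservation together with the positivity of the $L^4$ term gives $\|u(t)\|_{\dot H_V^1}^2\geq 2E_V(u)=2E_0(Q)=\tfrac{1}{3}\|\nabla Q\|_{L^2}^2$. Under \eqref{V3}, Hardy's inequality yields $\int V|u|^2\,dx\lesssim\|\nabla u\|_{L^2}^2$; under \eqref{V1}--\eqref{V2}, H\"older in $L^{3/2}\!\cdot\! L^3$ together with Sobolev embedding yields the same bound with constant depending on $\|V\|_{L^{3/2}}$. In either case $\|u(t)\|_{\dot H_V^1}^2\leq C_V\|\nabla u(t)\|_{L^2}^2$, and hence
\[
\|\nabla u(t)\|_{L^2}^2 \geq \frac{\|\nabla Q\|_{L^2}^2}{3C_V}>0
\]
uniformly in $t$. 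Plugging this into the previous display produces \eqref{virial-versus-delta} with, for instance, $c=(6C_V)^{-1}$.

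There is no genuine obstacle here: the whole proof is a brief variational computation resting only on the sharp GN identity for $Q$ and energy conservation at the threshold. In particular, it uses neither the compactness \eqref{u-is-compact} nor the modulation analysis of Section~\ref{S:modulation}; those tools will instead be needed later, when one must upgrade this bound to show that $\int V|u|^2\,dx$ is in fact quadratically small in $\delta(t)$ on the set $I_0$.
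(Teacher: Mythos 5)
Your proof is correct, and it is genuinely different from --- and arguably cleaner than --- the proof in the paper.

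The paper's argument works off the exact identity $F_\infty^0[u(t)]=\tfrac12\delta(t)-\int V|u(t)|^2\,dx$ (which follows from the Pohozaev relations and $E_V(u)=E_0(Q)$), so the whole difficulty is to show that the potential term cannot absorb essentially all of $\tfrac12\delta(t)$. The paper resolves this by contradiction, invoking the modulation estimate $\int V|u|^2\lesssim\delta^2$ from Proposition~\ref{P:modulation}; it therefore creates a dependence between Lemma~\ref{L:virial-versus-delta} and Section~\ref{S:modulation}. You instead never materialize the competing term $\int V|u|^2$: you push the Gagliardo--Nirenberg coercivity $F_\infty^0\geq\|\nabla u\|_{L^2}^2(1-\|\nabla u\|_{L^2}/\|\nabla Q\|_{L^2})$ through the elementary inequality $\sqrt{1-s}\leq 1-s/2$, using only $\|\nabla u\|_{L^2}^2\leq\|u\|_{\dot H^1_V}^2=\|\nabla Q\|_{L^2}^2-\delta(t)$ (from $V\geq0$) together with the uniform lower bound $\|\nabla u(t)\|_{L^2}\gtrsim1$ (from $E_V(u)=E_0(Q)$, the sign of the $L^4$ term, and Hardy or $L^{3/2}$--H\"older). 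All of your steps check out, and the final explicit constant $c=(6C_V)^{-1}$ is consistent with the identity $F_\infty^0=\tfrac12\delta-\int V|u|^2$. What the paper's route buys in exchange for the extra machinery is the sharper quantitative statement $\int V|u|^2\lesssim\delta^2$ near the threshold, which it needs anyway in the modulated virial argument of Section~\ref{UBtoB}; for the lemma itself, your self-contained variational computation suffices and avoids the forward reference to the modulation section.
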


\begin{proof} We begin by connecting the left-hand side of \eqref{virial-versus-delta} to $\delta(t)$. By the Pohozaev identities for $Q$ (see, e.g. \cite[(3.2)]{KMVZ}), we have
\begin{equation}\label{poho}
\|Q\|_{\dot H^1}^2 = \tfrac34\|Q\|_{L^4}^4,\qtq{so that} 3E_0(Q)=\tfrac12\int |\nabla Q|^2\,dx. 
\end{equation}
We may also write
\[
\int |\nabla u|^2 - \tfrac34|u|^4\,dx = 3E_V(u) -  \int\tfrac12 |\nabla u|^2+\tfrac32 V(x)|u|^2\,dx,
\]
and hence (recalling $E_V(u)=E_0(Q)$)
\begin{equation}\label{virial-connect}
\int |\nabla u|^2 - \tfrac34|u|^4 \,dx = \tfrac12\delta(t) - \int V(x)|u|^2\,dx.
\end{equation}

Next, we show positivity for the left-hand side of \eqref{virial-connect}. In fact, using the sharp Gagliardo--Nirenberg inequality, \eqref{poho}, \eqref{delta-is-always-positive}, and \eqref{threshold}, we obtain
\begin{equation}\label{squeeze}
\|u\|_{L^4}^4\leq C_0\|u\|_{L^2}\|\nabla u\|_{L^2}^3 = \frac{\|Q\|_{L^4}^4}{\|\nabla Q\|_{L^2}^2} \cdot\frac{\|\nabla u\|_{L^2}}{\|\nabla Q\|_{L^2}}\cdot\|\nabla u\|_{L^2}^2< \tfrac{4}{3}\|\nabla u\|_{L^2}^2.
\end{equation}

Now suppose that \eqref{virial-versus-delta} fails. Then we may find a sequence $\{t_n\}\subset[0,\infty)$ so that 
\begin{equation}\label{vqtoz}
\int |\nabla u(t_n,x)|^2 - \tfrac34|u(t_n,x)|^4\,dx \leq \tfrac{1}{n}\delta(t_n). 
\end{equation}
In particular, the left-hand side of \eqref{vqtoz} tends to zero and hence (by \eqref{squeeze}) we obtain 
\[
\|\nabla u(t_n)\|_{L^2}\to \|\nabla Q\|_{L^2},
\]
which yields
\[
\delta(t_n)=-\int V(x)|u(t_n,x)|^2\,dx + o(1) \qtq{as}n\to\infty.
\]
Returning to \eqref{virial-connect} and recalling \eqref{vqtoz}, we find that $\delta(t_n)\to 0$.  However, by Proposition~\ref{P:modulation}, this implies that 
\[
\int V(x)|u(t_n,x)|^2\,dx \lesssim \delta(t_n)^2 \leq \tfrac{1}{4}\delta(t_n)
\]
for $n$ sufficiently large. Inserting this into \eqref{vqtoz} and \eqref{virial-connect} then implies
\[
\tfrac14\delta(t_n)\leq \tfrac{1}{n}\delta(t_n)
\]
for all $n$ sufficiently large, contradicting \eqref{delta-is-always-positive}.\end{proof}

\subsection{If $x(t)$ is bounded, then $x(t)$ is unbounded.}\label{BtoUB}

In this section, we show that $x(t)$ cannot be bounded.  Indeed, we will show the following:

\begin{proposition}\label{bounded-implies-unbounded} If $x(t)$ is bounded, then $x(t)$ is unbounded.
\end{proposition}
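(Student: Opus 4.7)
The plan is by contradiction: assume $|x(t)| \leq M$ for all $t \geq 0$, and produce a sequence $t_n \to \infty$ along which $\delta(t_n) \to 0$. By Lemma~\ref{ximpliesdeltaimpliesx} this forces $|x(t_n)| \to \infty$, contradicting the boundedness assumption. The mechanism for extracting such a sequence is the standard (truncated) virial argument applied in a ball containing the orbit.

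The first step is to upgrade the modulated compactness \eqref{u-is-compact}. Since $x(t)$ takes values in a fixed ball of radius $M$, the full orbit $\{u(t) : t \geq 0\}$ is itself precompact in $H^1(\R^3)$. Hence for every $\eps > 0$ there is $R_\eps \geq M+1$ with
\[
\int_{|x|>R_\eps} |\nabla u(t,x)|^2 + |u(t,x)|^2 + |u(t,x)|^4 + V(x)|u(t,x)|^2 \,dx < \eps \qtq{for all} t\geq 0.
\]
The $V$-tail in \eqref{V3} uses Hardy; under \eqref{V1} it follows from $V \in L^{3/2}$ together with H\"older and Sobolev embedding, exactly as in the proof of Lemma~\ref{xt-vs-potential}.

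Now I would apply Lemma~\ref{L:virial} with truncation radius $R = R_\eps$. Because $w_R(x) = |x|^2$ for $|x| \leq R$, the integrand of $F_R^V[u] - F_\infty^V[u]$ is supported in $\{|x| > R\}$; the symbol bounds $|\partial^\alpha w_R| \lesssim R^{2-|\alpha|}$ together with the hypotheses on $V$ and $x\cdot\nabla V$ (via \eqref{V1}, or via $x\cdot\nabla V = -2V$ under \eqref{V3}) give
\[
\bigl|F_R^V[u(t)] - F_\infty^V[u(t)]\bigr| \leq C\eps \qtq{uniformly in} t.
\]
The repulsivity hypothesis contributes the essential sign $-4\int x\cdot\nabla V|u|^2\,dx \geq 0$ in \eqref{usual-V}, so combining with Lemma~\ref{L:virial-versus-delta} yields
\[
F_R^V[u(t)] \geq 8 F_\infty^0[u(t)] - C\eps \geq 8c\,\delta(t) - C\eps.
\]
Integrating from $0$ to $T$ and using the uniform bound $|P_R[u(t)]| \leq 2\|u\|_{L^2}\|\nabla u\|_{L^2}\|\nabla w_R\|_{L^\infty} \lesssim R$, I would obtain
\[
c\int_0^T \delta(s)\,ds \lesssim R + \eps T.
\]
If no sequence $t_n \to \infty$ with $\delta(t_n) \to 0$ existed, then $\delta(t) \geq c_0 > 0$ for all sufficiently large $t$, which inserted above gives $cc_0(T-T_0) \lesssim R + \eps T$. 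Choosing $\eps$ small enough that $C\eps \ll cc_0$ (which then fixes $R$) and sending $T \to \infty$ produces the contradiction. Hence such a sequence $t_n$ exists, and Lemma~\ref{ximpliesdeltaimpliesx} completes the argument.

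The step I expect to be most delicate is the uniform error estimate $|F_R^V[u(t)] - F_\infty^V[u(t)]| \lesssim \eps$: this relies crucially on \emph{both} the precompactness \eqref{u-is-compact} \emph{and} the standing assumption $|x(t)| \leq M$, since together they are what upgrade the tightness of the orbit from ``tight modulo $x(t)$'' to true tightness, allowing the single choice of $R_\eps$. The handling of the $V$-dependent pieces requires modest care but is standard; the repulsivity sign is what makes $F_\infty^V[u]$ an upper bound on $F_\infty^0[u]$ and thus permits Lemma~\ref{L:virial-versus-delta} to deliver a lower bound in terms of $\delta(t)$.
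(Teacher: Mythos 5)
Your argument is correct and follows essentially the same path as the paper: precompactness of the orbit plus boundedness of $x(t)$ gives uniform tightness and hence a uniform truncated-virial error, the repulsivity sign and Lemma~\ref{L:virial-versus-delta} give $\tfrac{d}{dt}P_R[u]\gtrsim\delta(t)-\eps$, integrating yields $\int_0^T\delta\lesssim R_\eps+\eps T$, and Lemma~\ref{ximpliesdeltaimpliesx} converts a sequence with $\delta(t_n)\to0$ into $|x(t_n)|\to\infty$. The only cosmetic differences are that you add and subtract $F_\infty^V$ rather than $F_\infty^0$ (absorbing the repulsive term slightly differently) and you extract the sequence by contradiction rather than by taking $T_n\to\infty$ and averaging; both are equivalent to what the paper does in Lemma~\ref{L:standard-virial} and the subsequent short proof.
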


The key ingredient is the standard (truncated) virial identity. 

\begin{lemma}[Virial estimate]\label{L:standard-virial} For any $T>0$ and $\eps>0$, there exists $C_\eps>0$ so that
\begin{equation}\label{the-actual-estimate}
\int_0^T\delta(t)\,dt \lesssim \bigl[C_\eps+\sup_{t\in[0,T]}|x(t)|\bigr]\|u\|_{L_t^\infty H_x^1}^2 + \eps T.
\end{equation}
\end{lemma}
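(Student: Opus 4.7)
The plan is to use the truncated virial identity (Lemma~\ref{L:virial}) with $\chi=0$ in Corollary~\ref{C:virial}, i.e.\ integrate $\tfrac{d}{dt}P_R[u]=F_R^V[u(t)]$ from $0$ to $T$. The left-hand side of the estimate is $\int_0^T \delta(t)\,dt$, which by Lemma~\ref{L:virial-versus-delta} is dominated by $\int_0^T F_\infty^0[u(t)]\,dt$. So it suffices to bound the latter by the claimed right-hand side. The scale $R$ will be chosen as $R=C_\eps+\sup_{t\in[0,T]}|x(t)|$ with $C_\eps$ coming from compactness.

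The first key observation is that the potential contribution in $F_R^V[u]$ has a favorable sign, so it can simply be dropped. Indeed, since $\phi$ is radial with $\partial_r\phi\geq 0$, we have $\nabla w_R(x) = \phi'(|x|/R)\cdot Rx/|x|$, which is radial and points outward. Combined with the repulsive hypothesis $x\cdot\nabla V\leq 0$ (true for both \eqref{V2} and \eqref{V3}), this gives
\[
\nabla V\cdot\nabla w_R = \frac{R\phi'(|x|/R)}{|x|}\,x\cdot\nabla V \leq 0 \qtq{everywhere,}
\]
so $-2\int|u|^2\nabla V\cdot\nabla w_R\,dx\geq 0$, and hence $F_R^V[u]\geq F_R^0[u]$. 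Therefore
\[
\int_0^T F_R^0[u(t)]\,dt \leq \int_0^T F_R^V[u(t)]\,dt = P_R[u(T)] - P_R[u(0)].
\]

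The second key step is to replace $F_R^0[u(t)]$ with $F_\infty^0[u(t)]$ up to a small error. Since $w_R = w_\infty = |x|^2$ on $|x|\leq R$, the difference $F_\infty^0[u(t)]-F_R^0[u(t)]$ is supported on $\{|x|>R\}$, with bounded coefficients (plus a $R^{-2}$-factor on the $\Delta\Delta w_R$ term). Consequently,
\[
|F_\infty^0[u(t)] - F_R^0[u(t)]| \lesssim \int_{|x|>R}|u(t)|^2 + |\nabla u(t)|^2 + |u(t)|^4\,dx.
\]
By pre-compactness of $\{u(t,\cdot+x(t))\}$ in $H^1$ (cf.\ \eqref{u-is-compact}) together with Sobolev embedding, for any $\eps>0$ we find $C_\eps>0$ so that the above integrand integrated over $\{|x-x(t)|>C_\eps\}$ is $<\eps$ uniformly in $t\in[0,T]$. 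Choosing $R=C_\eps+\sup_{t\in[0,T]}|x(t)|$ ensures $\{|x|>R\}\subseteq\{|x-x(t)|>C_\eps\}$ for every $t$, so $|F_\infty^0[u(t)]-F_R^0[u(t)]|<\eps$ for $t\in[0,T]$.

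Combining these pieces with the trivial bound $|P_R[u(t)]|\lesssim R\|u(t)\|_{H^1}^2$ yields
\[
\int_0^T F_\infty^0[u(t)]\,dt \lesssim R\,\|u\|_{L_t^\infty H_x^1}^2 + \eps T,
\]
and invoking Lemma~\ref{L:virial-versus-delta} gives the desired estimate for $\int_0^T\delta(t)\,dt$. The step I expect to require the most care is the sign argument for the potential term, since the inequality $F_R^V[u]\geq F_R^0[u]$ is exactly what lets us avoid having to estimate the unfavorable contribution of $\nabla V$ in the region $|x|>R$, where neither the decay of $\nabla V$ nor the integrability in $L^{3/2}$ of $\nabla V$ is available under hypotheses \eqref{V1}--\eqref{V2}; the radiality of $\phi$ and the repulsivity hypothesis are exactly aligned to make this cancellation work.
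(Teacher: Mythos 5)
Your proposal is correct and follows the same overall strategy as the paper: integrate the truncated virial identity on $[0,T]$, bound $|P_R[u]|$ by $R\|u\|_{L_t^\infty H_x^1}^2$, invoke Lemma~\ref{L:virial-versus-delta} to pass from $F_\infty^0[u(t)]$ to $\delta(t)$, and choose $R=C_\eps+\sup_{[0,T]}|x(t)|$ so that compactness controls the potential-free error $F_R^0-F_\infty^0$ supported on $\{|x|>R\}\subset\{|x-x(t)|>C_\eps\}$. The one genuine difference is in the treatment of the potential term $-2\int|u|^2\nabla V\cdot\nabla w_R\,dx$. The paper decomposes it as $\int -4x\cdot\nabla V\,|u|^2\,dx$ (sign-definite by repulsivity, discarded) plus an $\mathcal{O}\bigl(\int_{|x|>R}|x\cdot\nabla V|\,|u|^2\bigr)$ error controlled by compactness, and for this reason includes $|x\cdot\nabla V|\,|u|^2$ in the compactness input \eqref{virial-compact-smallness}. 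You instead note that $\nabla w_R$ is a non-negative radial multiple of $x$ (since $\partial_r\phi\geq 0$), hence $\nabla V\cdot\nabla w_R\leq 0$ pointwise, so the \emph{entire} potential contribution to $F_R^V$ is non-negative and can simply be dropped with no further estimation. This is a cleaner route to the same conclusion: it removes the need for the $|x\cdot\nabla V|\,|u|^2$ piece of the compactness statement and makes transparent, as you observe, that no decay or integrability of $\nabla V$ in the exterior region $|x|>R$ is required. Both arguments ultimately rest on exactly the same two structural ingredients ($\partial_r\phi\geq 0$ and $x\cdot\nabla V\leq 0$); yours simply exploits the product sign one step earlier, before any splitting.
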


\begin{proof} Fix $T>0$. We will apply the virial identity on $[0,T]$ in the form \eqref{virial-id}.  In particular, we adopt the notation introduced in Section~\ref{S:virial}.  With $\eps>0$ given and $R>1$ to be determined below, we write
\begin{equation}\label{ftc1}
\tfrac{d}{dt}P_R[u] = F_\infty^0[u(t)] + F_R^V[u(t)]-F_\infty^0[u(t)], 
\end{equation}
where (recalling Lemma~\ref{L:virial-versus-delta}) 
\begin{align*}
|P_R[u(t)]| &=\biggl| 2\Im \int \bar u \nabla u \cdot\nabla w_R\,dx\biggr| \lesssim R\|u\|_{L_t^\infty H_x^1}^2, \\
F_\infty^0[u(t)] &\geq c\delta(t),
\end{align*}
and
\begin{align}
F_R^V[u(t)] - F_\infty^0[u(t)] & = -\int_{|x|>R} 8 |\nabla u|^2 - 6|u|^4 +4\Re \bar u_j u_k \partial_{jk}[w_R]\,dx \label{SVE1} \\
& \quad + \int_{|x|>R}  (-\Delta\Delta w_R)|u|^2 - |u|^4 \Delta w_R\,dx\label{SVE2}
 \\ & \quad + \int - 2|u|^2\nabla V\cdot\nabla w_R \,dx.\label{SVE3}
\end{align}

By compactness in $H^1$ (i.e. \eqref{u-is-compact}) and \eqref{V1} or \eqref{V3}, we may choose $C_\eps>0$ large enough that
\begin{equation}\label{virial-compact-smallness}
\int_{|x-x(t)|>C_\eps}|\nabla u|^2 + |x\cdot\nabla V|\,|u|^2+|x|^{-2}|u|^2+|u|^4\,dx \ll \eps.
\end{equation}
Thus, using the conditions on the weight $w_R$ specified in Section~\ref{S:virial} and choosing
\[
R=C_\eps+\sup_{t\in[0,T]}|x(t)|, \qtq{so that}\{|x|>R\}\subset\{|x-x(t)|>C_\eps\}\qtq{for}t\in[0,T],
\]
we may obtain
\begin{equation}\label{standard-virial-error-small}
|\eqref{SVE1}|+|\eqref{SVE2}| < \eps \qtq{uniformly for}t\in[0,T]. 
\end{equation}

For \eqref{SVE3}, we use the definition of $w_R$, the fact $V$ is repulsive, and \eqref{virial-compact-smallness} to obtain
\[
\eqref{SVE3} = \int -4x\cdot\nabla V |u|^2\,dx + \mathcal{O}\biggl(\int_{|x|>R}  |x\cdot\nabla V| |u|^2\,dx\biggr) \geq -\tfrac{1}{100}\eps.
\]

Thus, applying the fundamental theorem of calculus (i.e. integrating \eqref{ftc1} over $[0,T]$), we therefore obtain the estimate \eqref{the-actual-estimate}.   \end{proof}

\begin{proof}[Proof of Proposition~\ref{bounded-implies-unbounded}] Suppose that $x(t)$ is bounded.  By Lemma~\ref{L:standard-virial}, we deduce that for any $\eps>0$, \[
\tfrac{1}{T}\int_0^T\delta(t) dt \lesssim \tfrac{1}{T}C_\eps + \eps \qtq{for all}T>0. 
\]
Choosing a sequence of $\eps_n\to 0$ and a suitable sequence $T_n\to\infty$, we may now obtain a sequence $t_n\in[\tfrac12T_n,T_n]$ such that $\delta(t_n)\to 0$. Using Lemma~\ref{ximpliesdeltaimpliesx}, we deduce $|x(t_n)|\to\infty$.  \end{proof}

\subsection{If $x(t)$ is unbounded, then $x(t)$ is bounded.}\label{UBtoB}  In this section, we will show that $x(t)$ cannot be unbounded.  We prove:
\begin{proposition}\label{unbounded-implies-bounded} If $x(t)$ is unbounded, then $x(t)$ is bounded. 
\end{proposition}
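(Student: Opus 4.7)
I would argue by contradiction, assuming $x(t)$ unbounded and exploiting the modulated virial identity of Corollary~\ref{C:virial} on the full interval $[0,T]$ with $T$ large. The aim is to establish an estimate of the form $\int_0^T \delta(t)\,dt \lesssim R(T)$, where $R(T) := C_\ast + \sup_{t\in[0,T]}|x(t)|$, and then to feed this back into the bound on the variation of $x(t)$ provided by Proposition~\ref{P:modulation}, obtaining a self-improving inequality $R(T) \le \eta\,R(T) + C$ for some $\eta \ll 1$, which contradicts the assumption that $R(T) \to \infty$.

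\textbf{Virial setup.} I would fix $\delta_0>0$ small enough for Proposition~\ref{P:modulation}, choose a smooth cutoff $\psi:\R\to[0,1]$ with $\psi\equiv 1$ on $[0,\delta_0/2]$ and $\psi\equiv 0$ on $[\delta_0,\infty)$, and apply Corollary~\ref{C:virial} with $R=R(T)$, with $\theta(t),y(t)$ the modulation parameters extended arbitrarily outside $I_0$, and with $\chi(t)=\psi(\delta(t))$. Integrating, using $|P_R[u]|\lesssim R\|u\|_{L^\infty_tH^1}^2$ for the boundary terms, and invoking Lemma~\ref{L:virial-versus-delta} on the leading $F_\infty^0[u(t)]$ piece gives
\[
c\int_0^T\delta(t)\,dt \le C\,R(T) + \int_0^T\bigl|\mathcal E(t)\bigr|\,dt,
\]
with $\mathcal E(t)$ collecting \eqref{the-error-term} and \eqref{add-zero}.

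\textbf{Error analysis.} The error is treated in two regimes. On $\{\delta(t)\ge\delta_0\}$, $\chi\equiv 0$ and $\mathcal E(t)$ is the standard virial error; by compactness \eqref{u-is-compact} (choosing $C_\ast$ large), the tails on $\{|x|>R(T)\}$ are arbitrarily small, while the term involving $\nabla V \cdot \nabla w_R$ has a favorable sign by repulsivity \eqref{V2}/\eqref{V3}, so $|\mathcal E(t)|\lesssim\eta\delta_0 \le \eta\,\delta(t)$. On $I_0\cap\{\chi>0\}$, I substitute $u=e^{i\theta}[Q(\cdot-y)+g]$ into both $F_{R(T)}^V[u]-F_\infty^0[u]$ and $F_{R(T)}^0[e^{i\theta}Q(\cdot-y)]-F_\infty^0[e^{i\theta}Q(\cdot-y)]$. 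The pure-$Q$ pieces of $F^0$ cancel in the combination, leaving (i) terms at least linear in $g$, estimated by $\|g\|_{H^1}\lesssim\delta(t)$; (ii) terms involving $V$, estimated by $\int V|u|^2\lesssim \delta(t)^2$ together with repulsivity; and (iii) tails of $Q(\cdot-y)$ on $\{|x|>R(T)\}$, controlled by exponential decay of $Q$ and the bound $e^{-2|y(t)|}/|y(t)|^2\lesssim\delta(t)$ from Proposition~\ref{P:modulation}. All told, $|\mathcal E(t)|\le \eta\,\delta(t)$ for $\delta_0$ small, hence $\int_0^T\delta(t)\,dt\lesssim R(T)$.

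\textbf{Closing and main obstacle.} To convert this into a bound on $R(T)$, I would use $|\dot y(t)|\lesssim\delta(t)$ on $I_0$ (Proposition~\ref{P:modulation}) together with $x(t)=y(t)$ there, which bounds the variation of $x$ on $[0,T]\cap I_0$ by $\int_0^T\delta(t)\,dt\lesssim R(T)$; on $[0,T]\setminus I_0$, the set has measure $\lesssim\delta_0^{-1}R(T)$, and each component contributes an $O(1)$ oscillation to $x_0(t)$ by the continuity of $t\mapsto u(t)$ in $H^1$ and \eqref{u-is-compact}. Summing, $R(T)\le \eta R(T)+C$, contradicting $R(T)\to\infty$. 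The main obstacle is the error analysis on $I_0$, specifically handling the tail contribution of $Q(\cdot-y(t))$ in the region $\{|x|>R(T)\}$: this is straightforward for the inverse-square potential since the relevant virial term coincides (up to scaling) with $\int V|u|^2$, but in the general case it requires the explicit exponential decay of $Q$ together with the sharp modulation estimate $e^{-2|y|}/|y|^2\lesssim\delta(t)$ to close the argument.
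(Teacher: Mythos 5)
Your overall strategy (modulated virial to control $\int\delta$, then use $|\dot y|\lesssim\delta$ to control the variation of $x$, then close by self-improvement) is the paper's strategy, and your identification of the main technical obstacle — the tail of $Q(\cdot-y)$ in the region $\{|x|>R\}$ requiring the exponential decay of $Q$ together with the modulation estimate $e^{-2|y|}/|y|^2\lesssim\delta$ — is exactly right. However, there is a genuine gap in the closing step: your argument does not actually produce the small factor $\eta$ you claim.

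The problem lies in the boundary terms of the virial identity. You estimate $|P_R[u(t)]|\lesssim R\|u\|_{L_t^\infty H_x^1}^2$, which after integrating gives $\int_0^T\delta(t)\,dt\lesssim R(T)$. Feeding this back through $|\dot y|\lesssim\delta$ gives the variation of $x$ on $I_0$ as $\lesssim\int\delta\lesssim R(T)$, with an implicit constant that is $O(1)$ (in fact $O(1/\delta_1)$ once you also account for the excursions outside $I_0$). So what you obtain is $R(T)\lesssim R(T)+C$ — a tautology, not a self-improving inequality. The source of the smallness $\eta$ cannot be the error terms $\mathcal{E}(t)$ (those only affect the constant in $\int\delta\lesssim R$); it must come from the boundary terms.

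The paper's fix is to observe that since $Q$ is real, $P_R[e^{i\theta}Q(\cdot-y)]\equiv 0$, so that when $\delta(t_j)<\delta_1$ one can subtract and use Proposition~\ref{P:modulation} to get the sharpened boundary estimate $|P_R[u(t_j)]|\lesssim R\,\|g(t_j)\|_{H^1}\lesssim R\,\delta(t_j)$; when $\delta(t_j)\geq\delta_1$ the trivial bound suffices since $R\lesssim\frac{R}{\delta_1}\delta(t_j)$. This yields Lemma~\ref{modulated-virial}: $\int_{t_1}^{t_2}\delta\,dt\leq C_{\delta_1}[1+\sup_{[t_1,t_2]}|x|]\{\delta(t_1)+\delta(t_2)\}$. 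The crucial extra factor $\{\delta(t_1)+\delta(t_2)\}$ is then made small by choosing the endpoints $t_1,t_2$ to be times where $\delta$ is tiny — which is possible precisely because $|x(t_n)|\to\infty$ implies $\delta(t_n)\to 0$ by Lemma~\ref{ximpliesdeltaimpliesx}. Without this refinement of the boundary term, the argument does not close. A secondary (and more easily fixed) issue: your accounting on $[0,T]\setminus I_0$ bounds the \emph{measure} by $\delta_0^{-1}R(T)$, but the number of connected components (and hence the total $O(1)$ oscillation) is not controlled by the measure alone; the paper instead works on unit intervals using Lemmas~\ref{L:local-constancy} and~\ref{L:local-constancy-2} to get the contribution $\lesssim\frac{1}{\delta_1}\int\delta$ uniformly.
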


This section will also require the use of a virial argument that yields control over $\delta(t)$.  However, in this setting, we will consider a `modulated' version of the virial identity that takes into account the possibility that $u$ is close to the orbit of $Q$.

\begin{lemma}[Modulated virial]\label{modulated-virial} Fix $\delta_1\in(0,\delta_0)$ sufficiently small.  Then for any $[t_1,t_2]\subset[0,\infty)$, we have
\[
\int_{t_1}^{t_2} \delta(t)\,dt \leq C_{\delta_1}\bigl[1+\sup_{t\in[t_1,t_2]}|x(t)|\bigr]\{\delta(t_1)+\delta(t_2)\}.
\]
\end{lemma}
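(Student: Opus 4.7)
The plan is to integrate the modulated virial identity of Corollary~\ref{C:virial} on $[t_1,t_2]$ with $R := C_{\delta_1}\bigl(1+\sup_{t\in[t_1,t_2]}|x(t)|\bigr)$ for a large constant $C_{\delta_1}$ to be fixed, and modulation parameters $\chi(t) := \mathbf{1}_{I_s}(t)$, $\theta(t)$, $y(t)$, where $I_s := \{t\in[t_1,t_2] : \delta(t)<\delta_1\}$, so that Proposition~\ref{P:modulation} applies (since $\delta_1<\delta_0$, and $y(t)=x(t)$ on $I_s$ by \eqref{def:x(t)}). Writing $I_b := [t_1,t_2]\setminus I_s$ and using Lemma~\ref{L:virial-versus-delta} on the leading $F_\infty^0$ term, integration yields
\[
c\int_{t_1}^{t_2}\delta(t)\,dt \leq |P_R[u(t_1)]|+|P_R[u(t_2)]|+\int_{t_1}^{t_2}|E(t)|\,dt,
\]
where $E(t)$ denotes the combined virial/modulation error from \eqref{the-error-term}-\eqref{add-zero}. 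It suffices to show that each boundary term is $\lesssim_{\delta_1}R\,\delta(t_i)$ and that the error integral is bounded by $\tfrac{c}{2}\int_{t_1}^{t_2}\delta(t)\,dt$, which then absorbs into the left-hand side.

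For the boundary contributions I would exploit the identity $P_R[e^{i\theta}Q(\cdot-y)]=0$ (which holds since $Q$ is real-valued). When $t_i\in I_s$, write $P_R[u(t_i)] = P_R[u(t_i)] - P_R[e^{i\theta(t_i)}Q(\cdot-y(t_i))]$ and substitute $u = e^{i\theta}[Q(\cdot-y)+g]$; since $|\nabla w_R|\lesssim R$, the resulting expression is linear-plus-quadratic in $g$ and hence $|P_R[u(t_i)]|\lesssim R\|g(t_i)\|_{H^1}\lesssim R\,\delta(t_i)$ by Proposition~\ref{P:modulation}. When $t_i\in I_b$ the crude bound $|P_R[u(t_i)]|\lesssim R\|u\|_{L_t^\infty H_x^1}^2 \lesssim R \leq \delta_1^{-1}R\,\delta(t_i)$ suffices.

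The error integrand is estimated by a case split. On $I_b$, $\chi=0$ so $E(t) = F_R^V[u]-F_\infty^0[u]$ is the standard virial error; by the choice of $R$ and the compactness \eqref{u-is-compact}, the weight-difference contributions on $|x|>R$ can be made $\ll\delta_1$ (exactly as in Lemma~\ref{L:standard-virial}), while the potential term $-2\int|u|^2\nabla V\cdot\nabla w_R$ has a favorable sign on $\{|x|\leq R\}$ by repulsivity \eqref{V2} and is tail-small on $\{|x|>R\}$ via compactness and the decay of $\nabla V$ from \eqref{V1} or \eqref{V3}. Since $\delta(t)\geq\delta_1$ on $I_b$, the error is bounded by an arbitrarily small multiple of $\delta(t)$ there. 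On $I_s$ I would regroup $E(t)$ as
\[
\bigl[F_R^V[u] - F_R^0[e^{i\theta}Q(\cdot-y)]\bigr] - \bigl[F_\infty^0[u] - F_\infty^0[e^{i\theta}Q(\cdot-y)]\bigr]
\]
and substitute $u=e^{i\theta}[Q(\cdot-y)+g]$. The added-zero mechanism (Lemma~\ref{L:add-zero}) is exactly what allows the pure $Q(\cdot-y)$ parts of the two $F^0$-type functionals to cancel; the remaining contributions split into (i) an $O(\|g\|_{H^1}^2)=O(\delta(t)^2)$ piece from the quadratic-and-higher terms, (ii) a potential piece $-2\int|Q(\cdot-y)|^2\nabla V\cdot\nabla w_R + \text{cross terms in }g$, which one controls using the modulation bound $\int V|u|^2\lesssim\delta(t)^2$ together with $\|g\|_{H^1}\lesssim\delta(t)$ and Cauchy--Schwarz to deduce $\int V|Q(\cdot-y)|^2\lesssim\delta(t)^2$, and (iii) tails on $|x|>R$ of $Q(\cdot-y)$-type quantities, which are exponentially small since $R-|y(t)|\gtrsim R$ and $Q$ decays exponentially. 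Altogether $|E(t)|\lesssim\delta(t)^2\leq\delta_1\delta(t)$ on $I_s$, and choosing $\delta_1$ small gives the desired absorption.

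The main obstacle is the regrouped expansion on $I_s$: one must verify that the linear-in-$g$ contributions from the weight-difference and potential portions either cancel between the two brackets (up to $|x|>R$ tails) or combine with the aforementioned Cauchy--Schwarz trick to land in $O(\delta(t)^2)$. This requires simultaneous use of all three clauses of \eqref{modulation-bounds}, and in the general potential case the lower bound $|y|\gtrsim\log(1/\delta(t))$ extracted from $e^{-2|y|}/|y|^2\lesssim\delta(t)$ must be matched against the exponential decay of $Q$ and the structure of $\nabla V$, which is appreciably more delicate than in the scale-invariant inverse-square case where the potential contribution reduces directly to the potential energy (cf.\ the discussion in Section~\ref{S:conclusion}).
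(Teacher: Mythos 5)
Your overall strategy matches the paper's proof precisely: integrate the modulated virial identity of Corollary~\ref{C:virial} with $\chi=\mathbf{1}_{\{\delta<\delta_1\}}$, choose $R\sim 1+\sup|x(t)|$, use $P_R[e^{i\theta}Q(\cdot-y)]=0$ to extract a factor $\delta(t_j)$ from the boundary terms on $I_s$, use the crude $\delta_1^{-1}$ estimate on $I_b$, and absorb the error integral into the coercive $F_\infty^0\geq c\,\delta$ term. The split of the error by $\delta\lessgtr\delta_1$ and the use of compactness together with the modulation bounds also mirror the paper.

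There is, however, a technical flaw in your treatment of the potential error term on $I_s$. You propose to control $-2\int|Q(\cdot-y)|^2\nabla V\cdot\nabla w_R$ by deriving $\int V|Q(\cdot-y)|^2\lesssim\delta^2$ from the modulation bound $\int V|u|^2\lesssim\delta^2$ via Cauchy--Schwarz. But for general $V$ satisfying \eqref{V1}--\eqref{V2}, the quantity $\nabla V\cdot\nabla w_R$ is not comparable to $V$, so the bound on $\int V|Q(\cdot-y)|^2$ does not control what you actually need. (This identification only works in the inverse-square case, where $\nabla V\cdot\nabla w_R$ literally reduces to a multiple of $V$ on $|x|\le R$, plus controllable tails.) The paper handles the general case by first dropping the $|x|\le R$ contribution using repulsivity, then using $x\cdot\nabla V\in L^{3/2}$ (or the pointwise bound for \eqref{V3}) together with H\"older to reduce to $\|Q(\cdot-y)\|_{L^6(|x|>R)}^2+\|Q(\cdot-y)\|_{L^2(|x|>R)}^2$, and finally invoking the explicit exponential decay of $Q$ together with the modulation bound $|y|^{-2}e^{-2|y|}\lesssim\delta(t)$ to conclude $\lesssim\delta(t)^2$. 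You flag this general-case issue at the end, but the Cauchy--Schwarz reduction you sketch is not the right tool to fill it in; you should go through the tail estimate on $Q$ directly as above. Also, the paper's bound on $I_s$ is $\eps\delta(t)+\delta_1\delta(t)$ rather than your claimed uniform $O(\delta(t)^2)$: the pieces involving tails of $u$ and $g$ are controlled by compactness (smallness parameter $\eps$) times $\|g\|_{H^1}\lesssim\delta$, not by $\delta^2$; this is still absorbable after choosing $\eps=\eps(\delta_1)$ small, but the stronger claim $|E(t)|\lesssim\delta^2$ is not established by your argument.
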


\begin{proof}  Fix $R\geq 1$ to be chosen below.  We will apply the virial identity on $[t_1,t_2]$ in the form given in Corollary~\ref{C:virial}.  In particular, we adopt the notation introduced in Section~\ref{S:virial}.  We will apply this corollary with 
\[
\chi(t) = \begin{cases} 1 & \delta(t) < \delta_1 \\ 0 & \delta(t) \geq \delta_1.\end{cases}
\] 
On the support of $\chi$, Proposition~\ref{P:modulation} applies and we take $y(t)$ and $\theta(t)$ as in \eqref{modulation-identity}.  Recalling the notation of that section, as well as Lemma~\ref{L:virial-versus-delta}, we therefore have
\begin{equation}\label{ftc2}
\tfrac{d}{dt}P_R[u] = F_\infty^0[u(t)] + e(t) \geq c\delta(t)  + e(t),
\end{equation}
where
\begin{equation}\label{error-case1}
e(t) = F_R^V[u(t)]- F_\infty^0[u(t)] \qtq{if}\delta(t)\geq \delta_1
\end{equation}
and
\begin{equation}\label{error-case2}
\begin{aligned}
e(t) & = F_R^V[u(t)]-F_\infty^0[u(t)]   \\
& \quad - \bigl\{F_R^0[e^{i\theta(t)}Q(\cdot-y(t))] - F_\infty^0[e^{i\theta(t)}Q(\cdot-y(t))]\bigr\}
\end{aligned}\qtq{if}\delta(t)<\delta_1.
\end{equation}

To derive the desired estimate, we need to obtain bounds for $P_R[u(t_j)]$ as well as $e(t)$.

We first estimate $P_R[u(t_j)]$ for $j=1,2$.  In the present setting, we would like to exhibit a factor of $\delta(t_j)$.  If $\delta(t_j)\geq \delta_1$, this is straightforward, as
\[
|P_R(u(t))| =\biggl|2\Im\int \bar u\nabla u\cdot\nabla w_R\,dx \biggr| \lesssim R\|u\|_{L_t^\infty H_x^1}^2 \lesssim_u \tfrac{R}{\delta_1}\delta(t_j). 
\]
If instead $\delta(t_j)<\delta_1$, then we are in the setting of Proposition~\ref{P:modulation}.  In this case, we use the fact that $Q$ is real-valued and \eqref{modulation-bounds} to write
\begin{align*}
|P_R(u(t_j))| & = \biggl| 2\Im \int [\bar u\nabla u-e^{-i\theta(t_j)}Q(\cdot-y(t_j))\nabla [e^{i\theta(t_j)}Q(\cdot-y(t_j))]\cdot\nabla w_R\biggr| \,dx \\
& \lesssim R\{\|u\|_{L_t^\infty H_x^1}+\|Q\|_{H^1}\}\|u(t_j)-e^{i\theta(t_j)}Q(\cdot-y(t_j))\|_{H^1} \lesssim_u R\delta(t_j). 
\end{align*}

We turn to the error term, $e(t)$.  Here we would like to exhibit a small multiple of $\delta(t)$, so that the error term can be dominated by the main term.  

In the case of \eqref{error-case1} (when $\delta(t)\geq \delta_1$), this is once again straightforward.  In fact, the estimate is the same as in Lemma~\ref{L:standard-virial}. We let $\eps>0$ and (using compactness) choose $C_\eps$ sufficiently large so that
\[
\int_{|x-x(t)|>C_\eps}|\nabla u|^2 + |x\cdot\nabla V|\,|u|^2+ |x|^{-2}|u|^2 + |u|^4\,dx \ll \eps.
\]
We then choose
\begin{equation}\label{outside-ball}
R=C_\eps+9\sup_{t\in[t_1,t_2]}|x(t)|,\qtq{so}\{|x|>R\}\subset\{|x-x(t)|>C_\eps\}\qtq{for}t\in[t_1,t_2].
\end{equation}
Recalling the treatment of the error terms \eqref{SVE1}--\eqref{SVE3} above, we therefore obtain
\[
e(t)=F_R^V[u(t)] - F_\infty^0[u(t)]\geq -\eps \geq -\tfrac{\eps}{\delta_1}\delta(t)
\]
uniformly over $t\in[t_1,t_2]$ such that $\delta(t)\geq \delta_1$.

We turn to \eqref{error-case2}.  Here we essentially rely on the continuity of the functional 
\[
v\mapsto F_R^0[v]-F_\infty^0[v].
\]
In particular, temporarily adopting the notation
\[
Q(t) = e^{i\theta(t)}Q(\cdot-y(t)),
\]
we argue as for \eqref{SVE1}--\eqref{SVE3} above to write
\begin{align}
e(t) & =
-\int_{|x|>R} 8 [|\nabla u|^2-|\nabla Q(t)|^2]  - 6[|u|^4-|Q(t)|^4] \,dx \label{modulated-error1} \\
& \quad + 4\int_{|x|>R} \Re[ \bar u_j u_k- \partial_j \bar Q(t)\partial_kQ(t)] \partial_{jk}[w_R]\,dx \label{modulated-error2}  \\
& \quad + \int_{|x|>R} (-\Delta\Delta w_R)[|u|^2-|Q(t)|^2] - [|u|^4-|Q(t)|^4] \Delta w_R \,dx\label{modulated-error3}\\
& \quad +\int  -2|u|^2\nabla V\cdot\nabla w_R \,dx\label{modulated-error4}.
\end{align}
for $t\in[t_1,t_2]$ such that $\delta(t)<\delta_1$.  

We first handle \eqref{modulated-error1}--\eqref{modulated-error3}.  Observe that these may all be estimated by terms of the form
\[
\bigl\{\|u(t)\|_{H_x^1(|x|>R)}^k+\|Q(\cdot-y(t))\|_{H_x^1(|x|>R)}^k\bigr\}\|u(t)-e^{i\theta(t)}Q(\cdot-y(t))\|_{H_x^1},
\]
where $k\in\{1,3\}$.  Observing $\delta(t)<\delta_1$ implies $y(t)=x(t)$ and recalling \eqref{outside-ball} and \eqref{modulation-bounds}, this is further estimated by
\begin{align*}
\bigl\{ \|u(t)\|_{H_x^1(|x-x(t)|>C_\eps)}^k + \|Q\|_{H_x^1(|x|>C_\eps)}^k\bigr\} \delta(t) \lesssim \eps \delta(t),
\end{align*}
where we have chosen $C_\eps$ possibly even larger depending on $Q$.

It remains to deal with \eqref{modulated-error4}.  For this, we observe that by the definition of $w_R$ and the fact that $V$ is repulsive, we have
\begin{align}
\eqref{modulated-error4} & \geq \int_{|x|>R} -2|u|^2\nabla V\cdot\nabla w_R\,dx  \nonumber\\
& = \int_{|x|>R} -2[|u|^2 - |Q(t)|^2]\nabla V\cdot\nabla w_R\,dx \label{modulated-error5}\\
&  \quad -\int_{|x|>R}2|Q(t)|^2\nabla V\cdot\nabla w_R\,dx\label{modulated-error6}. 
\end{align}

Now, \eqref{modulated-error5} may be estimated as in \eqref{modulated-error1}--\eqref{modulated-error3}, leading to the bound
\[
|\eqref{modulated-error5}|\lesssim \eps\delta(t),
\] 
which is acceptable.  

It therefore remains to estimate \eqref{modulated-error6}. If $V$ satisfies \eqref{V1}--\eqref{V2}, we use $x\cdot\nabla V\in L^{3/2}$, while if $V$ satisfies \eqref{V3} we use the pointwise bound $|x\cdot\nabla V|\lesssim |x|^{-2}\lesssim 1$.  In this way, we arrive at the estimate
\[
|\eqref{modulated-error6}| \lesssim \|Q(\cdot-y(t))\|_{L^6(|x|>R)}^2 + \|Q(\cdot-y(t))\|_{L^2(|x|>R)}^2.
\]
Now, recall that $x(t)=y(t)$ for $\delta(t)<\delta_1$, and so by the definition of $R$ in \eqref{outside-ball}, we have
\[
\{|x|>R\}\subset\{|x-y(t)|>8|y(t)|\}
\]
for such $t$.  Then, recalling that $|Q(x)| \lesssim |x|^{-1} e^{-|x|}$ for $|x|\geq 1$, we obtain the following by explicit computation and the modulation bound \eqref{modulation-bounds}:
\begin{align*}
\|Q\|_{L^6(|x|>8|y(t)|)}^2& +\|Q\|_{L^2(|x|>8|y(t)|)}^2 \\&\lesssim e^{-8|y(t)|} \lesssim |y(t)|^{-4}e^{-4|y(t)|}\lesssim [\delta(t)]^2\lesssim\delta_1 \delta(t),
\end{align*}
which is acceptable. 

Applying the fundamental theorem of calculus (i.e. integrating \eqref{ftc2} over $[t_1,t_2]$) and collecting the estimates above, we obtain 
\begin{align*}
\int_{t_1}^{t_2}\delta(t)\,dt & \lesssim \tfrac{R}{\delta_1}\{\delta(t_1)+\delta(t_2)\} + \{\tfrac{\eps}{\delta_1}+\eps+\delta_1\}\int_{t_1}^{t_2} \delta(t)\,dt \\
& \lesssim \tfrac{1}{\delta_1}\bigl[C_\eps+\sup_{t\in[t_1,t_2]}|x(t)|\bigr]\{\delta(t_1)+\delta(t_2)\}+ \{\tfrac{\eps}{\delta_1}+\eps+\delta_1\}\int_{t_1}^{t_2} \delta(t)\,dt.
\end{align*}
Choosing $\eps=\eps(\delta_1)$ sufficiently small (and denoting $C_{\delta_1}=\tfrac{1}{\delta_1}C_\eps$), we obtain the desired estimate. \end{proof}

Roughly speaking, Lemma~\ref{ximpliesdeltaimpliesx} and Lemma~\ref{modulated-virial} show that if $x(t)$ becomes unbounded, then we can control integrals of the form $\int_I \delta(t)\,dt$ in terms of a \emph{small} multiple of $\sup_I |x(t)|$.  The last piece of the puzzle is therefore to show that $\int_I \delta(t)\,dt$ controls the variation of $x(t)$ on $I$, for then we can show that `$x(t)$ controls itself' and ultimately remains bounded.  Here we face a slight annoyance due to the piecewise definition of $x(t)$ (cf. \eqref{def:x(t)}).  Indeed, while the bound $|\dot y|\lesssim \delta$ provided by Proposition~\ref{P:modulation} is precisely the type input we want, it is not so clear on which parts of an arbitrary interval $I$ we should use $y(t)$ versus $x_0(t)$.  Nonetheless, with the aid of a few lemmas, we can establish the following:

\begin{proposition}\label{delta-controls-x} There exists $\delta_1>0$ sufficiently small and $C>0$ such that for any interval $[t_1,t_2]\subset[0,\infty)$, 
\begin{equation}\label{delta-controls-x-eq}
|x(t_2)-x(t_1)| \leq \tfrac{C}{\delta_1}\int_{t_1}^{t_2}\delta(t)\,dt + 2C. 
\end{equation}
\end{proposition}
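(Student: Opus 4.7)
The plan is to combine the two key ingredients already in hand: on the set $I_0 = \{t : \delta(t) < \delta_0\}$ we have $x(t)=y(t)$ and the modulation bound $|\dot y(t)| \lesssim \delta(t)$ from Proposition~\ref{P:modulation}, while on the complementary set we will establish a uniform a priori bound on $|x(t)|$ by exploiting pre-compactness. The bookkeeping issue is that $x(t)$ is defined piecewise; we must arrange the splitting of $[t_1,t_2]$ so that every pointwise value of $x$ that appears in the estimate is taken at a time inside $I_0$ where $x=y$ unambiguously.

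The first step is to upgrade Lemma~\ref{ximpliesdeltaimpliesx} to the following uniform statement: for any $\delta_1>0$ there exists $M=M(\delta_1)<\infty$ such that
\[
\delta(t)\geq\delta_1 \ \implies\ |x(t)|\leq M \qquad \text{for all } t\in[0,\infty).
\]
If this failed, one could extract $\{t_n\}\subset[0,\infty)$ with $\delta(t_n)\geq\delta_1$ and $|x(t_n)|\to\infty$. Pre-compactness \eqref{u-is-compact} together with Lemma~\ref{xt-vs-potential} then yields a subsequential limit profile $v_0$ obeying the hypotheses of the threshold classification of \cite{DR}, and the approximation scheme used in the proof of Lemma~\ref{ximpliesdeltaimpliesx} produces solutions to \eqref{nls} with finite $L_{t,x}^5$-norm either on $[t_n,\infty)$ or on $(-\infty,t_n]$. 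Combining with the local theory on the complementary compact interval contradicts \eqref{blowup}, and this contradiction is insensitive to whether $\{t_n\}$ is bounded.

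The second step is the splitting. Fix $\delta_1\in(0,\delta_0)$ sufficiently small (compatible with Lemma~\ref{modulated-virial}). Given $[t_1,t_2]\subset[0,\infty)$, set $K:=\{t\in[t_1,t_2]:\delta(t)\geq\delta_1\}$, a closed set by continuity of $\delta$. If $K=\emptyset$ then $[t_1,t_2]\subset I_0$ and the modulation bound alone gives
\[
|x(t_2)-x(t_1)|=|y(t_2)-y(t_1)|\leq\int_{t_1}^{t_2}|\dot y|\,dt\lesssim\int_{t_1}^{t_2}\delta(t)\,dt.
\]
Otherwise let $\tau_1=\min K$ and $\tau_2=\max K$. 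By definition $\delta<\delta_1$ on $[t_1,\tau_1)\cup(\tau_2,t_2]$, while $\delta(\tau_j)=\delta_1<\delta_0$, so the closed pieces $[t_1,\tau_1]$ and $[\tau_2,t_2]$ both lie in $I_0$ and support the identification $x\equiv y$. The modulation bound therefore gives
\[
|x(\tau_1)-x(t_1)|+|x(t_2)-x(\tau_2)|\lesssim\int_{t_1}^{t_2}\delta(t)\,dt,
\]
and $|x(\tau_1)|,|x(\tau_2)|\leq M(\delta_1)$ by Step 1. The triangle inequality then delivers $|x(t_2)-x(t_1)|\leq C_0\int_{t_1}^{t_2}\delta(t)\,dt+2M(\delta_1)$, which reduces to the stated form after enlarging $C$ to absorb both $\delta_1 C_0$ and $M(\delta_1)$.

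The main obstacle is the potential discontinuity of $x(t)$ across $\partial I_0$, where the definition transitions from $y$ to $x_0$. It is resolved by choosing $\delta_1$ strictly less than $\delta_0$: the reference times $\tau_j$, where $\delta=\delta_1$, are then safely inside $I_0$ and carry the unambiguous identification $x(\tau_j)=y(\tau_j)$. The interior of $K$, where $x$ may equal $x_0$ and is not controlled by the modulation, is never visited pointwise --- only the endpoints $\tau_1,\tau_2$ enter the estimate, through the uniform bound established in Step 1. Consequently the $O(1)$ discrepancy between $y$ and $x_0$ furnished by Lemma~\ref{L:xvsy} never needs to be invoked directly.
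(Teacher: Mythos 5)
Your argument takes a genuinely different route from the paper's: where the paper partitions $[t_1,t_2]$ into unit intervals and leverages two auxiliary local-constancy lemmas (Lemma~\ref{L:local-constancy}, which bounds $|x(t)-x(s)|$ for $|t-s|\leq1$ via Duhamel and compactness, and Lemma~\ref{L:local-constancy-2}, a dichotomy for $\delta$ on unit intervals), you reduce the whole estimate to the two transition times $\tau_1,\tau_2$ plus a global a priori bound on $|x(t)|$ over the set $\{\delta(t)\geq\delta_1\}$. If Step~1 holds, this is a cleaner scheme that makes both auxiliary lemmas unnecessary, and your Step~2 is correct (the degenerate endpoint cases $\tau_1=t_1$ or $\tau_2=t_2$, where the associated sub-interval collapses to a point, pose no issue since the corresponding contribution to the variation is zero and the bound $|x(\tau_j)|\leq M(\delta_1)$ still applies).

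There is, however, a gap in your Step~1. You assert that the scattering contradiction obtained from the approximation scheme in Lemma~\ref{ximpliesdeltaimpliesx} is ``insensitive to whether $\{t_n\}$ is bounded.'' That is not right as stated: when the limit profile $v$ from \cite{DR} scatters only as $t\to-\infty$, the approximation yields a uniform bound on $\|u\|_{L^5_{t,x}([0,t_n]\times\R^3)}$, which contradicts \eqref{blowup} only if $t_n\to\infty$. The claimed uniform statement is nonetheless true, but for a different reason: bounded $t_n$ together with $|x(t_n)|\to\infty$ is already impossible. Indeed, if $t_{n_k}\to t_*<\infty$, then $u(t_{n_k})\to u(t_*)$ strongly in $H^1$ by continuity of the flow; if at the same time $|x(t_{n_k})|\to\infty$, the translates $u(t_{n_k},\cdot+x(t_{n_k}))$ differ from $u(t_*,\cdot+x(t_{n_k}))$ by $o(1)$ in $H^1$ and hence converge weakly to zero, forcing the strong $H^1$-limit furnished by \eqref{u-is-compact} to vanish, which contradicts $M(u)=M(Q)>0$. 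Once bounded sequences are excluded on these grounds, you should simply invoke Lemma~\ref{ximpliesdeltaimpliesx} directly for the case $t_n\to\infty$ rather than re-running its proof.
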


We begin with a lemma that bounds the local variation of $x(t)$.

\begin{lemma}\label{L:local-constancy} There exists $C>0$ such that for all $t,s\in[0,\infty)$ with $|t-s|\leq 1$, we have
\[
|x(t)-x(s)| \leq C. 
\]
\end{lemma}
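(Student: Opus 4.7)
The plan is a contradiction argument. Suppose the lemma fails; then there exist sequences $t_n, s_n \in [0,\infty)$ with $|t_n - s_n| \leq 1$ but $|x(t_n) - x(s_n)| \to \infty$. By the compactness property \eqref{u-is-compact} and the nondegeneracy bound \eqref{x(t)-lower-bound-on-unit-ball}, after passing to subsequences there exist nonzero $\phi,\psi \in H^1$ and $\tau \in [-1,1]$ with
\[
u(t_n, \cdot + x(t_n)) \to \phi, \quad u(s_n, \cdot + x(s_n)) \to \psi \quad\text{in } H^1, \qquad s_n - t_n \to \tau.
\]
Since $|x(t_n) - x(s_n)| \to \infty$, at least one of $|x(t_n)|, |x(s_n)|$ must tend to infinity along a subsequence, and by a symmetric role swap (and time reversal if necessary) we may assume $|x(t_n)| \to \infty$ and $\tau \geq 0$.

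The core idea is that over the short interval $[t_n, s_n]$ the potential term in \eqref{nls} is negligible, since $u$ is spatially concentrated near $x(t_n)$, which has escaped to infinity, so $u$ is well-approximated by a translated $(\text{NLS}_0)$ solution. Specifically, let $v$ be the $(\text{NLS}_0)$ solution with $v(0)=\phi$ (which exists with appropriate Strichartz bounds on $[0,1]$ by the local theory), and build an approximate solution $\tilde v_n$ to \eqref{nls} centered at $x(t_n)$ from $v$: either directly via $\tilde v_n(t,x) = v(t-t_n, x-x(t_n))$ when $V$ satisfies \eqref{V1}--\eqref{V2}, with the resulting error $V\tilde v_n$ controlled as in the proof of Proposition~\ref{P:embedding} (cf.\ \eqref{444error}); or via the cutoff construction \eqref{tilde-vnT} when $V=a|x|^{-2}$, which simplifies considerably here since we work on a bounded interval and need not exploit scattering of $v$. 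Since $\|\tilde v_n(t_n) - u(t_n)\|_{H^1} \to 0$, Proposition~\ref{P:stab} produces
\[
\|u(s_n) - v(s_n - t_n, \cdot - x(t_n))\|_{\dot H^{1/2}} \to 0.
\]
Translating by $x(t_n)$, using continuity of the $(\text{NLS}_0)$ flow in $H^1$ (so $v(s_n-t_n) \to v(\tau)$), and invoking the Sobolev embedding $\dot H^{1/2} \hookrightarrow L^3$, we conclude
\[
u(s_n, \cdot + x(t_n)) \longrightarrow v(\tau) \quad\text{in } L^2_{\mathrm{loc}}(\R^3).
\]

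On the other hand, setting $h_n := x(t_n)-x(s_n)$ with $|h_n|\to\infty$, the strong $H^1$ convergence $u(s_n,\cdot+x(s_n))\to\psi$ rewrites as
\[
u(s_n,\cdot+x(t_n)) = \psi(\cdot + h_n) + o(1)_{H^1}.
\]
Since $\psi\in H^1$ and $|h_n|\to\infty$, the translates $\psi(\cdot+h_n)$ converge weakly to zero in $H^1$, hence strongly to zero in $L^2_{\mathrm{loc}}$ by Rellich--Kondrachov. Combining with the previous display forces $v(\tau) \equiv 0$, contradicting the $L^2$-mass conservation $\|v(\tau)\|_{L^2} = \|\phi\|_{L^2} > 0$ for $(\text{NLS}_0)$.

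The main obstacle is cleanly verifying the stability approximation in the inverse-square case, where translations do not commute with $H$ and one cannot directly substitute $v(\cdot-x(t_n))$ into \eqref{nls}. Fortunately, all the machinery has already been assembled in the proof of Lemma~\ref{ximpliesdeltaimpliesx}: the cutoff-and-truncate construction \eqref{tilde-vnT} supplies the required approximate solutions, and because we only need the approximation on a bounded time interval, the delicate scattering-based estimate \eqref{approx-error1} is not needed here and the verification of \eqref{embedding1}--\eqref{embedding3} becomes routine.
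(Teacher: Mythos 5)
Your argument is correct in structure and reaches the right conclusion, but it is a genuinely different and considerably heavier route than the one the paper actually uses, and it contains one imprecision you should fix.

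The paper's proof is elementary and self-contained: from the Duhamel formula, unitarity of $e^{-itH}$ on $L^2$, and the Sobolev embedding $H^1\hookrightarrow L^6$, one gets the Lipschitz bound $\|u(t)-u(s)\|_{L^2}\lesssim |t-s|$. On the other hand, compactness \eqref{u-is-compact} concentrates the $L^2$-mass in a ball of fixed radius $C_\eps$ around $x(t)$. If $|x(t_n)-x(s_n)|\to\infty$ while $|t_n-s_n|\leq\eta$, then for large $n$ the concentration balls are disjoint, forcing $\|u(t_n)\chi_\eps(t_n)-u(s_n)\chi_\eps(s_n)\|_{L^2}^2$ to be bounded below by roughly twice the mass, while the Lipschitz bound and the tail estimate force it to be small — a contradiction for $\eps,\eta$ small. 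Concatenation of intervals of length $\eta$ then gives the statement for $|t-s|\leq 1$. No stability theory, no profile limits, no reference to \eqref{nls0}, and crucially no case distinction between the short-range and inverse-square potentials is needed.

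Your argument instead passes to profile limits $\phi$, $\psi$, runs the approximation-by-\eqref{nls0} machinery from Proposition~\ref{P:embedding}/Lemma~\ref{ximpliesdeltaimpliesx}, invokes the stability result, and concludes via weak vanishing of the translates $\psi(\cdot+h_n)$. This is valid but imports a lot of machinery (including, for $V=a|x|^{-2}$, the cutoff-and-Littlewood--Paley construction \eqref{tilde-vnT} with all its attendant error estimates) to prove what is essentially an $L^2$-continuity statement. One step as written is not justified: you claim the $(\text{NLS}_0)$ solution with data $\phi$ "exists with appropriate Strichartz bounds on $[0,1]$ by the local theory." The local theory gives a lifespan depending on $\|\phi\|_{H^1}$, which need not be $\geq 1$, and the focusing cubic equation can blow up. To fix this, either (a) note that $M(\phi)=M(Q)$, $E_0(\phi)=E_0(Q)$ (using $\int V|u(t_n)|^2\,dx\to 0$ from Lemma~\ref{xt-vs-potential}), and $\|\phi\|_{\dot H^1}\leq\|Q\|_{\dot H^1}$, so by the classification of \cite{DR} the solution $v$ is actually global; or (b) prove the bound first only for $|t-s|\leq\eta$ with $\eta$ depending on the uniform $H^1$-bound of $u$, and then concatenate — which is exactly the reduction the paper itself uses. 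Either patch makes your argument rigorous, but the paper's one-page $L^2$ computation is markedly simpler and sidesteps every one of the approximation issues you flag as the "main obstacle."
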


\begin{proof} By the Duhamel formula, Sobolev embedding, and uniform boundedness in $H^1$, we first observe that
\[
\|u(t)-u(s)\|_{L^2} \leq \int_{s}^t \| e^{-i(t-\tau)H}|u|^2 u(\tau)\|_{L^2}\,ds \leq |t-s|\|u\|_{L_t^\infty L_x^6}^3\lesssim|t-s|
\]
for any $s,t\in[0,\infty)$.  Next, given $\eps>0$ small, we can use compactness (i.e. \eqref{u-is-compact}) to find $C_\eps>0$ so that
\[
\sup_{t\in[0,\infty)} \|u(t)[1-\chi_\eps(t)]\|_{L^2}<\eps,\qtq{where}\chi_\eps(t)=\chi_{B_{C_\eps}(x(t))}. 
\]
In particular, by the triangle inequality, we have
\begin{equation}\label{lcfx-1}
\|u(t)\chi_\eps(t)-u(s)\chi_\eps(s)\|_{L^2} \leq C_u|t-s| + 2\eps \qtq{uniformly over}t,s\in[0,\infty).
\end{equation}

Now let $\eta>0$ to be determined below and suppose that $s_n,t_n\in[0,\infty)$ obey 
\[
|t_n-s_n|\leq \eta\qtq{but}|x(t_n)-x(s_n)|\to\infty.
\]
Then for $n$ sufficiently large, we have
\begin{equation}\label{lcfx-2}
\begin{aligned}
\|u(t_n)\chi_\eps(t_n)-u(s_n)\chi_\eps(s_n)\|_{L^2}^2 & = \|u(t_n)\chi_\eps(t_n)\|_{L^2}^2 + \|u(s_n)\chi_\eps(s_n)\|_{L^2}^2 \\
& \geq 2[M(u)-\eps]^2.
\end{aligned}
\end{equation}

Combining \eqref{lcfx-1} and \eqref{lcfx-2} yields a contradiction upon choosing $\eps$ and then $\eta$ sufficiently small (depending only on the $L_t^\infty H_x^1$-norm of $u$).  We conclude that there exists $C_0>0$ so that $|x(t)-x(s)|\leq C_0$ for all $|t-s|\leq \eta$, which implies the lemma with $C=\eta^{-1}C_0$. \end{proof}

Also useful is the fact that if $\delta(t)$ is very small at some $t\in[0,\infty)$, then there is an entire unit-sized interval containing $t$ where the modulation theory of Proposition~\ref{P:modulation} applies.  This is implied by the following lemma.

\begin{lemma}\label{L:local-constancy-2} There exists $\delta_1>0$ sufficiently small so that for all $t_0\in[0,\infty)$,
\[
\qtq{either} \inf_{t\in[t_0,t_0+1]} \delta(t) \geq \delta_1 \qtq{or} \sup_{t\in[t_0,t_0+1]}\delta(t)<\delta_0.
\]
\end{lemma}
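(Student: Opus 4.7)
My plan is to argue by contradiction. If no such $\delta_1$ works, then taking the candidate $\delta_1^{(n)} = 1/n$ I can find $t_0^{(n)} \in [0,\infty)$ and points $s_n, t_n \in [t_0^{(n)}, t_0^{(n)}+1]$ satisfying $\delta(s_n) < 1/n$ and $\delta(t_n) \geq \delta_0$. Since $\delta(s_n) < \delta_0$, Proposition~\ref{P:modulation} applies at $s_n$ and yields the decomposition
\[
u(s_n) = e^{i\theta(s_n)}\bigl[Q(\cdot - y(s_n)) + g(s_n)\bigr]
\]
with $\|g(s_n)\|_{H^1} \lesssim \delta(s_n) \to 0$. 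The bound $e^{-2|y(s_n)|}/|y(s_n)|^2 \lesssim \delta(s_n)$ in \eqref{modulation-bounds} additionally forces $|y(s_n)| \to \infty$.

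The main step will be to compare $u$ on the interval $I_n := [s_n - 1, s_n + 1] \supset [t_0^{(n)}, t_0^{(n)}+1]$ with the approximate solution
\[
\tilde v_n(t,x) := e^{i(t - s_n) + i\theta(s_n)} Q(x - y(s_n)),
\]
which solves $(\text{NLS}_0)$ exactly and therefore $(i\partial_t - H)\tilde v_n + |\tilde v_n|^2 \tilde v_n = -V \tilde v_n$. The $\tilde v_n$ enjoy uniform $L_t^\infty H_x^1 \cap L_{t,x}^5(I_n\times\R^3)$ bounds depending only on $Q$, and the data match in $\dot H^{1/2}$ to order $\|g(s_n)\|_{H^1}\to 0$. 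The delicate point is to show that the error $V\tilde v_n$ vanishes in $\dot N^{1/2}(I_n)$ as $|y(s_n)| \to \infty$. For $V$ satisfying \eqref{V1}--\eqref{V2}, this mimics the argument behind \eqref{444error}: one translates to rewrite $VQ(\cdot - y(s_n))$ as $V(\cdot + y(s_n))Q$ and exploits absolute continuity of $\|V\|_{L^{3/2}}$ on balls receding to infinity, together with the exponential decay of $Q$ to handle a tail. For $V$ satisfying \eqref{V3}, one argues as in Lemma~\ref{ximpliesdeltaimpliesx}, combining the pointwise bound $V(x) \lesssim |y(s_n)|^{-2}$ on the bulk support of $Q(\cdot - y(s_n))$ with the exponential decay of $Q$ away from its center. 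Granting this, Proposition~\ref{P:stab} applied on $I_n$ (forward and backward from $s_n$) delivers, for $n$ large,
\[
\|u - \tilde v_n\|_{L_t^\infty \dot H_x^{1/2}(I_n \times \R^3)} \longrightarrow 0,
\]
where we also use Lemma~\ref{L:EOSS} to replace $\dot H_V^{1/2}$ by $\dot H^{1/2}$.

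To close the argument, I will interpolate this $\dot H^{1/2}$ decay against the uniform $L_t^\infty H_x^1$ bounds on $u$ and $\tilde v_n$ and use the Sobolev embedding $\dot H^{3/4}(\R^3) \hookrightarrow L^4(\R^3)$ to deduce $\|u - \tilde v_n\|_{L_t^\infty L_x^4(I_n)} \to 0$. The hypothesis $E_V(u) = E_0(Q)$ combined with the Pohozaev identity $\|Q\|_{\dot H^1}^2 = \tfrac34\|Q\|_{L^4}^4$ identifies
\[
\delta(u(t)) = \tfrac12\bigl(\|Q\|_{L^4}^4 - \|u(t)\|_{L^4}^4\bigr),
\]
and since $\|\tilde v_n(t)\|_{L^4}^4 \equiv \|Q\|_{L^4}^4$, the $L^4$ closeness gives $\sup_{t\in I_n}\delta(u(t)) \to 0$. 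In particular $\delta(t_n) \to 0$, contradicting $\delta(t_n) \geq \delta_0$. The main obstacle is the Strichartz-type smallness of $V\tilde v_n$ in $\dot N^{1/2}(I_n)$ for the inverse-square case \eqref{V3}, where $V$ itself is singular; but the required techniques are precisely those already developed in Proposition~\ref{P:embedding} and Lemma~\ref{ximpliesdeltaimpliesx}.
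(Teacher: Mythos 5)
Your argument is correct, but it takes a genuinely different route from the paper's. The paper argues via compactness of the orbit and short-time Lipschitz continuity of the flow: after passing to a subsequence with $t_n - t_n' \to \tau$, it shows $x(t_n')$ is bounded (using Lemma~\ref{L:local-constancy} when $t_n'$ is bounded and Lemma~\ref{ximpliesdeltaimpliesx}---which relies on the Duyckaerts--Roudenko threshold classification---when $t_n'$ is unbounded), extracts a strong $H^1$ limit $v_0$ of $u(t_n')$ with $\delta(v_0)\geq\delta_0$, invokes Corollary~\ref{C:global} to get $\delta(v(\tau))>0$, and then uses the local theory to deduce $\|u(t_n)-u(t_n'+\tau)\|_{\dot H^1}\to 0$, contradicting $\delta(t_n)\to 0$. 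Your proof instead exploits the modulation theory (Proposition~\ref{P:modulation}) at the small-$\delta$ time $s_n$, treats the translated soliton $e^{i(t-s_n)+i\theta(s_n)}Q(\cdot-y(s_n))$ as an approximate solution to \eqref{nls} on $[s_n-1,s_n+1]$, verifies smallness of $V\tilde v_n$ in $\dot N^{1/2}$ via the same far-field estimates underlying Proposition~\ref{P:embedding} and Lemma~\ref{ximpliesdeltaimpliesx}, and closes with stability, interpolation, and the identity $\delta(u)=\tfrac12(\|Q\|_{L^4}^4-\|u\|_{L^4}^4)$ (a direct consequence of $E_V(u)=E_0(Q)$). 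Each approach has its merits: the paper's avoids modulation theory entirely and leans on compactness, while yours is in a sense more quantitative---it establishes the stronger uniform-continuity statement that smallness of $\delta$ at one time propagates over a full unit interval---and does not rely on Lemma~\ref{ximpliesdeltaimpliesx} or the threshold classification of \cite{DR}, though it does bring in the modulation machinery and the error estimates of Proposition~\ref{P:embedding}. One small remark: you should make explicit (as you do implicitly) that the decay of $\|V Q(\cdot-y(s_n))\|_{L^{6/5}}$ is obtained by a splitting argument over $\{|x-y(s_n)|\leq R\}$ and its complement, since the naive bound $\|V(\cdot+y(s_n))\|_{L^{3/2}}\|Q\|_{L^6}$ does not decay; this is the same point where \eqref{444error} is written somewhat tersely in the paper.
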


\begin{proof} Suppose the statement is false. Then there exist $\tau_n\in[0,\infty)$, and $t_n,t_n'\in[\tau_n,\tau_n+1]$ such that
\[
\delta(t_n)\to 0 \qtq{and}\delta(t_n')\geq\delta_0. 
\]
Passing to a subsequence, we may assume $t_n-t_n'\to\tau\in[-1,1]$.  We also have that $x(t_n')$ is bounded and hence converges along a subsequence.  Indeed, this follows from Lemma~\ref{L:local-constancy} if $t_n'$ is bounded and from Lemma~\ref{ximpliesdeltaimpliesx} if $t_n'$ is unbounded.

Using compactness (i.e. \eqref{u-is-compact}) and convergence of $x(t_n')$, we therefore deduce that $u(t_n')$ converges strongly to some $v_0$ in $H^1$ along a further subsequence, which necessarily obeys $M(v_0)=M(Q)$, $E_V(v_0)=E_0(Q)$, and $\delta(v_0)\geq\delta_0$.  By Corollary~\ref{C:global} (and its proof), we deduce that the solution to \eqref{nls} with initial data $v_0$ is global and obeys $\delta(v(t))>0$ for all $t\in\R$.  By stability, we observe that $u(t_n'+\tau)\to v(\tau)$ strongly in $H^1$ and so $\delta(t_n'+\tau)\geq c_0$ for some $c_0>0$ and all large $n$.

On the other hand, we claim that $\| u(t_n) - u(t_n'+\tau)\|_{\dot H^1} \to 0$, so that (recalling $\delta(t_n)\to 0$) we obtain $\delta(t_n'+\tau)\to 0$, yielding a contradiction.  To see this, we first observe that by the local theory for \eqref{nls} (see e.g. \cite[Theorem~2.15]{KMVZ}), we may obtain that
\[
\|u\|_{L_t^5 H_a^{1,\frac{30}{11}}(I\times\R^3)} \lesssim \|u\|_{L_t^\infty H_x^1}
\] 
whenever the interval $I$ is sufficiently small depending on $\|u\|_{L_t^\infty H_x^1}$.  Consequently, writing $I_n$ for the interval between $t_n$ and $t_n'+\tau$, we may estimate as in the proof  of that result (i.e. the Duhamel formula, Strichartz, equivalence of Sobolev spaces, and Sobolev embedding) to obtain
\begin{align*}
\|u(t_n)-u(t_n'+\tau)\|_{\dot H^1} & \lesssim \| |u|^2 u\|_{L_t^2 \dot H_x^{1,\frac65}(I_n\times\R^3)} \\
& \lesssim |I_n|^{\frac{1}{10}} \|u\|_{L_t^5 L_x^6(I_n\times\R^3)}^2 \|u\|_{L_t^\infty \dot H_x^1}\to 0 \qtq{as}n\to\infty. 
\end{align*}
\end{proof}

We turn to the proof of Proposition~\ref{delta-controls-x}.

\begin{proof}[Proof of Proposition~\ref{delta-controls-x}] Let $[t_1,t_2]\subset[0,\infty)$ and let $\delta_1>0$ be as in Lemma~\ref{L:local-constancy-2}.  We first note that by Lemma~\ref{L:local-constancy}, we have
\[
|x(t_2)-x(t_1)| \leq |x(\lfloor t_2\rfloor)-x(\lceil t_1\rceil)| + 2C,
\]
where $\lfloor\cdot\rfloor$ and $\lceil\cdot\rceil$ denote the floor and ceiling, respectively. We now write
\[
|x(\lfloor t_2\rfloor)-x(\lceil t_1\rceil)| \leq \sum_{m} |x(m+1)-x(m)|, 
\]
where here and below the sums are over $m\in[\lceil t_1\rceil,\lfloor t_2\rfloor-1]\cap\mathbb{N}$.  We now consider a single interval $[m,m+1]$. Then, by Lemma~\ref{L:local-constancy-2}, we have that
\[
\qtq{either} \sup_{t\in[m,m+1]}\delta(t)<\delta_0\qtq{or}\inf_{t\in[m,m+1]}\delta(t)>\delta_1. 
\]
In the first case, we have $x(m+1)=y(m+1)$ and $x(m)=y(m)$ (cf. Proposition~\ref{P:modulation}), and hence by \eqref{modulation-bounds} we have
\[
|x(m+1)-x(m)| = |y(m+1)-y(m)| \leq \int_m^{m+1} |\dot y(t)|\,dt \leq C\int_m^{m+1}\delta(t)\,dt. 
\]
In the second case, we have from Lemma~\ref{L:local-constancy} that
\[
|x(m+1)-x(m)| \leq C\leq \tfrac{C}{\delta_1}\int_m^{m+1} \delta(t)\,dt. 
\]
Thus, recalling $\delta(t)>0$ for all $t$, 
\begin{align*}
|x(t_2)-x(t_1)|&\leq  2C+\tfrac{C}{\delta_1} \sum_m \int_m^{m+1}\delta(t)\,dt \\
&\leq 2C+ \tfrac{C}{\delta_1}\int_{\lceil t_1\rceil}^{\lfloor t_2\rfloor }\delta(t)\,dt \leq 2C + \tfrac{C}{\delta_1} \int_{t_1}^{t_2} \delta(t)\,dt.
\end{align*}
\end{proof}

Finally, we prove Proposition~\ref{unbounded-implies-bounded}.

\begin{proof}[Proof of \eqref{unbounded-implies-bounded}] Suppose that $x(t)$ is unbounded.  Then we may find a sequence $t_n\to\infty$ such that $|x(t_n)|\to\infty$ and 
\[
|x(t_n)|=\sup_{t\in[0,t_n]}|x(t)|.
\] 

By Lemma~\ref{ximpliesdeltaimpliesx}, we have that $\delta(t_n)\to 0$, and hence there exists $N\in\mathbb{N}$ so that 
\[
\delta(t_n)<\tfrac{CC_{\delta_1}}{10\delta_1} \qtq{for all}n\geq N,
\]
where $\delta_1$ is as in Lemma~\ref{L:local-constancy-2}, $C_{\delta_1}$ is as in Lemma~\ref{modulated-virial}, and $C$ is as in Proposition~\ref{delta-controls-x}.  Then, combining Proposition~\ref{delta-controls-x} and Lemma~\ref{modulated-virial}, we derive
\begin{align*}
|x(t_n)-x(t_N)| & \leq \tfrac{C}{\delta_1}\int_{t_N}^{t_n}\delta(t)\,dt + 2C \\
& \leq \tfrac{CC_{\delta_1}}{\delta_1}[1+\sup_{t\in[t_N,t_n]}|x(t)|]\{\delta(t_N)+\delta(t_n)\} + 2C\\
& \leq \tilde C + \tfrac12 |x(t_n)|
\end{align*}
for some $\tilde C>0$. Therefore
\[
|x(t_n)| \leq |x(t_N)| + 2\tilde C \qtq{for all}n\geq N,
\]
which implies that $x(t_n)$ is bounded, a contradiction. \end{proof}

\subsection{Conclusion}\label{S:conclusion} In the preceding sections, we have shown that if Theorem~\ref{T} fails, then we may obtain a `blowup' solution to \eqref{nls} parametrized by a spatial center $x(t)$.  This spatial center proves its own undoing: if it is bounded, then it must be unbounded, and vice versa.  We therefore reach a contradiction and conclude that Theorem~\ref{T} holds.

We would also like to point out that the argument above may be simplified at several points if one restricts attention only to the case of the inverse-square potential, particularly in the setting of the virial arguments. This is due to the fact that in this case, the quantity 
\[
V(x)|u|^2\qtq{coincides with}-x\cdot\nabla V(x)|u|^2.
\]
The first quantity appears in the definition of $\delta$ (and the $\dot H_V^1$-norm/energy), while the second arises in the virial identities.  Some simplifications in this case are not surprising, as the virial identity is based off of the scaling symmetry, which is preserved in the special case of the inverse-square potential. In particular, a result such as Lemma~\ref{L:virial-versus-delta} becomes unnecessary in this special case, as the quantity arising in the virial identity \emph{equals} a constant times $\delta(t)$.  Another key simplification occurs in the treatment of the error term \eqref{modulated-error4} in the modulated virial argument.  Indeed, in the inverse-square case, this particular term reduces to $\int V(x)|u|^2\,dx$, which is already controlled by the acceptable term $[\delta(t)]^2$.  In particular, the modulation bound $|y(t)|^{-2}e^{-2|y(t)|}\lesssim \delta(t)$ obtained in Lemma~\ref{L:boundsII} and the reliance on explicit exponential bounds for $Q$ are no longer necessary.

\section{Modulation analysis}\label{S:modulation} 

In this section we prove the modulation result, Proposition~\ref{P:modulation}, which we reproduce as Proposition~\ref{P:modulation2} below.  We recall the functional
\[
\delta(v):= \int_{\R^3} |\nabla Q|^2\,dx - \int_{\R^3} |\nabla v|^2 + V(x)|v|^2\,dx, 
\]
where $Q$ is the standard NLS ground state.  We suppose throughout this section that $u(t)$ is a solution to \eqref{nls} obeying
\begin{equation}\label{MMEE}
M(u) = M(Q), \quad E_V(u) = E_0(Q),\qtq{and} \delta(u_0)>0.
\end{equation}
In particular, by Corollary~\ref{C:global} (and its proof), we obtain that $u(t)$ is global and uniformly bounded in $H^1$, with $\delta(t):=\delta(u(t))>0$ for all $t\in\R$. Given a small parameter $\delta_0>0$, we define the set
\[
I_0 = \{t\in[0,\infty):\delta(t)<\delta_0\},
\]
which (by continuity of the flow in $H^1$) is relatively open in $[0,\infty)$.  We then have the following description of $u(t)$ on the set $I_0$.

\begin{proposition}[Modulation]\label{P:modulation2} The following holds for any $\delta_0>0$ sufficiently small: There exist $\theta:I_0\to\R$ and $y:I_0\to\R^3$ so that $u(t)$ admits the decomposition
\begin{equation}\label{modulation-identity1}
u(t) = e^{i\theta(t)}[Q(x-y(t))+g(t)] \qtq{for}t\in I_0,
\end{equation}
with
\begin{equation}\label{modulation-bounds2}
\frac{e^{-2|y(t)|}}{|y(t)|^2}+ |\dot y(t)| + \biggl[\int V(x)|u(t,x)|^2\,dx\biggr]^{\frac12} \lesssim \delta(t) \sim \|g(t)\|_{H^1}
\end{equation}
for all $t\in I_0.$
\end{proposition}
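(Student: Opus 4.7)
The plan is to carry out the classical modulation analysis near the soliton orbit, suitably adapted to the presence of the potential $V$.

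\emph{Step 1: Proximity to the orbit.} First I would show that as $\delta(t)\to 0$,
\[
\inf_{\theta\in\R,\,y\in\R^3}\|u(t)-e^{i\theta}Q(\cdot-y)\|_{H^1}\to 0.
\]
The argument is by contradiction and concentration compactness applied to $u(t_n)$ along a failing sequence: the decoupling identities in Proposition~\ref{P:LPD}, together with $M(u)=M(Q)$, $E_V(u)=E_0(Q)$, $\delta(t_n)\to 0$, the sharp Gagliardo--Nirenberg inequality, and Lemma~\ref{no-optimizers}, force exactly one profile, which must be a translate/phase rotate of $Q$ with diverging spatial center (so that the potential contribution to the energy vanishes in the limit).

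\emph{Step 2: Orthogonal decomposition and coercivity.} The implicit function theorem then yields $C^1$ parameters $(\theta(t),y(t))$ determined by the orthogonality conditions
\[
\Re\langle g,iQ(\cdot-y)\rangle=0,\qquad \Re\langle g,\nabla Q(\cdot-y)\rangle=0,
\]
where $g:=e^{-i\theta}u-Q(\cdot-y)$. Expanding the identities $M(u)=M(Q)$ and $E_V(u)=E_0(Q)$ around $Q(\cdot-y)$, using $-\Delta Q+Q=Q^3$, and absorbing the linear-in-$g$ terms via the mass identity $2\Re\langle g,Q(\cdot-y)\rangle=-\|g\|_{L^2}^2$ together with the orthogonality conditions, one obtains the key relation
\[
\int V(x)|u(t,x)|^2\,dx+\langle L_+g_1,g_1\rangle+\langle L_-g_2,g_2\rangle=O(\|g\|_{H^1}^3),
\]
where $L_\pm$ are the standard linearized operators at $Q$ and $g=g_1+ig_2$. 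Now $L_-$ is coercive thanks to $g_2\perp Q(\cdot-y)$, while $L_+$ is coercive modulo its one-dimensional negative direction $\Lambda Q(\cdot-y)$ with $\Lambda Q:=\tfrac32 Q+x\cdot\nabla Q$. The $\Lambda Q$-component $\beta$ of $g_1$ is not controlled by the orthogonalities but is controlled by $\delta(t)$ directly via the defining expansion $\|u\|_{\dot H_V^1}^2-\|Q\|_{\dot H^1}^2=-\delta(t)$, which yields $\beta=O(\delta(t))$. Combining with Weinstein-type coercivity of $L_+$ on the orthogonal complement of $\mathrm{span}(\Lambda Q(\cdot-y),\nabla Q(\cdot-y))$, one extracts
\[
\|g\|_{H^1}\sim \delta(t)\qquad\text{and}\qquad \int V(x)|u(t,x)|^2\,dx\lesssim \delta(t)^2.
\]

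\emph{Step 3: Dynamic and pointwise bounds.} To bound $|\dot y|$ I would differentiate the translation orthogonality in $t$, substitute the equation for $g$ obtained by inserting the decomposition into \eqref{nls}, and solve the resulting almost-diagonal linear system for $(\dot\theta,\dot y)$. The right-hand side is dominated by $\|g\|_{H^1}+(\int V|u|^2)^{1/2}\lesssim \delta(t)$, giving $|\dot y|\lesssim \delta(t)$. For the pointwise bound, I would use the explicit asymptotics $Q(x)\lesssim |x|^{-1}e^{-|x|}$ for $|x|\geq 1$: for $|y|$ large, $Q(x-y)^2\gtrsim e^{-2|y|}/|y|^2$ uniformly for $x$ in any fixed bounded set. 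Choosing such a set on which $V$ admits a positive lower bound (any ball avoiding the origin for \eqref{V3}; any fixed ball where $V$ is not identically zero for \eqref{V1}--\eqref{V2}) gives $\int V(x)Q(x-y)^2\,dx\gtrsim e^{-2|y|}/|y|^2$, and combining with $u\approx Q(\cdot-y)$ in $H^1$ and $\int V|u|^2\lesssim \delta^2\leq \delta$ yields the claim.

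\emph{Main obstacle.} The subtle step is the coercivity in Step 2: the scaling direction $\Lambda Q$ is a genuine negative direction of $L_+$ that survives the phase and translation orthogonalities, and the corresponding component of $g$ is precisely of size $\delta$, not smaller. The resolution is that this component is controlled directly by $\delta$ via the identity $\delta=\|Q\|_{\dot H^1}^2-\|u\|_{\dot H_V^1}^2$ rather than by any coercivity estimate. Careful bookkeeping of the $V$-dependent cross terms in the expansion of the conservation laws is what yields the genuinely quadratic bound $\int V|u|^2\lesssim \delta^2$ (rather than merely $\lesssim \delta$), which is in turn exactly what feeds into the pointwise exponential estimate on $|y|$.
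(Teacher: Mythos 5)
Your overall architecture — non-quantitative proximity to the soliton orbit, implicit function theorem to impose orthogonality, expansion of conserved quantities to obtain a coercive quadratic form with one uncontrolled direction tied directly to $\delta$, then a differentiated-orthogonality argument for $|\dot y|$ and exponential decay of $Q$ for the $|y|$ bound — is the same as the paper's. However, there are two places where your proposal diverges in a way that deserves comment, one benign and one a genuine gap.

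First, in Step~1 you invoke the profile decomposition (Proposition~\ref{P:LPD}) to show that $\delta(t)\to0$ forces $u(t)$ to approach the orbit of $Q$. This works, but the paper does it much more cheaply: since $M(u)=M(Q)$, $E_V(u)=E_0(Q)$, and $\|u\|_{\dot H^1}\leq\|u\|_{\dot H_V^1}\to\|Q\|_{\dot H^1}$ all hold, the Gagliardo--Nirenberg ratio $\|u\|_{L^2}\|u\|_{\dot H^1}^3/\|u\|_{L^4}^4$ is squeezed to the optimal constant $C_0^{-1}$, so $u(t_n)$ is an optimizing sequence for \eqref{sharp-GN} with $V=0$, and one invokes the well-known compactness of such sequences directly. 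No profile decomposition is needed. Your route is valid but considerably heavier than necessary.

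Second, and more seriously, the coercivity statement in Step~2 is not correct as written, and fixing it reveals a circularity in the bound on the ``bad'' coefficient. You claim $L_+$ is coercive on the orthogonal complement of $\mathrm{span}(\Lambda Q,\nabla Q)$ with $\Lambda Q=\tfrac32 Q+x\cdot\nabla Q$. For the 3d cubic (mass-supercritical) NLS, $L_+$ has a genuine negative eigenvalue whose eigenfunction is \emph{not} $\Lambda Q$, and the statement you quote (which resembles the mass-critical Weinstein estimate, where $\Lambda Q$ sits in the generalized kernel of $L_+$) does not apply. The correct coercivity result in this setting is \cite[Proposition~2.7]{DR}: positivity on the subspace where $g_1\perp\nabla Q$, $g_1\perp\Delta Q$, and $g_2\perp Q$. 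This is why the paper defines $\alpha=\langle g_1(\cdot+y),\Delta Q\rangle/\langle Q,\Delta Q\rangle$ and splits $g=\alpha Q(\cdot-y)+h$: the remainder $h_1(\cdot+y)$ automatically lands in the coercive subspace, and $\alpha$ captures precisely the $\Delta Q$-pairing. This matters because the expansion of $\delta$ reads
\begin{align*}
-\delta(t) &= 2\Re\langle\nabla g,\nabla Q(\cdot-y)\rangle + \|\nabla g\|_{L^2}^2 + \int V|u|^2\,dx \\
&= -2\langle g_1(\cdot+y),\Delta Q\rangle + O\bigl(\|g\|_{H^1}^2 + \smallint V|u|^2\bigr),
\end{align*}
so the quantity $\delta$ controls $\langle g_1(\cdot+y),\Delta Q\rangle$, not the $\Lambda Q$-component of $g_1$. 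If you decompose $g_1(\cdot+y)=\beta\,\Lambda Q+k_1$ with $k_1\perp\Lambda Q$, the cross term $\langle k_1,\Delta Q\rangle$ is not killed by your orthogonalities, so $\beta=O(\delta)$ does not follow from the $\delta$-expansion alone — you would need $\|k_1\|\lesssim\delta$ first, which is exactly what you are trying to prove. The paper avoids this circularity precisely by tying the decomposition direction to $\Delta Q$ rather than $\Lambda Q$, and then closing the loop via the mass identity ($|\langle Q(\cdot-y),h_1\rangle|\lesssim|\alpha|+\|h\|_{L^2}^2$, hence $\|h\|\lesssim|\alpha|$ and $|\alpha|\lesssim\|h\|$). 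You should replace $\Lambda Q$ by the $Q$-direction (measured against $\Delta Q$) and invoke \cite[Proposition~2.7]{DR} explicitly. Once that is done, the quadratic bound $\int V|u|^2\lesssim\delta^2$ falls out automatically — there is no separate issue of ``$V$-dependent cross terms'' beyond the standard Lyapunov bookkeeping, since the potential term enters the Lyapunov identity linearly with a favorable sign.

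Step~3 is essentially the paper's argument and is fine.
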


The starting point is to show (in a non-quantitative way) that if $v\in H^1$ obeys $M(v)=M(Q)$ and $E_V(v)=E_0(Q)$, and $\delta(v)$ is small, then $v$ must be close to the orbit of $Q$ .  Here the orbit of $Q$ refers to the set
\[
\{e^{i\theta}Q(\cdot-y):\theta\in\R,\ y\in\R^3\}.
\]

\begin{lemma}[Modulation, non-quantitative version]\label{L:mod-nq} Suppose $\{v_n\}\subset H^1$ obeys 
\[
M(v_n)=M(Q),\quad E_V(v_n)=E_0(Q), \qtq{and} \delta(v_n)\to 0.
\]
Then there exist $\theta_n\in\R$ and $|y_n|\to\infty$ so that
\[
\|v_n- e^{i\theta_n}Q(\cdot-y_n)\|_{H^1} \to 0.
\]
\end{lemma}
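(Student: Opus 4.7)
The plan is to apply the linear profile decomposition (Proposition~\ref{P:LPD}) to $\{v_n\}$ and combine it with the sharp Gagliardo--Nirenberg inequality and Lemma~\ref{no-optimizers} to show that the decomposition consists of exactly one profile, which is a translate of $Q$ sent to infinity. Write $v_n = \sum_{j=1}^J \phi_n^j + r_n^J$ with $\phi_n^j = e^{it_n^j H}[\phi^j(\cdot - x_n^j)]$, and set
\[
m_j = \lim_{n\to\infty} M(\phi_n^j), \quad \beta_j = \lim_{n\to\infty} \|\phi_n^j\|_{\dot H_V^1}^2, \quad \gamma_j = \lim_{n\to\infty} \|\phi_n^j\|_{L^4}^4,
\]
with analogous $M_R, \beta_R, \gamma_R$ for the remainder (as $J \to J^*$). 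By direct computation, $\beta_j = \|\phi^j\|_{\dot H^1}^2$ if $|x_n^j|\to\infty$ (the translated potential contribution vanishes) and $\beta_j = \|\phi^j\|_{\dot H_V^1}^2$ if $x_n^j \equiv 0$; moreover $\gamma_j = 0$ if $t_n^j \to \pm\infty$, using $\|e^{-itH}\psi\|_{L^4} \to 0$ as $|t|\to\infty$ for $\psi \in H^1$ (proved by density of $C_c^\infty$ in $H^1$ and the dispersive estimate for $e^{-itH}$). The decoupling identities \eqref{decouple1}--\eqref{decouple2}, combined with the hypotheses $M(v_n)=M(Q)$, $\delta(v_n)\to 0$, and $E_V(v_n)=E_0(Q)$, yield
\[
\sum_j m_j + M_R = M(Q), \quad \sum_j \beta_j + \beta_R = \|Q\|_{\dot H^1}^2, \quad \sum_j \gamma_j + \gamma_R = \|Q\|_{L^4}^4.
\]

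Next I would apply the sharp GN inequality with sharp constant $C_0$ (Lemma~\ref{no-optimizers}) to each $\phi_n^j$ and to $r_n^J$, pass to the limit, and chain with Cauchy--Schwarz and the elementary inequality $\sum_j b_j^3 + b_R^3 \leq (\sum_j b_j + b_R)^3$ for nonnegative reals to obtain
\[
\|Q\|_{L^4}^4 \;\leq\; C_0 \Big( \sum_j \sqrt{2m_j}\,\beta_j^{3/2} + \sqrt{2M_R}\,\beta_R^{3/2} \Big) \;\leq\; C_0 \sqrt{2M(Q)}\,\|Q\|_{\dot H^1}^3 \;=\; \|Q\|_{L^4}^4,
\]
the final equality being the sharp GN saturation at $Q$. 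Equality throughout forces exactly one of $\{\beta_j\} \cup \{\beta_R\}$ to be nonzero. Since every nonzero profile automatically satisfies $\beta_j > 0$ (as $V \geq 0$ and $\phi^j \not\equiv 0$), and the scenario $J^* = 0$ with $\beta_R = \|Q\|_{\dot H^1}^2$ is ruled out by a refined Strichartz/Sobolev argument (vanishing of $\|e^{-itH}v_n\|_{L_{t,x}^5}$ would be incompatible with $\|v_n\|_{L^4}^4 \to \|Q\|_{L^4}^4 > 0$), we conclude $J^* = 1$ with $\beta_1 = \|Q\|_{\dot H^1}^2$ and $\beta_R = 0$. Equality in Cauchy--Schwarz then yields $M_R = 0$ and $m_1 = M(Q)$.

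It remains to identify the unique profile $\phi^1$. Since $\gamma_1 = \|Q\|_{L^4}^4 > 0$, the case $t_n^1 \to \pm\infty$ is excluded, so $t_n^1 \equiv 0$; and since equality in sharp GN at $\phi^1$ with the $\dot H_V^1$-norm would contradict Lemma~\ref{no-optimizers}, the case $x_n^1 \equiv 0$ is also excluded, so $|x_n^1| \to \infty$. Hence $\phi^1$ saturates the Euclidean sharp GN with $\|\phi^1\|_{L^2} = \|Q\|_{L^2}$ and $\|\phi^1\|_{\dot H^1} = \|Q\|_{\dot H^1}$, pinning down $\phi^1 = e^{i\theta_0} Q(\cdot - y_0)$ for some $\theta_0 \in \R$, $y_0 \in \R^3$. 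Setting $\theta_n = \theta_0$ and $y_n = x_n^1 + y_0$ gives $|y_n| \to \infty$, and the remainder $r_n^1 = v_n - e^{i\theta_n} Q(\cdot - y_n)$ tends to zero in $L^2$ (by $M_R=0$) and in $\dot H^1$ (by $\beta_R = 0$ together with $\|r_n^1\|_{\dot H^1}^2 \leq \|r_n^1\|_{\dot H_V^1}^2$ since $V \geq 0$), which completes the proof. The main obstacle will be the careful bookkeeping of the three classes of profiles -- translated ($|x_n^j|\to\infty$), untranslated ($x_n^j \equiv 0$), and dispersed ($t_n^j\to\pm\infty$) -- and how each interacts with the $\dot H_V^1$- and $L^4$-decouplings; together with verifying that the sharp GN saturation is absorbed by a genuine profile rather than by the remainder.
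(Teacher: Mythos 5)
Your proof is correct, but it takes a substantially different and heavier route than the paper's. The paper's argument is short and elementary: because $\delta(v_n)\to 0$ and $E_V(v_n)=E_0(Q)$, one reads off $\|v_n\|_{L^4}\to\|Q\|_{L^4}$, and then the chain
\[
C_0^{-1} \leq \frac{\|v_n\|_{L^2}\|v_n\|_{\dot H^1}^3}{\|v_n\|_{L^4}^4} \leq \frac{\|v_n\|_{L^2}\|v_n\|_{\dot H_V^1}^3}{\|v_n\|_{L^4}^4} \to C_0^{-1}
\]
shows that $v_n$ is an optimizing sequence for the \emph{Euclidean} Gagliardo--Nirenberg inequality, whereupon the classical Lions-type characterization of GN optimizing sequences (plus the mass constraint to pin the scaling parameter) gives $e^{-i\theta_n}v_n(\cdot+y_n)\to Q$; the vanishing of $\int V|v_n|^2$ then forces $|y_n|\to\infty$. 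You instead invoke the $H$-adapted linear profile decomposition (Proposition~\ref{P:LPD}), apply sharp GN termwise, and force equality via Cauchy--Schwarz and $\sum b_j^3\leq(\sum b_j)^3$. This is valid and has a certain appeal because it explicitly displays that all the $L^4$ mass must concentrate in a single translated profile, but it is considerably more machinery for the same conclusion; and it still relies, at the last step, on the classical characterization of GN optimizers (now applied to the single function $\phi^1$), so the heavy machinery does not avoid that ingredient. Two points in your write-up deserve care: (i) ruling out $J^*=0$ does require the inverse Strichartz/refined Sobolev fact that Strichartz vanishing of $e^{-itH}r_n^0$ together with $H^1$ boundedness forces $\|r_n^0\|_{L^4}\to 0$ --- you name this but don't argue it, and it is not entirely trivial; (ii) the claim that $t_n^j\to\pm\infty$ forces $\gamma_j=0$ rests on $\|e^{-itH}\psi\|_{L^4}\to 0$ as $|t|\to\infty$ for $\psi\in H^1$, which is exactly the fact the paper cites from \cite{KMVZ,Hong} in its dichotomy argument, so you should cite it rather than wave at the dispersive estimate. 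Also recall that the profile decomposition is extracted along a subsequence; as usual, a ``subsequence of a subsequence'' reduction upgrades your conclusion to the full sequence, but this should be stated.
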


\begin{proof} Arguing as in the proof of Corollary~\ref{C:global}, we see that if
\[
E_V(v_n)=E_0(Q) \qtq{and} \|v_n\|_{\dot H_V^1} \to \|Q\|_{\dot H^1}, 
\]
then $\|v_n\|_{L^4} \to \|Q\|_{L^4}$.  Thus, by the sharp Gagliardo--Nirenberg inequality \eqref{sharp-GN} (with $V=0$), 
\[
C_0^{-1}\leq \frac{\|v_n\|_{L^2}\|v_n\|_{\dot H^1}^3}{\|v_n\|_{L^4}^4} \leq \frac{\|v_n\|_{L^2} \|v_n\|_{\dot H_V^1}^3}{\|v_n\|_{L^4}^4} \to \frac{\|Q\|_{L^2}\|Q\|_{\dot H^1}^3}{\|Q\|_{L^4}^4} = C_0^{-1},
\]
In particular, $v_n$ is an optimizing sequence for sharp Gagliardo--Nirenberg, which (together with the mass constraint) yields
\[
e^{-i\theta_n}v_n(\cdot+y_n)\to Q \qtq{for some}\theta_n\in\R,\quad y_n\in\R^3.
\]
Note that the above inequalities also show that 
\[
\|v_n\|_{\dot H_V^1}-\|v_n\|_{\dot H^1} \to 0,\qtq{i.e.} \int V(x)|v_n(x)|^2\,dx \to 0. 
\]
This in turn implies
\[
\int V(x) |Q(x-y_n)|^2\,dx \to 0,\qtq{and hence}|y_n|\to\infty. 
\]
\end{proof}
%
%

This result is still far from what is claimed in Proposition~\ref{P:modulation2}.  Indeed, Proposition~\ref{P:modulation2} claims that if $\delta(t)$ is small, then by choosing $(\theta(t),y(t))$ appropriately we may make the $H^1$-norm of the error term
\begin{equation}\label{def:g}
g(t) = e^{-i\theta(t)}[u(t)-e^{i\theta(t)}Q(\cdot-y(t))]
\end{equation}
comparable to $\delta(t)$ itself.  The problem of obtaining bounds on $g(t)$ is reminiscent of the problem of orbital stability (see e.g. \cite{Weinstein}).  Accordingly, as a first attempt we might try to combine the mass and energy as a kind of Lyapunov functional.  In particular, a direct computation (with more details provided in Lemma~\ref{L:boundsI} below) yields
\begin{equation}\label{Lyapunov}
\begin{aligned}
0 &= E_V(u)+M(u) - [E_0(Q)+M(Q)] \nonumber\\
&  =  B(g(\cdot+y),g(\cdot+y)) + \tfrac12\int V(x)|u|^2\,dx + \mathcal{O}\{\|g\|_{H^1}^3 + \|g\|_{H^1}^4\}, 
\end{aligned}
\end{equation}
where $B(\cdot,\cdot)$ is the usual bilinear form arising from the linearization around the ground state, that is,
\begin{equation}\label{Bgg}
\begin{aligned}
B(g,g) & = \int \tfrac12 |\nabla g|^2 + \tfrac12 |g|^2 - (\tfrac32g_1^2+\tfrac12 g_2^2)Q^2\,dx \\
& =: \tfrac12\langle L_+ g_1,g_1\rangle + \tfrac12\langle L_- g_2,g_2\rangle,
\end{aligned}
\end{equation}
where $g=g_1+ig_2$. 

Continuing the analogy with proofs of orbital stability, we might now hope to obtain bounds on $g$ by imposing conditions on $(\theta,y)$ that yield coercivity in the quadratic term.  Evidently, this is impossible.  Indeed, noting that we may already arrange that $\|g\|_{H^1}\ll 1$, we see from \eqref{Lyapunov} that
\[
\|g\|_{H^1}^2 + \int V(x)|u|^2\,dx \leq 0.
\]
We may also observe that the subspaces on which $B(\cdot,\cdot)$ are known to be positive involve at least five constraints (see e.g. \cite{DR}), whereas $(\theta,y)\in \R^4$ only afford us four.  Nonetheless, these parameters will be sufficient to impose orthogonality to the kernels of $L_{\pm}$, which are well-known:
\begin{equation}\label{kernels}
\ker L_- = \text{span}\{Q\}\qtq{and} \ker L_+ = \text{span}\{\partial_j Q\}_{j=1}^3
\end{equation}
(see e.g. \cite{Weinstein}).  We can then further decompose $g$ into a part belonging to a positive subspace of $B(\cdot,\cdot)$ and a remainder term.  Essentially, the size of the remainder dictates what estimates we may obtain for $g$. 

Our first task is therefore to show that we may choose modulation parameters to impose orthogonality to the subspaces appearing in \eqref{kernels}.

\begin{lemma}[Modulation, with orthogonality]\label{L:modulation-orthogonality} If $\delta_0>0$ is sufficiently small, then we may define functions $\theta:I_0\to\R$ and $y:I_0\to\R^3$ so that
\begin{equation}\label{nq-small}
\|u(t) - e^{i\theta(t)}Q(\cdot-y(t))\|_{H^1}\ll 1 
\end{equation}
and 
\begin{equation}\label{orthogonality-conditions}
\Im\langle e^{i\theta(t)}Q(\cdot-y(t),u(t)\rangle = \Re\langle e^{i\theta(t)}\partial_j Q(\cdot-y(t)),u(t)\rangle =  0
\end{equation}
for $j\in\{1,2,3\}$. \end{lemma}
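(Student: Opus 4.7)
The plan is to apply the implicit function theorem to a four-dimensional map that encodes the orthogonality conditions, using Lemma~\ref{L:mod-nq} to guarantee that we are always close enough to the orbit of $Q$ to invoke it. Define
\[
F:H^1\times\R\times\R^3\to\R^4,\qquad F(v,\theta,y) = \bigl( \Im\langle e^{i\theta}Q(\cdot-y),v\rangle,\ \Re\langle e^{i\theta}\partial_j Q(\cdot-y),v\rangle\bigr)_{j=1,2,3}.
\]
The orthogonality conditions \eqref{orthogonality-conditions} are precisely $F(u(t),\theta(t),y(t))=0$. Note that $F$ is jointly equivariant: $F(e^{i\alpha}v(\cdot-a),\theta,y) = F(v,\theta-\alpha,y-a)$, so its qualitative behavior depends only on the distance from $v$ to the orbit of $Q$.

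The first step is to verify that $F(e^{i\theta_0}Q(\cdot-y_0),\theta_0,y_0)=0$, which follows from $Q$ being real (for the first component) and from $\langle \partial_j Q,Q\rangle = \tfrac12\int \partial_j(Q^2)\,dx = 0$ (for the other three). The second and main step is to compute the Jacobian $\partial_{(\theta,y)}F$ at such a point. By phase/translation equivariance it suffices to evaluate at $(Q,0,0)$, where a direct calculation yields the block-diagonal matrix
\[
D = \begin{pmatrix} -\|Q\|_{L^2}^2 & 0 \\ 0 & \text{diag}(\|\partial_j Q\|_{L^2}^2) \end{pmatrix},
\]
since mixed terms vanish from the identities $\langle \partial_j Q,Q\rangle=0$ and $\langle \partial_j Q,\partial_k Q\rangle = \tfrac13\|\nabla Q\|_{L^2}^2\delta_{jk}$ (by radiality of $Q$). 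In particular $D$ is invertible with a bound independent of the base point, so the implicit function theorem applies uniformly: there exist constants $\eta,\delta^\sharp>0$ such that whenever $\|v-e^{i\theta_0}Q(\cdot-y_0)\|_{H^1}<\eta$, there is a unique $(\theta,y)$ with $|\theta-\theta_0|+|y-y_0|<\delta^\sharp$ solving $F(v,\theta,y)=0$, and $(\theta,y)$ depends continuously (indeed smoothly) on $v\in H^1$.

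To conclude, we need to ensure that for $\delta_0$ small and $t\in I_0$, the solution $u(t)$ is within $\eta$ of the orbit of $Q$; Lemma~\ref{L:mod-nq} does exactly this by a standard contradiction argument (since $M(u(t))=M(Q)$ and $E_V(u(t))=E_0(Q)$ throughout, and $\delta(t)<\delta_0$). Pick, for each $t\in I_0$, some $(\theta_0(t),y_0(t))$ with $\|u(t)-e^{i\theta_0(t)}Q(\cdot-y_0(t))\|_{H^1}<\eta$; the IFT then produces $(\theta(t),y(t))$ satisfying \eqref{orthogonality-conditions}, and the bound \eqref{nq-small} follows from the triangle inequality together with the IFT estimate $|\theta(t)-\theta_0(t)|+|y(t)-y_0(t)|\lesssim \eta$. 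Since $u:[0,\infty)\to H^1$ is continuous, the local uniqueness from IFT upgrades to continuity of $(\theta,y)$ on each connected component of $I_0$.

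The only real subtlety is that Lemma~\ref{L:mod-nq} forces $|y_0(t)|\to\infty$ as $\delta(t)\to 0$, so one must check that the IFT neighborhood does not shrink as $y_0\to\infty$; this is precisely where equivariance of $F$ under spatial translation is essential, and why the Jacobian computation can be reduced once and for all to the base point $(Q,0,0)$. No quantitative bound on $g(t)$ is obtained at this stage — that is deferred to the coercivity analysis on the subspace cut out by \eqref{orthogonality-conditions} together with the Lyapunov identity \eqref{Lyapunov}.
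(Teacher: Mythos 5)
Your proof is correct and takes essentially the same route as the paper: the same map $\Phi=F$, the same diagonal Jacobian in $(\theta,y)$, and the implicit function theorem applied around the crude centering $(\theta_0(t),y_0(t))$ supplied by Lemma~\ref{L:mod-nq}. Making the phase/translation equivariance of $F$ explicit (which is exactly why the IFT radius is uniform even as $|y_0(t)|\to\infty$) is a useful clarification of the paper's terser "independent of $t$" remark, and your Jacobian entries $\langle\partial_jQ,\partial_kQ\rangle=\tfrac13\delta_{jk}\|\nabla Q\|_{L^2}^2>0$ are in fact the correct sign and constant (the paper's $-\delta_{jk}\|\nabla Q\|_{L^2}^2$ has a sign slip and drops the $\tfrac13$, though neither affects invertibility).
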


\begin{proof}  Let $\eps>0$ be a small parameter to be specified below.  By Lemma~\ref{L:mod-nq}, for $\delta_0=\delta_0(\eps)$ sufficiently small and $t\in I_0$, we may find $(\theta_0(t),y_0(t))\in\R^4$ such that 
\begin{equation}\label{initial-small}
\|u(t) - e^{i\theta_0(t)}Q(\cdot-y_0(t))\|_{H^1}<\eps.
\end{equation}
We will construct the parameters $(\theta(t),y(t))$ via the implicit function theorem.  To this end, we define the function 
\[
\Phi:H^1\times\R^4\to\R^4
\] 
by
\[
\Phi(v,z) = \bigl(\Im\langle e^{i\theta}Q(\cdot-y),v\rangle,\ \Re \langle e^{i\theta}\nabla Q(\cdot-y),v\rangle\bigr),\qtq{where} z=(\theta,y).
\]
Setting 
\[
(v_0,z_0)=(v_0(t),z_0(t)):=(e^{i\theta_0(t)}Q(\cdot-y_0(t)),\theta_0(t),y_0(t)),
\]
we observe that by construction, we have
\begin{equation}\label{curve-of-zeros}
\Phi(v_0,z_0) \equiv 0. 
\end{equation}

Our task is therefore to compute the derivatives $\partial_{z_k}\Phi_j$ and evaluate at $(v_0,z_0)$.

First,
\[
\partial_{z_1}\Phi_1 = -\Re\langle e^{i\theta}Q(\cdot-y),v\rangle \implies \partial_{z_1}\Phi_1|_{(v_0,z_0)}=-\|Q\|_{L^2}^2. 
\]
On the other hand, for $j\in\{2,3,4\}$, we compute
\[
\partial_{z_1} \Phi_j = \Im\langle e^{i\theta}\partial_{j-1} Q(\cdot-y),v\rangle \implies \partial_{z_1}\Phi_j|_{(v_0,z_0)}=0. 
\] 

Next, for $k\in\{2,3,4\}$, 
\[
\partial_{z_k}\Phi_1 = -\Im\langle e^{i\theta}\partial_{k-1}Q(\cdot-y),v\rangle \implies \partial_{z_k}\Phi_1|_{(v_0,z_0)}=0.
\]
On the other hand $j,k\in\{2,3,4\}$, we use the fact that $Q$ is radial (so that $\langle \partial_j Q,\partial_k Q\rangle =0$ for $j\neq k$) to obtain
\[
\partial_{z_k} \Phi_j =-\Re\langle e^{i\theta}\partial_{k-1}\partial_{j-1}Q(\cdot-y),v\rangle \implies \partial_{z_k}\Phi_j|_{(v_0,z_0)} = -\delta_{jk}\|\nabla Q\|_{L^2}^2.
\] 

It follows that $(\partial_{z_k}\Phi_j)|_{(v_0,z_0)}$ is independent of $t\in I_0$ and boundedly invertible.  We may therefore apply the implicit function theorem to the entire family of zeros of $\Phi$ given in \eqref{curve-of-zeros}.  In particular, choosing $\eta=\eta(Q)>0$ and $\eps=\eps(\eta)>0$ sufficiently small, we obtain that for each $t\in I_0$ there exists a function
\[
\zeta_t: B_\eps(e^{i\theta_0(t)}Q(\cdot-y_0(t)))\subset H^1\to B_\eta((\theta_0(t),y_0(t))\subset\R^4
\]
so that 
\[
\Phi(v,\zeta_t(v)) =0 \qtq{for all}v\in B_\eps(e^{i\theta_0(t)}Q(\cdot-y_0(t)). 
\]

As \eqref{initial-small} yields $u(t)\in B_\eps(e^{i\theta_0(t)}Q(\cdot-y_0(t))$ for all $t\in I_0$, we may choose
\[
(\theta(t),y(t))=\zeta_t(u(t)). 
\] 
With this choice we obtain the desired orthogonality conditions \eqref{orthogonality-conditions}, while \eqref{initial-small} and the fact that $|(\theta_0(t),y_0(t))-(\theta(t),y(t))|\ll1$ yield \eqref{nq-small}. \end{proof}

Defining $(\theta(t),y(t))$ as in Lemma~\ref{L:modulation-orthogonality} and
\begin{equation}\label{g-again}
g(t) = g_1(t)+ig_2(t) = e^{-i\theta(t)}[u(t)-e^{i\theta(t)}Q(\cdot-y(t))],
\end{equation}
we therefore have the following orthogonality conditions for $g$:
\begin{equation}\label{orthogonality-g}
\langle g_2(t),Q(\cdot-y(t))\rangle = \langle g_1(t),\partial_j Q(\cdot-y(t))\rangle \equiv 0 \qtq{for}j\in\{1,2,3\}.
\end{equation}

We turn to the problem of establishing quantitative estimates.

\begin{lemma}[Bounds, part I]\label{L:boundsI} Let $(\theta(t),y(t))$ be as in Lemma~\ref{L:modulation-orthogonality} and $g(t)$ be as in \eqref{g-again}.  Then
\begin{equation}\label{bounds-part1}
\biggl[\int V(x)|u(t,x)|^2\,dx\biggr]^{\frac12} \lesssim \delta(t) \sim \|g(t)\|_{H^1}
\end{equation}
for $t\in I_0$. 
\end{lemma}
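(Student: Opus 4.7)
The plan is to prove the two claims in sequence: first $\int V|u|^2 \lesssim \delta^2$ from the sharp Gagliardo--Nirenberg inequality alone, and then the comparison $\delta \sim \|g\|_{H^1}$ via a Lyapunov identity combined with Weinstein coercivity for $L_\pm$.

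For the first bound I would use only that $M(u)=M(Q)$ and $E_V(u)=E_0(Q)$, which yield the explicit relations
\[
\|u\|_{L^2}=\|Q\|_{L^2},\quad \|u\|_{L^4}^4=\|Q\|_{L^4}^4-2\delta,\quad \|u\|_{\dot H^1}^2=\|Q\|_{\dot H^1}^2-\delta-\int V|u|^2.
\]
Plugging into $\|u\|_{L^4}^4\leq C_0\|u\|_{L^2}\|u\|_{\dot H^1}^3$ and Taylor expanding $(1-x)^{3/2}$ at $x=0$, the leading terms in $\delta+\int V|u|^2$ cancel thanks to the Pohozaev identity $\|Q\|_{L^4}^4=\tfrac{4}{3}\|Q\|_{\dot H^1}^2$, leaving
\[
\int V|u|^2 \leq C\Bigl(\delta+\int V|u|^2\Bigr)^2 + O\Bigl(\bigl(\delta+\int V|u|^2\bigr)^3\Bigr).
\]
A standard bootstrap then forces $\int V|u|^2 \lesssim \delta^2$ provided $\delta_0$ is small.

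For $\delta\sim\|g\|_{H^1}$, I would first expand $\delta=\|Q\|_{\dot H^1}^2-\|u\|_{\dot H_V^1}^2$ directly using $u=e^{i\theta}[Q_y+g]$, with $Q_y:=Q(\cdot-y)$. Integrating by parts and invoking the ground state equation $-\Delta Q+Q-Q^3=0$ and the mass identity $2\int Q_y g_1=-\|g\|_{L^2}^2$, one obtains the clean algebraic identity
\[
\delta = -2\int Q_y^3 g_1\,dx - \|g\|_{H^1}^2 - \int V|u|^2,
\]
from which $\delta\lesssim \|g\|_{H^1}$ follows at once by estimating $|\int Q_y^3 g_1|\leq \|Q^3\|_{L^2}\|g\|_{L^2}$. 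For the reverse direction, I would expand the functional $G_V:=M+E_V$ around $Q_y$ through second order; by the ground state equation the linear-in-$g$ contribution vanishes, producing the Lyapunov identity
\[
B^y(g,g) + \tfrac12 \int V|u|^2 = O(\|g\|_{H^1}^3),
\]
where $B^y(g,g)=\tfrac12\langle L_+^y g_1,g_1\rangle+\tfrac12\langle L_-^y g_2,g_2\rangle$ and $L_\pm^y=-\Delta+1-(3\text{ or }1)Q_y^2$. The modulation orthogonalities $\langle g_1,\partial_j Q_y\rangle=\langle g_2,Q_y\rangle=0$ combined with $\langle g_1,Q_y\rangle=-\tfrac12\|g\|_{L^2}^2$ from mass permit a decomposition $g_1=aQ_y+h$ with $a=O(\|g\|_{L^2}^2)$ and $h\perp_{L^2}Q_y,\partial_j Q_y$, so that Weinstein coercivity yields $B^y(g,g)\gtrsim \|g\|_{H^1}^2-O(\|g\|_{H^1}^3)$. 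Combining with the Lyapunov identity and the bound $\int V|u|^2\lesssim \delta^2$ already established produces $\|g\|_{H^1}^2\lesssim \delta^2+\|g\|_{H^1}^3$, closing to $\|g\|_{H^1}\lesssim \delta$ for $\delta_0$ small.

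I expect the main obstacle to be justifying the required coercivity $\langle L_+^y h,h\rangle \gtrsim \|h\|_{H^1}^2$ on $\{h\perp Q_y,\partial_j Q_y\}$ in the mass-supercritical $3$d cubic setting, where $L_+$ possesses a negative direction consistent with $\langle L_+ Q,Q\rangle=-2\|Q\|_{L^4}^4<0$. The orthogonality $h\perp Q$ plays the crucial role of killing this direction; should an additional generalized orthogonality be needed, the outstanding projection on the negative direction can be pinned down using the algebraic identity for $\delta$ above, which controls $\int Q_y^3 g_1=\langle Q_y,g_1\rangle_{H^1}$ directly by $\delta$. The remaining modulation bounds on $|\dot y|$ and $e^{-2|y|}/|y|^2$ lie outside this lemma and are treated in the subsequent Part II bound.
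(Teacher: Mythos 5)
The first half of your proposal is a genuinely nice alternative to the paper's route. The paper does not derive $\int V|u|^2\lesssim\delta^2$ directly from sharp Gagliardo--Nirenberg; instead it obtains $\int V|u|^2\lesssim\alpha^2$ as a byproduct of its Lyapunov argument and then shows $|\alpha|\sim\delta$ at the end. Your direct GN computation is clean and correct (one must also rule out the regime $\delta+\int V|u|^2\gtrsim\|Q\|_{\dot H^1}^2$, but that is easy since then $\|u\|_{\dot H^1}$ drops well below $\|Q\|_{\dot H^1}$ while $\|u\|_{L^4}$ does not, contradicting GN for $\delta_0$ small). Your algebraic identity $\delta=-2\int Q_y^3g_1-\|g\|_{H^1}^2-\int V|u|^2$ is also correct and immediately gives the easy direction $\delta\lesssim\|g\|_{H^1}$.

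The gap is in the coercivity step. The $3d$ cubic NLS is $L^2$-\emph{supercritical}, and in that regime the constrained form $\langle L_+h,h\rangle$ on $\{h\perp Q,\ h\perp\partial_jQ\}$ is \emph{not} nonnegative. This is precisely the content of the Grillakis--Shatah--Strauss instability criterion: with $Q_\omega=\omega^{1/(p-1)}Q(\sqrt\omega\,\cdot)$ one has $L_+\partial_\omega Q_\omega=-Q$, and $\langle\partial_\omega Q_\omega,Q\rangle=\bigl[\tfrac1{p-1}-\tfrac d4\bigr]\|Q\|_{L^2}^2=-\tfrac14\|Q\|_{L^2}^2<0$ for $(p,d)=(3,3)$, which is exactly the sign condition that destroys positivity on $Q^\perp$. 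So your primary claim ``the orthogonality $h\perp Q$ plays the crucial role of killing this direction'' is false; $\langle L_+Q,Q\rangle<0$ alone does not imply this (that inner product is negative for \emph{every} power, including the stable subcritical ones). The paper sidesteps this by projecting off the $Q_y$-direction with a coefficient $\alpha=\langle g_1(\cdot+y),\Delta Q\rangle/\langle Q,\Delta Q\rangle$ chosen so that the remainder $h_1$ is orthogonal to $\Delta Q$ (the constraint in \cite[Prop.~2.7]{DR}), not to $Q$ -- and crucially $\alpha$ comes out comparable to $\delta$, not $\delta^2$ as your coefficient $a=-\|g\|_{L^2}^2/(2\|Q\|_{L^2}^2)$ does. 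Your proposed fallback (control the residual projection onto the bad direction via the algebraic identity for $\delta$) is sound in spirit, but once you set it up you will find yourself re-deriving the paper's decomposition with $\alpha\sim\delta$: the entire content of the lemma is hiding in the observation that the non-coercive component of $g_1$ is of size $\delta$ rather than $o(\delta)$, and you must treat the resulting cross terms $\alpha\langle L_+Q,h_1\rangle$ using the mass constraint, exactly as the paper does.
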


\begin{proof}  As described above, the general strategy is to combine the mass and energy into a Lyapunov functional, which leads to the consideration of the bilinear form $B(\cdot,\cdot)$.  To this end, let us first provide the details leading to the identity \eqref{Lyapunov}.  Using the gauge invariance and translation invariance of $E_0(\cdot)$, we first write
\begin{align*}
E_0(u)-E_0(Q)& = \int\tfrac 12|\nabla g|^2 + \nabla g_1\cdot\nabla Q(\cdot-y)-g_1 [Q(\cdot-y)]^3\,dx \\
& \quad -\int \tfrac32 g_1^2 [Q(\cdot-y)]^2 +\tfrac12 g_2^2[Q(\cdot-y)]^2\,dx \\
& \quad - \int \tfrac14 |g|^4 + Q(\cdot-y)[g_1^3+g_1 g_2^2]\,dx.
\end{align*}
Similarly,
\[
M(u)-M(Q) = \int \tfrac12 |g|^2 + g_1 Q(\cdot-y)\,dx. 
\]
As integration by parts and \eqref{elliptic} (the equation for $Q$) yield
\[
\int g_1 Q(\cdot-y) + \nabla g_1 \cdot\nabla Q(\cdot-y) - g_1[Q(\cdot-y)]^3\,dx = 0,  
\]
we may use \eqref{MMEE} (equality of mass/energy) and \eqref{nq-small} to obtain the identity \eqref{Lyapunov}, which we reproduce here: 
\begin{equation}\label{Lyapunov2}
0=B(g(\cdot+y),g(\cdot+y)) + \tfrac12\int V(x)|u|^2\,dx + \mathcal{O}(\|g\|_{H^1}^3).
\end{equation}

In light of the discussion preceding Lemma~\ref{L:modulation-orthogonality}, we cannot expect coercivity for the quadratic term in $g$.  We can, however, further decompose $g$ into a piece belonging to the positive subspace of $B$ and a remainder term.  For this, we recall \cite[Proposition~2.7]{DR}, which yields coercivity for all $h=h_1+ih_2\in H^1$ satisfying $h_1\perp \ker\{L_+\}$, $h_2\perp \ker\{L_-\}$, and the additional orthogonality condition $h_1\perp\Delta Q$ (see \eqref{Bgg} above for the definition of $L_\pm$).  

In particular, if we define
\begin{equation}\label{def:h}
g=\alpha Q(\cdot-y) + h, \qtq{where} \alpha=\frac{\langle g_1(\cdot+y),\Delta Q\rangle}{\langle Q,\Delta Q\rangle}\in\R,
\end{equation}
we may observe that $h_1(\cdot+y)$ is still orthogonal to $\ker\{L_+\}$ (since $\langle Q,\nabla Q\rangle =0$) but additionally satisfies $h_1(\cdot+y)\perp \Delta Q$.  That is, $h(\cdot+y)$ belongs to the positive subspace of $B$.  

Our immediate goal will be to obtain estimates for the term $h$ and the coefficient $\alpha$.  We begin with the simple observation that
\begin{equation}\label{alpha-is-small}
|\alpha|\lesssim \|g\|_{L^2}\ll 1. 
\end{equation}

Next, we observe that by construction, we have
\[
B(h(\cdot+y),h(\cdot+y))\gtrsim \|h(\cdot+y)\|_{H^1}^2 = \|h\|_{H^1}^2,
\]
and hence expanding $B(g(\cdot+y),g(\cdot+y))$ in \eqref{Lyapunov2} yields
\begin{equation}\label{Lyapunov3}
\|h\|_{H^1}^2 + \int V(x)|u|^2\,dx  \lesssim \alpha^2+  |\alpha\langle L_+ Q,h_1(\cdot+y)\rangle|+ \mathcal{O}(\|g\|_{H^1}^3).
\end{equation}
Using $h_1\perp \Delta Q$, we find that 
\[
-2\langle Q^3,h_1(\cdot+y)\rangle = \langle L_+ Q,h_1(\cdot+y)\rangle = \langle Q,h_1(\cdot+y)\rangle-3\langle Q^3,h_1(\cdot+y)\rangle, 
\]
so that
\[
\langle L_+ Q,h(\cdot+y)\rangle = \langle Q,h_1(\cdot+y)\rangle= \langle Q(\cdot-y),h_1\rangle. 
\]
Thus, recalling \eqref{def:h} and \eqref{alpha-is-small}, we may continue from \eqref{Lyapunov3} to obtain
\begin{equation}\label{Lyapunov4}
\|h\|_{H^1}^2 + \int V(x)|u|^2\,dx \lesssim \alpha^2 +  |\alpha\langle Q(\cdot-y),h_1\rangle| + \mathcal{O}\bigl(\|h\|_{H^1}^3\bigr).
\end{equation}

At this point, we need to estimate the inner product appearing in \eqref{Lyapunov4}.  We can do so by exploiting the mass constraint.  In particular, recalling
\begin{equation}\label{masses-equal-expand}
M(Q)=M(u)=M(Q(\cdot-y)+g)=M([1+\alpha]Q(\cdot-y)+h),
\end{equation}
we derive that
\begin{equation}\label{mass-constraint-identity}
\alpha^2\|Q\|_{L^2}^2 + 2\alpha\|Q\|_{L^2}^2 + 2\langle Q(\cdot-y),h_1\rangle + \|h\|_{L^2}^2=0,
\end{equation}
which (recalling \eqref{alpha-is-small}) yields the estimate
\[
|\langle Q(\cdot-y,h\rangle| \lesssim |\alpha| + \|h\|_{L^2}^2. 
\]
Inserting this into \eqref{Lyapunov4} and applying Young's inquality yields
\[
\|h\|_{H^1}^2 + \int V(x)|u|^2\,dx \lesssim \alpha^2 + \mathcal{O}(\|h\|_{H^1}^3),
\]
so that
\begin{equation}\label{s.p.1}
\|h\|_{H^1}^2 + \int V(x)|u|^2\,dx \lesssim \alpha^2. 
\end{equation}

Returning to \eqref{mass-constraint-identity} and recalling \eqref{alpha-is-small}, we can also now observe that
\begin{equation}\label{s.p.2}
|\alpha| \lesssim \|h\|_{H^1}. 
\end{equation}
Combining \eqref{def:h}, \eqref{alpha-is-small}, \eqref{s.p.1}, and \eqref{s.p.2}, we therefore obtain
\[
\|g\|_{H^1} \sim |\alpha|\sim \|h\|_{H^1} \qtq{and}\int V(x)|u|^2\,dx \lesssim \alpha^2. 
\]

In light of the above estimates, it remains only to show that $|\alpha|\sim \delta$. For this, we recall the orthogonality condition $\langle \nabla Q(\cdot-y),\nabla h_1\rangle =0$ and write
\begin{align*}
\delta(t) & = \int |\nabla Q|^2\,dx - \int V(x)|u|^2 - \int (1+\alpha)^2|\nabla Q(\cdot-y)|^2 + |\nabla h|^2 \,dx \\
& = -2\alpha \int |\nabla Q|^2 - \alpha^2 \int |\nabla Q|^2 - \int V(x)|u|^2\,dx - \int |\nabla h|^2\,dx \\
& = -2\alpha \int |\nabla Q|^2\,dx + \mathcal{O}(\alpha^2),
\end{align*}
which yields the result. \end{proof}

We turn to the estimate on $y(t)$ itself. 

\begin{lemma}[Bounds, part II]\label{L:boundsII} If $\delta_0=\delta_0(V)$ is sufficiently small, then
\[
\frac{e^{-2|y(t)|}}{|y(t)|^2}\lesssim\delta(t) \qtq{for all}t\in I_0. 
\]
\end{lemma}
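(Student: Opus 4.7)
The plan is to sandwich the quantity $\int V(x)|Q(x-y(t))|^2\,dx$ between $\delta(t)^2$ from above---via Lemma~\ref{L:boundsI}---and a constant times $e^{-2|y(t)|}/|y(t)|^2$ from below, using the exponential decay of the ground state $Q$ together with the positivity of $V$ on a fixed ball.

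For the upper bound, substitute $u(t)=e^{i\theta(t)}[Q(\cdot-y(t))+g(t)]$ and apply the triangle inequality in the weighted norm $L^2(V\,dx)$:
\[
\Bigl(\int V|Q(\cdot-y(t))|^2\,dx\Bigr)^{1/2}\le\Bigl(\int V|u(t)|^2\,dx\Bigr)^{1/2}+\Bigl(\int V|g(t)|^2\,dx\Bigr)^{1/2}.
\]
The first summand is $\lesssim\delta(t)$ by Lemma~\ref{L:boundsI}. For the second, Hardy's inequality in case \eqref{V3}, or H\"older together with $V\in L^{3/2}$ and Sobolev embedding in case \eqref{V1}--\eqref{V2}, yields $\int V|g|^2\lesssim\|g\|_{H^1}^2\lesssim\delta(t)^2$, again by Lemma~\ref{L:boundsI}. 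Hence $\int V|Q(\cdot-y(t))|^2\,dx\lesssim\delta(t)^2$.

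For the lower bound, fix a ball $B_r(x_0)\subset\R^3$ on which $\kappa:=\int_{B_r(x_0)}V(x)\,dx>0$; such a ball exists since $V$ is nonnegative and nontrivial. For $|y|\ge R_0:=2(|x_0|+r)$ and $x\in B_r(x_0)$ we have $|x-y|\le 2|y|$, so the classical pointwise bound $Q(z)\gtrsim e^{-|z|}/(1+|z|)$---a standard consequence of the radial ODE for $Q$ and the positivity of $Q$---gives $|Q(x-y)|^2\gtrsim e^{-2|y|}/|y|^2$ on $B_r(x_0)$. Integrating yields $\int V|Q(\cdot-y)|^2\,dx\gtrsim\kappa\,e^{-2|y|}/|y|^2$.

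To apply the lower bound we must ensure $|y(t)|\ge R_0$. The map $y\mapsto\int V|Q(\cdot-y)|^2\,dx$ is continuous and strictly positive on $\R^3$, so $\sigma:=\inf_{|y|\le R_0}\int V|Q(\cdot-y)|^2\,dx>0$; combined with the upper bound, this forces $|y(t)|>R_0$ once $\delta_0=\delta_0(V)$ is chosen smaller than a constant multiple of $\sqrt{\sigma}$. Plugging into the lower bound gives $e^{-2|y(t)|}/|y(t)|^2\lesssim\delta(t)^2\le\delta(t)$, as required. The main technical input is the classical exponential pointwise lower bound on the ground state at infinity; this is the only place we use a fine property of $Q$ beyond the elliptic equation \eqref{elliptic} that it satisfies.
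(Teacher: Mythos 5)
Your sandwich argument follows the same basic strategy as the paper's: control a weighted integral of $Q(\cdot-y)$ from above by a power of $\delta$ and from below by a multiple of $e^{-2|y|}/|y|^2$ via the exponential decay of the ground state. The details differ a little: the paper works with the unweighted integral $\int_S|Q(\cdot-y)|^2$ over a fixed set $S$ on which $V\geq b>0$, expanding $|u|^2$ through the modulation decomposition, whereas you work directly with $\int V|Q(\cdot-y)|^2$ via a weighted triangle inequality. Both are fine, and your upper bound is in fact a bit sharper ($\delta^2$ versus the $\delta$ recorded in the paper). Your continuity-plus-compactness argument for forcing $|y(t)|>R_0$ is also a legitimate substitute for the paper's appeal to Lemma~\ref{L:mod-nq}.

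There is, however, a computational slip in your lower bound that should be fixed. From $|x-y|\leq 2|y|$ and $Q(z)\gtrsim e^{-|z|}/(1+|z|)$ one can only conclude $Q(x-y)\gtrsim e^{-2|y|}/|y|$, hence $|Q(x-y)|^2\gtrsim e^{-4|y|}/|y|^2$, not $e^{-2|y|}/|y|^2$ as you wrote. To obtain the sharper exponent you should instead exploit that $|x|\leq|x_0|+r$ is a \emph{fixed} constant for $x\in B_r(x_0)$, so that $|x-y|\leq|y|+(|x_0|+r)$ and therefore $e^{-|x-y|}\geq e^{-(|x_0|+r)}e^{-|y|}$; this yields $|Q(x-y)|^2\gtrsim e^{-2|y|}/|y|^2$ with an absolute constant, exactly as in the paper (which uses $|x-y|\leq|y|+R$). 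As it happens, your stronger $\delta^2$ upper bound does rescue the argument anyway: from $e^{-4|y|}/|y|^2\lesssim\delta^2$ one takes square roots to get $e^{-2|y|}/|y|\lesssim\delta$, and dividing by $|y|\geq 1$ gives the claim. But as written the pointwise step is unjustified and should be corrected.
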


\begin{proof} We rely on many of the estimates given in the previous lemma; in particular, we recall the quantities $\alpha$ and $h$ given in \eqref{def:h}.

First observe that given a non-zero potential $V$ satisfying \eqref{V1}--\eqref{V2} or \eqref{V3}, we may find $b>0$, $c>0$, and $R\geq 1$ such that
\[
|S|\geq c,\qtq{where}S=S_{b,R}=\{x:|x|\leq R\qtq{and}|V(x)|\geq b\}.
\]
Thus, using \eqref{bounds-part1}, we may write
\[
\int_S |u(t,x)|^2\,dx \leq \tfrac{1}{b}\int V(x)|u(t,x)|^2\,dx \lesssim \tfrac{1}{b}[\delta(t)]^2 \lesssim \delta(t)
\]
for $\delta_0=\delta_0(V)$ small enough.  Next, we expand $u$ (as in \eqref{masses-equal-expand}) to write 
\[
|u|^2 = |Q(\cdot-y)|^2+(\alpha^2+2\alpha)|Q(\cdot-y)|^2+|h|^2+2Q(\cdot-y)h_1. 
\]
By the estimates obtained in the previous lemma, this leads to
\begin{equation}\label{555at}
\int_S |Q(x-y(t))|^2\,dx \lesssim\delta(t). 
\end{equation}

Now observe from Lemma~\ref{L:mod-nq} that for $\delta_0$ chosen possibly even smaller, we may guarantee that 
\[
\delta(t)<\delta_0\implies |y(t)|\geq 2R.
\]
This guarantees that $|x-y(t)|\geq R\geq 1$ for all $y\in I_0$. Observing also that $|x-y(t)|\leq |y(t)|+R$, we may continue from \eqref{555at} and use
\[
|Q(x)|\gtrsim |x|^{-1}e^{-|x|}\qtq{for} |x|\geq 1
\]
(see \cite{Cazenave}) to obtain
\[
ce^{2R} |y(t)|^{-2} e^{-2|y(t)|} \lesssim\int_S |Q(x-y(t))|^2\,dx \lesssim \delta(t).
\]
\end{proof}

To complete the proof of Proposition~\ref{P:modulation2}, it remains only to establish the desired estimate for $|\dot y|$. The basic strategy is to differentiate the orthogonality conditions. 

\begin{lemma}[Bounds, part III]\label{L:boundsIII} Let $(\theta(t),y(t))$ be as in Lemma~\ref{L:modulation-orthogonality} and $g(t)$ be as in \eqref{g-again}. Then
\[
|\dot y(t)| \lesssim \delta(t)\qtq{for}t\in I_0.
\]
\end{lemma}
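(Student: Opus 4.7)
The plan is to derive a linear system for $(\dot\theta,\dot y)\in\R\times\R^3$ by differentiating the four orthogonality conditions \eqref{orthogonality-g} in time, and then to invert it.

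First I would compute the equation satisfied by $g(t)$. Writing $P:=Q(\cdot-y(t))$ and using $-\Delta Q+Q-Q^3=0$, the NLS equation $i\partial_t u=(-\Delta+V)u-|u|^2u$ applied to $u=e^{i\theta}(P+g)$ splits, after setting $g=g_1+ig_2$ and using the identity $\Delta P+P^3=P$, into two real equations of the schematic form
\begin{align*}
\partial_t g_1 &= \dot y\cdot\nabla P + (-\Delta-P^2)g_2 + Vg_2 + \dot\theta\, g_2 + N_1(g), \\
\partial_t g_2 &= P(1-\dot\theta)-VP + (\Delta+3P^2)g_1 - Vg_1 - \dot\theta\, g_1 + N_2(g),
\end{align*}
where $N_j(g)=O(|g|^2)$. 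The key feature is that $\dot y$ enters only through $\dot y\cdot\nabla P$ in the $g_1$-equation, while $\dot\theta$ enters multiplicatively.

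Second, I would differentiate $\langle P,g_2\rangle=0$ and $\langle\partial_j P,g_1\rangle=0$ (for $j=1,2,3$) in $t$, using $\partial_t P=-\dot y\cdot\nabla P$ and substituting the equations above. This yields a $4\times 4$ linear system
\[
A(t)\begin{pmatrix}\dot\theta(t)\\ \dot y(t)\end{pmatrix}=\vec b(t).
\]
Using the radiality of $Q$ (so that $\langle\partial_j Q,\partial_k Q\rangle=\tfrac13\|\nabla Q\|_{L^2}^2\delta_{jk}$), the facts $\tilde L_+\partial_j P=0$ and $\tilde L_-\partial_j P=2P^2\partial_j P$, and the smallness $\|g\|_{H^1}\lesssim\delta\ll 1$ from Lemma~\ref{L:boundsI}, the matrix $A(t)$ is a perturbation of the block-diagonal matrix $\mathrm{diag}\bigl(-\|Q\|_{L^2}^2,\tfrac13\|\nabla Q\|_{L^2}^2 I_3\bigr)$ by off-diagonal entries of size $\lesssim\|g\|_{L^2}\lesssim\delta$. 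Hence $A(t)$ is boundedly invertible for $\delta_0$ small.

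Third, I would bound $\vec b(t)$. Its first component contains the leading term $\|P\|_{L^2}^2=\|Q\|_{L^2}^2$ (responsible for forcing $\dot\theta\approx 1$), plus correction terms; the remaining three components consist only of correction terms. All corrections are of three types: (i) linear functionals $\langle f,g_j\rangle$ for fixed $f$ (such as $\langle P,g_1\rangle$ or $\langle P^2\partial_j P,g_2\rangle$), bounded by $\|g\|_{L^2}\lesssim\delta$; (ii) potential cross-terms such as $\langle VP,P\rangle$, $\langle V\partial_j P,g_2\rangle$, and $\langle VP,g_1\rangle$, handled via Cauchy--Schwarz together with the bounds $\int V|u|^2\lesssim\delta^2$ from Lemma~\ref{L:boundsI} and $\int V|g|^2\lesssim\|g\|_{H^1}^2\lesssim\delta^2$ coming from the Hardy/Sobolev embedding $H^1\hookrightarrow L^2(V\,dx)$ under \eqref{V1} or \eqref{V3}; and (iii) quadratic errors $O(\|g\|_{H^1}^2)=O(\delta^2)$. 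Inverting $A(t)$ then yields $|\dot\theta(t)-1|+|\dot y(t)|\lesssim\delta(t)$, which gives the claim.

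The main obstacle is bookkeeping: one must carefully track which inner products are small (of order $\delta$) versus which contribute to the main diagonal of $A$, and in particular verify that the potential integrals are controlled via the $\int V|u|^2$ bound already established, so that no extraneous $V$-dependent quantities appear on the right-hand side. Everything else reduces to the invertibility of an $O(\delta)$-perturbation of a fixed diagonal matrix.
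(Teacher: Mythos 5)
Your proposal is correct and takes essentially the same approach as the paper. The paper derives the equation for $g$, pairs it against $Q(\cdot-y)$ and $\partial_j Q(\cdot-y)$, and uses the orthogonality conditions to simplify the resulting $\partial_t g$ terms; this is exactly differentiating the orthogonality conditions and substituting, and the paper closes the resulting near-triangular system sequentially (first $\dot\theta$, then $\dot y$) rather than by explicitly writing and inverting a $4\times 4$ matrix as you do, but these are the same computation organized differently. Your observation that $\langle\partial_j Q,\partial_k Q\rangle=\tfrac13\|\nabla Q\|_{L^2}^2\delta_{jk}$ is correct (the paper's Lemma~\ref{L:modulation-orthogonality} drops the factor $\tfrac13$, an inconsequential slip), and your treatment of the potential cross-terms via Cauchy--Schwarz, the bound $\int V|u|^2\lesssim\delta^2$, and the Hardy/Sobolev embedding matches what the paper uses implicitly when it invokes \eqref{bounds-part1}.
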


\begin{proof} Using
\[
g(t) = u(t) - e^{i\theta(t)}Q(\cdot-y(t))
\]
and the equations
\[
i\partial_t u + \Delta u - V(x)u + f(u) = 0,\quad -Q+\Delta Q + f(Q) = 0,
\]
with $f(z)=|z|^2 z$, we derive the following evolution equation:
\begin{equation}\label{nls-g}
\begin{aligned}
i&\partial_tg + \Delta g - \dot\theta g \\ 
& \quad+ Q(x-y)-\dot\theta Q(x-y) - \dot y\cdot\nabla Q(x-y) \\
&\quad  - Ve^{-i\theta}u + f(e^{-i\theta}u)-f(Q(\cdot-y)) =0.\end{aligned}
\end{equation}

We will first obtain the estimate
\begin{equation}\label{theta-bd}
|\dot\theta|\lesssim 1 + |\dot y|\,\|g\|_{H^1}. 
\end{equation}
To isolate $\dot\theta$, we multiply \eqref{nls-g} by $Q(\cdot-y)$, integrate, and take the real part.  Using the orthogonality conditions \eqref{orthogonality-g}, the fact that $\langle Q,\nabla Q\rangle =0$, and \eqref{bounds-part1},  we obtain 
\begin{align}
|\dot\theta| \lesssim_Q |\Re\langle i\partial_t g,Q(\cdot-y)\rangle| + |\dot\theta|\,\|g\|_{H^1} + 1 +\mathcal{O}(\|g(t)\|_{H^1}).
\end{align}
As $\|g\|_{H^1}\ll1$, it suffices to estimate the term involving $\partial_t g$.  To this end, we observe that from the orthogonality conditions \eqref{orthogonality-g}, we have
\begin{align*}
\Re\langle i\partial_t g,Q(\cdot-y)\rangle = \Im \langle \partial_t g,Q(\cdot-y)\rangle =  -\dot y\Im\langle g,\nabla Q(\cdot-y)\rangle,
\end{align*}
so that this term is $\mathcal{O}(|\dot y|\,\|g\|_{H^1})$. 

We turn to the estimate of $\dot y$.  To isolate a component $\dot y_j$, we multiply \eqref{nls-g} by $\partial_j Q(\cdot-y)$ and take the imaginary part. Recalling $\langle \partial_j Q,\partial_k Q\rangle=0$ for $j\neq k$, integrating by parts in the $\Delta g$ term, and using \eqref{bounds-part1} and \eqref{theta-bd}, we obtain 
\begin{align*}
|\dot y_j| & \lesssim |\Im\langle i\partial_t g,\partial_j Q(\cdot-y)\rangle| + (1+|\dot\theta|)\|g\|_{H^1}\\
& \lesssim  |\Im\langle i\partial_t g,\partial_j Q(\cdot-y)\rangle| + \|g\|_{H^1} + |\dot y|\,\|g\|_{H^1}^2. 
\end{align*}
It therefore remains to estimate the term involving $\partial_t g$, for which we again rely on the orthogonality conditions \eqref{orthogonality-g}.  We write
\[
\Im\langle i\partial_t g,\partial_j Q(\cdot-y)\rangle = \Re\langle \partial_t g,\partial_j Q(\cdot-y)\rangle = -\dot y_k\Re\langle g,\partial_{jk}Q(\cdot-y)\rangle,
\]
where the repeated index $k$ is summed. In particular, this term is $\mathcal{O}(|\dot y|\,\|g\|_{H^1}),$ and hence continuing from above we may derive the desired estimate
\[
|\dot y| \lesssim \|g\|_{H^1}. 
\]
\end{proof}

\begin{proof}[Proof of Proposition~\ref{P:modulation2}] Combining Lemmas~\ref{L:modulation-orthogonality}, \ref{L:boundsI}, \ref{L:boundsII} and \ref{L:boundsIII}, we immediately obtain the desired decomposition and bounds. 
\end{proof}




\begin{thebibliography}{100}
%
%
%
%
%
%
%
%
%
\bibitem{Cazenave} T. Cazenave, \emph{Semilinear Schr\"odinger equations.} Courant Lecture Notes in Mathematics, \textbf{10.} New York University, Courant Institute of Mathematical Sciences, New York; American Mathematical Society, Providence, RI, 2003. xiv+323 pp. 
%
%
%
%
%
%
%
%
%
%

\bibitem{DR} T. Duyckaerts and S. Roudenko, \emph{Threshold solutions for the focusing 3D cubic Schr\"odinger equation.} Rev. Mat. Iberoam. \textbf{26} (2010), no. 1, 1--56. 

%
\bibitem{DHR} T. Duyckaerts, J. Holmer, and S. Roudenko, \emph{Scattering for the non-radial $3D$ cubic nonlinear Schr\"odinger equation,} Math. Res. Lett. \textbf{15} (2008), no. 6, 1233--1250.
%
\bibitem{DLR} T. Duyckaerts, O. Landoulsi, and S. Roudenko, \emph{Threshold solutions in the focusing 3D cubic NLS equation outside a strictly convex obstacle}. Preprint {\tt arXiv:2010.07724}.

\bibitem{DM} T. Duyckaerts and F. Merle, \emph{Dynamic of threshold solutions for energy-critical NLS.} Geom. Funct. Anal. \textbf{18} (2009), no. 6, 1787--1840. 

%
%
%

\bibitem{Foschi} D. Foschi, \emph{Inhomogeneous Strichartz estimates.} J. Hyperbolic Differ. Equ. \textbf{2} (2005), no. 1, 1--24.
%
%
%
%
\bibitem{HR} J. Holmer and S. Roudenko, \emph{A sharp condition for scattering of the radial $3D$ cubic nonlinear Schr\"odinger equation.} Comm. Math. Phys. \textbf{282} (2008), no. 2, 435--467.  
%
\bibitem{Hong} Y. Hong, \emph{Scattering for a nonlinear Schr\"odinger equation with a potential.} Commun. Pure Appl. Anal. \textbf{15} (2016), no. 5, 1571--1601.
%
%
\bibitem{KM-cubic} C. Kenig and F Merle, \emph{Scattering for $\dot{H}^{1/2}$ bounded solutions to the cubic, defocusing NLS in 3 dimensions.} Trans. Amer. Math. Soc. \textbf{362} (2010), no. 4, 1937--1962.
%
%
%
%
\bibitem{KMVZ} R. Killip, J. Murphy, M. Visan, and J. Zheng, \emph{The focusing cubic NLS with inverse-square potential in three space dimensions.} Differential Integral Equations \textbf{30} (2017), no. 3-4, 161--206.  
%
\bibitem{KMVZZ} R. Killip, C. Miao, M. Visan, J. Zhang, and J. Zheng, \emph{The energy-critical NLS with inverse-square potential}. Discrete Contin. Dyn. Syst. \textbf{37} (2017), no. 7, 3831--3866.
%

\bibitem{KMVZZ2} R. Killip, C. Miao, M. Visan, J. Zhang, and J. Zheng, \emph{Sobolev spaces adapted to the Schr\"odinger operator with inverse-square potential.} Math. Z. \textbf{288} (2018), no. 3-4, 1273--1298.

\bibitem{KVZ} R. Killip, M. Visan, and X. Zhang, \emph{The focusing cubic NLS on exterior domains in three dimensions.} Appl. Math. Res. Express. AMRX 2016, no. 1, 146--180.
%
\bibitem{LMM} J. Lu, C. Miao, and J. Murphy, \emph{Scattering in $H^1$ for the intercritical NLS with an inverse-square potential.} J. Differential Equations \textbf{264} (2018), no. 5, 3174--3211.
%
%

%
%

\bibitem{Weinstein} M. Weinstein, \emph{Lyapunov stability of ground states of nonlinear dispersive evolution equations.} Comm. Pure Appl. Math. \textbf{39} (1986), no. 1, 51--67.

\bibitem{YZZ} K. Yang, C. Zeng, and X. Zhang, \emph{Dynamics of threshold solutions for energy critical NLS with inverse square potential.} Preprint {\tt arXiv:2006.04321.} 

\end{thebibliography}
\end{document}